\tikzset{
  PRTnode/.style={draw=none,circle,fill,line width=0,inner sep=-1pt,minimum width=.5ex},
  PRTmark/.style={draw,ellipse,fill=none,thick,inner sep=1ex},
  PRT/.style={grow'=up,thick,level distance=3ex,sibling distance=1em,every node/.style={PRTnode}},
}
\setlist[enumerate,1]{label=(\emph{\roman*})}
\newlist{conditions}{enumerate}{1}
\setlist[conditions]{label=(\emph{\roman*})}
\crefname{conditionsi}{condition}{conditions}
\theoremstyle{plain}
\newtheorem*{pp:KosCx:acyclic}{Proposition \ref{pp:KosCx:acyclic}}
\newtheorem*{pp:ELie3:HTT}{Proposition \ref{pp:ELie3:HTT}}
\newtheorem*{lm:SS:Phi}{Lemma \ref{lm:SS:Phi}}
\def\@endtheorem{\endtrivlist}%
\newcommand{\addresseshere}{%
  \enddoc@text\let\enddoc@text\relax
}
\newcommand{\diff}{\mathrm{d}}
\newcommand{\dEnd}{\partial}
\newcommand{\rSh}{\overline{\mathrm{Sh}}}
\newcommand{\uSh}{\Sh^{-1}}
\newcommand{\ruSh}{\rSh^{-1}}
\newcommand{\VF}{\mathfrak{X}}
\DeclareMathOperator{\linspan}{span}
\newcommand{\nKd}{\diamond}
\newcommand{\LieD}[1][]{{\Lie^\nKd_{#1}}}
\newcommand{\qiso}{\sim}
\newcommand{\NN}{\mathbb{N}}
\newcommand{\ZZ}{\mathbb{Z}}
\newcommand{\opdO}{\mathcal{O}}
\newcommand{\opdP}{\mathcal{P}}
\newcommand{\opdQ}{\mathcal{Q}}
\newcommand{\coopdC}{\mathcal{C}}
\newcommand{\smodI}{\mathrm{I}}
\newcommand{\SSy}[1]{\widebar{#1}}
\newcommand{\dprime}{\prime\prime}
\let\save@mathaccent\mathaccent
\newcommand*\if@single[3]{%
  \setbox0\hbox{${\mathaccent"0362{#1}}^H$}%
  \setbox2\hbox{${\mathaccent"0362{\kern0pt#1}}^H$}%
  \ifdim\ht0=\ht2 #3\else #2\fi
  }
\newcommand*\rel@kern[1]{\kern#1\dimexpr\macc@kerna}
\newcommand*\widebar[1]{\@ifnextchar^{{\wide@bar{#1}{0}}}{\wide@bar{#1}{1}}}
\newcommand*\wide@bar[2]{\if@single{#1}{\wide@bar@{#1}{#2}{1}}{\wide@bar@{#1}{#2}{2}}}
\newcommand*\wide@bar@[3]{%
  \begingroup
  \def\mathaccent##1##2{%
%Enable nesting of accents:
    \let\mathaccent\save@mathaccent
%If there's more than a single symbol, use the first character instead (see below):
    \if#32 \let\macc@nucleus\first@char \fi
%Determine the italic correction:
    \setbox\z@\hbox{$\macc@style{\macc@nucleus}_{}$}%
    \setbox\tw@\hbox{$\macc@style{\macc@nucleus}{}_{}$}%
    \dimen@\wd\tw@
    \advance\dimen@-\wd\z@
%Now \dimen@ is the italic correction of the symbol.
    \divide\dimen@ 3
    \@tempdima\wd\tw@
    \advance\@tempdima-\scriptspace
%Now \@tempdima is the width of the symbol.
    \divide\@tempdima 10
    \advance\dimen@-\@tempdima
%Now \dimen@ = (italic correction / 3) - (Breite / 10)
    \ifdim\dimen@>\z@ \dimen@0pt\fi
%The bar will be shortened in the case \dimen@<0 !
    \rel@kern{0.6}\kern-\dimen@
    \if#31
      \overline{\rel@kern{-0.6}\kern\dimen@\macc@nucleus\rel@kern{0.4}\kern\dimen@}%
      \advance\dimen@0.4\dimexpr\macc@kerna
%Place the combined final kern (-\dimen@) if it is >0 or if a superscript follows:
      \let\final@kern#2%
      \ifdim\dimen@<\z@ \let\final@kern1\fi
      \if\final@kern1 \kern-\dimen@\fi
    \else
      \overline{\rel@kern{-0.6}\kern\dimen@#1}%
    \fi
  }%
  \macc@depth\@ne
  \let\math@bgroup\@empty \let\math@egroup\macc@set@skewchar
  \mathsurround\z@ \frozen@everymath{\mathgroup\macc@group\relax}%
  \macc@set@skewchar\relax
  \let\mathaccentV\macc@nested@a
%The following initialises \macc@kerna and calls \mathaccent:
  \if#31
    \macc@nested@a\relax111{#1}%
  \else
%If the argument consists of more than one symbol, and if the first token is
%a letter, use that letter for the computations:
    \def\gobble@till@marker##1\endmarker{}%
    \futurelet\first@char\gobble@till@marker#1\endmarker
    \ifcat\noexpand\first@char A\else
      \def\first@char{}%
    \fi
    \macc@nested@a\relax111{\first@char}%
  \fi
  \endgroup
}
\newcommand{\bcirc}{\mathrel{\vphantom{|}\mathpalette\do@bcirc\relax}}
\newcommand{\do@bcirc}[2]{%
  \ooalign{%
    $#1\m@th\circ$\cr
    \hidewidth$#1\m@th|$\hidewidth\cr
  }%
}
\newcommand{\pcirc}{{\circ_{(1)}}}
\newcommand{\QAlg}{\opdQ\cat{-Alg}}
\DeclareMathOperator{\Ho}{Ho}
\DeclareMathOperator{\coker}{coker}
\newcommand{\HoQAlg}{\Ho\opdQ\cat{-Alg}}
\newcommand{\hty}{\sim_h}
\newcommand{\LieKgen}[1]{%
  \def\LieKgen@args{{#1}}%
  \LieKgen@
}
\newcommand{\LieKgen@}[1][]{%
  \expandafter\LieKgenerator\LieKgen@args{#1}%
}
\newcommand{\LieDgen}[1]{%
  \def\LieDgen@args{{#1}}%
  \LieDgen@
}
\newcommand{\LieDgen@}[1][]{%
  \expandafter\LieDgenerator\LieDgen@args{#1}%
}
\newcommand{\LeibKstr}[1]{%
  \def\LeibKstr@args{{#1}}%
  \LeibKstr@
}
\newcommand{\LeibKstr@}[1][]{%
  \expandafter\LeibKstructure\LeibKstr@args{#1}%
}
\newcommand{\LieKstr}[1]{%
  \def\LieKstr@args{{#1}}%
  \LieKstr@
}
\newcommand{\LieKstr@}[1][]{%
  \expandafter\LieKstructure\LieKstr@args{#1}%
}
\newcommand{\LieDstr}[1]{%
  \def\LieDstr@args{{#1}}%
  \LieDstr@
}
\newcommand{\LieDstr@}[1][]{%
  \expandafter\LieDstructure\LieDstr@args{#1}%
}
\newcommand{\LeibKtw}{\lambda}
\newcommand{\LieKtw}{\SSy{\lambda}}
\newcommand{\LieDtw}{\lambda}
\newcommand{\LeibKgen}[1]{\LieDgen{#1}}
\newcommand{\LieKgenerator}[2]{\bar{l}_{#1}^{#2}}
\newcommand{\LieDgenerator}[2]{l_{#1}^{#2}}
\newcommand{\LeibKstructure}[2]{\LeibKtw_{#1}^{#2}}
\newcommand{\LieKstructure}[2]{\LieKtw_{#1}^{#2}}
\newcommand{\LieDstructure}[2]{\LieDtw_{#1}^{#2}}
\DeclareRobustCommand{\SkipTocEntry}[9]{}
\begin{document}

  \title{On weak Lie 3-algebras}
  \author{Malte Dehling}

  \address{Mathematisches Institut,
  Georg-August-Universit\"at G\"ottingen,
  Bunsenstra\ss{}e 3-5,
  37073 G\"ottingen, Germany}
  \email{malte.dehling@uni-goettingen.de}
  \urladdr{http://www.uni-math.gwdg.de/mdehling/}

  \thanks{This work was supported by the ANR grant SAT}

  \subjclass[2010]{Primary 18D50; Secondary 18G55}
  \keywords{Homotopical algebra, Operad, Koszul duality, Lie algebra}

  \begin{abstract}
    In this article, we introduce a category of \emph{weak Lie 3-algebras} with suitable \emph{weak morphisms}.  The
    definition is based on the construction of a partial resolution over $\ZZ$ of the Koszul dual cooperad $\LieK$ of
    the $\Lie$ operad, with free symmetric group action.  Weak Lie 3-algebras and their morphisms are then defined via
    the usual operadic approach---as solutions to Maurer--Cartan equations.  As 2-term truncations we recover
    Roytenberg's category of \emph{weak Lie 2-algebras}.  We prove a version of the \emph{homotopy transfer theorem}
    for weak Lie 3-algebras.  A right homotopy inverse to the resolution is constructed and leads to a
    \emph{skew-symmetrization construction} from weak Lie 3-algebras to 3-term $\Linf$-algebras.  Finally, we give two
    applications: the first is an extension of a result of Rogers comparing algebraic structures related to
    $n$-plectic manifolds; the second is the construction of a weak Lie 3-algebra associated to an CLWX 2-algebroid
    leading to a new proof of a result of Liu--Sheng.
  \end{abstract}

  \maketitle

  \thispagestyle{plain}

  \setcounter{tocdepth}{1}
  \tableofcontents

  %%% BEGIN Content

  \section*{Introduction}
    The study of algebraic structures up to homotopy combines the fields of algebra and homotopy theory.  The objects
    of study are types of algebras and their invariance properties with respect to certain homotopy operations on
    their underlying spaces.  In our setting, the underlying base category of spaces is a symmetric monoidal model
    category $\cat{C}$, and the algebraic structures considered are algebras over an operad $\opdP$ in $\cat{C}$.
    Operads model many input, single output operations and their composition and are therefore suitable to describe
    many of the classical types of algebras, e.g.\@ associative, commutative and Lie algebras (see
    \cite{loday2012algebraic,MSS}.)

    In general, algebras over an operad are rigid structures, meaning they do not play nice with homotopy operations
    on the underlying space.  However, for some operads $\opdQ$ their algebras do have good homotopy properties.  This
    is the case in particular for those operads $\opdQ$ that are cofibrant in the model structure on operads in
    $\cat{C}$ (see \cite{Hinich97,Hinich03,BergerMoerdijk03}.)  This model structure exists under some assumptions on
    the underlying model category $\cat{C}$ and some restrictions on the operads, see op.\@ cit.\@ for details.  For
    such a cofibrant operad $\opdQ$, we can also equip its category $\QAlg$ of $\opdQ$-algebras with a model structure
    and in this category a version of the Boardman--Vogt homotopy invariance property holds: given a homotopy
    equivalence of cofibrant-fibrant spaces $X$, $Y$ in $\cat{C}$, a structure of $\opdQ$-algebra on either induces a
    homotopy equivalent $\opdQ$-algebra structure on the other \cite[Theorem~3.5]{BergerMoerdijk03}.

    We will often be interested in the homotopy category of $\opdQ$-algebras, which is defined as the localization
    $\HoQAlg = \QAlg[W^{-1}]$ with respect to the class $W$ of weak equivalences.  An isomorphism $A \to A'$ in the
    homotopy category is a zigzag of weak equivalences in $\QAlg$,
    \begin{equation*}
      \begin{tikzcd}
        A \ar[r,<-,"\qiso"] & \bullet \ar[r,"\qiso"] & \dotsb \ar[r,<-,"\qiso"] & \bullet \ar[r,"\qiso"] & A' .
      \end{tikzcd}
    \end{equation*}
    Given cofibrant resp.\@ fibrant replacement functors $Q$ resp.\@ $R$ on $\QAlg$, it is a consequence of a general
    result on model categories (see e.g.\@ \cite[Theorem~1.2.10]{Hovey99}) that
    \begin{equation*}
      \Hom_{\HoQAlg}(A,A') \iso \Hom_{\QAlg}(QA,RA')/\hty ,
    \end{equation*}
    where the relation $\hty$ is homotopy of morphisms.  We call a morphism $QA \to RA'$ of $\opdQ$-algebras a
    homotopy morphism from $A$ to $A'$ and denote it by $A \hto A'$.

    % TODO: Define S-Mods and explain S-Mods <-> Opds adjunction.
    In this paper, we work in the differential graded framework.  The category $\cat{C}$ will be the category of
    unbounded chain complexes equipped with the standard projective model structure, i.e.\@ weak equivalences are
    quasi-isomorphisms, fibrations are degree-wise epimorphisms, and cofibrations are determined by the left lifting
    property w.r.t.\@ acyclic fibrations.  This model structure can be transferred to give model structures on
    $\Sy$-modules, operads, and algebras over an operad, by defining the weak equivalences resp.\@ fibrations to be
    those maps that are weak equivalences resp.\@ fibrations on all underlying chain complexes.  The cofibrations are
    then again determined by their lifting property.  Note that the lifting property defining cofibrations depends on
    the structures, not just the underlying chain complexes.  In particular, while it is clear from the above
    definition that all operads in $\cat{C}$ are fibrant, cofibrancy is an entirely different question.

    On the category of operads in $\cat{C}$, we have functorial cofibrant resolutions given by the counit
    $\COBAR\BAR\opdP \qito \opdP$ of the cobar-bar adjunction for any operad $\opdP$ that is already $\Sy$-cofibrant,
    i.e.\@ cofibrant in the underlying category of $\Sy$-modules.  When working with chain complexes over a field
    $\Fk$ of characteristic $0$, this $\Sy$-cofibrancy condition is always satisfied.  When $\Fk$ is an arbitrary
    unital commutative ring, one way to obtain an $\Sy$-cofibrant resolution for any operad $\opdP$ is to take the
    arity-wise tensor product with the algebraic Barratt--Eccles operad $\BE$ (see e.g.\@ \cite{BergerFresse04}.) In
    this way, one obtains a cofibrant resolution $\COBAR\BAR(\opdP\tensor\BE) \qito \opdP\tensor\BE \qito \opdP$ of
    any operad.  We give a more conceptual understanding of this resolution in joint work with Bruno Vallette
    \cite{dehling-vallette_symmetric_2015} where we model non-unital operads as algebras over a particular colored
    operad $\opdO$.  A choice of Koszul presentation for this operad $\opdO$, leads to a new type of cobar-bar
    resolution resolving simultaneously operadic composition as well as the symmetric group actions.  We denote the
    new cobar and bar constructions by $\widetilde{\COBAR}$ resp.\@ $\widetilde{\BAR}$.  It is then shown in loc.\@
    cit.\@ that there is an isomorphism of operads
    $\smodI \oplus \widetilde{\COBAR}\widetilde{\BAR}\red{\opdP} \iso \COBAR\BAR(\opdP\tensor\BE)$, where
    $\red{\opdP}$ denotes the operadic augmentation ideal, $\opdP = \red{\opdP} \dsum \smodI$.

    For the classical types of algebras $\opd{Ass}$, $\opd{Com}$, and $\opd{Lie}$, there are well known homotopy
    invariant analogues.  For associative algebras, these are the $\Ainf$-algebras introduced by Stasheff
    \cite{Stasheff63}.  For commutative algebras, there exist the notions of $\Cinf$-algebras, introduced by
    Kadeishvili \cite{Kadeishvili88}, and of $\Einf$-algebras, going back to May \cite{May72} and Boardman--Vogt
    \cite{BoardmanVogt73}.  For Lie algebras, the homotopy invariant notion of $\Linf$-algebras was introduced by
    Hinich--Schechtman \cite{HinichSchechtman93}, see also \cite{drinfeld1988letter}.  The operad $\Ainf$ is cofibrant
    over any unital commutative ring $\Fk$ since the operad $\opd{Ass}$ is already $\Sy$-cofibrant.  The operads
    $\Cinf$ and $\Linf$ are only cofibrant over fields $\Fk$ of characteristic $0$.  The notion of $\Einf$-algebras
    describes algebras over any $\Sy$-cofibrant resolution of $\opd{Com}$, e.g.\@ over the Barratt--Eccles operad
    $\BE$.  A cofibrant $\Einf$ operad is then given by the cobar-bar resolution of the Barratt--Eccles operad,
    $\COBAR\BAR\BE \iso \smodI \oplus \widetilde{\COBAR}\widetilde{\BAR}\red{\opd{Com}}$.

    In the case of the $\Ainf$, $\Linf$ and $\Cinf$ operads, these can be obtained as resolutions
    $\opdP_{\infty} = \COBAR\opdP^\ash \qito \opdP$ for some cooperad $\opdP^\ash$ weakly equivalent to $\BAR\opdP$ in
    a Hinich-type model structure on dg cooperads (see \cite{LeGrignou16}.) The cooperad $\opdP^\ash$ is given by a
    presentation dual to a choice of presentation for $\opdP$.  The presentation is called \emph{Koszul}, if in fact
    $\opdP^\ash \into \BAR\opdP$ is a weak equivalence and therefore
    $\COBAR\opdP^\ash \qito \COBAR\BAR\opdP \qito \opdP$ is a quasi-isomorphism.  By this \emph{Koszul duality}
    approach, it is possible to obtain much smaller resolutions for many operads.
    The restriction that $\opdP$ needs to be $\Sy$-cofibrant, however, still holds.

    %%% GOAL & SECTION 1
    Our interest in this article lies in finding a small cofibrant replacement $\ELinf \qito \Lie$ for the $\Lie$
    operad over any unital commutative ring $\Fk$.  Since the operad $\Lie$ is not $\Sy$-cofibrant, we cannot use the
    classical Koszul duality methods.  Within the context of our new cobar-bar adjunction
    $\widetilde{\COBAR}\dashv\widetilde{\BAR}$, however, a Koszul duality approach is not yet available.  We will
    therefore resort to a more ad hoc approach to build our resolution: we start from the usual Koszul dual cooperad
    $\LieK$ and build an $\Sy$-free resolution step by step.  Assume for a moment that we had completed this process,
    i.e.\@ we have an $\Sy$-free resolution $\psi\colon \LieD \qito \LieK$ of dg cooperads.  Since $\LieD$ is
    $\Sy$-free, $\COBAR\LieD$ is then a cofibrant operad.  To show that the composition
    $g_\kappa\circ\COBAR\psi\colon \COBAR\LieD \to \COBAR\LieK \qito \Lie$ forms a cofibrant resolution, we need to
    verify that $\COBAR\psi$ is a quasi-isomorphism or, equivalently, that the twisted composite product
    $\LieD\circ_{\kappa\circ\psi}\Lie$ is acyclic.  In this case, we can apply the standard machinery of algebraic
    operads to obtain a category of $\ELinf$-algebras with homotopy morphisms satisfying a version of the homotopy
    transfer theorem.  We recall the relevant background material in \Cref{S:Prelim}.

    %%% SECTION 2
    While we do not have a complete $\Sy$-free resolution of dg cooperads as described above, in \Cref{S:LieD} we do
    construct such a resolution in low degrees and show that at least truncated versions of the relevant statements
    hold.  As a first step, we introduce an explicit $\Sy$-free resolution $\psi\colon \LieD[3] \to \LieK$ of dg
    $\Sy$-modules in low degrees, i.e.\@ such that $\Hm_r(\psi)$ are isomorphisms for $r \leq 3$.  In the second step,
    we equip $\LieD$ with a decomposition map turning it into a dg cooperad.  Since Leibniz algebras are essentially
    non-symmetric Lie algebras, it makes sense to use their $\Sy$-free Koszul dual cooperad $\LeibK$ as a starting
    point for both steps.  The higher degrees of $\LieD$ can be viewed as a coherent system of higher homotopies for
    the (missing) skew-symmetry.  We extend the decomposition map of $\LeibK$ to the higher degrees in a way that is
    compatible with the differential by solving systems of linear diophantine equations and verify that it is actually
    coassociative.  Finally we prove the following result, which---while of no immediate consequences---is a necessary
    condition if we intend to extend our low degree resolution to a full cofibrant resolution.
    %$\Hm_r\big((\LieD[3]\circ_{\kappa\circ\psi}\Lie)(n)\big) = 0$ in low degrees.
    \begin{pp:KosCx:acyclic}
      The twisted composite product $\LieD[3]\circ_{\kappa\circ\psi}\Lie$ satisfies
      \begin{equation*}
        \Hm_r\big((\LieD[3]\circ_{\kappa\circ\psi}\Lie)(n)\big) = 0 ,
      \end{equation*}
      for all $r \leq 3$ in all arities $n$.
    \end{pp:KosCx:acyclic}

    %%% SECTION 3
    In \Cref{S:ELie3}, we use the resolution $\LieD[3]$ to define \emph{weak Lie 3-algebras} as
    $\COBAR\LieD[3]$-algebras on a 3-term complex and introduce the corresponding notion of \emph{weak morphisms}.
    Since we used $\LeibK$ as a starting point for our resolution, weak Lie 3-algebras and their morphisms consist of
    extra structure on top of Leibniz 3-algebras and morphisms of such.  We make the definitions explicit in terms of
    structure maps and equations.  We proceed to show explicitly that, given a deformation retract of 3-term chain
    complexes, i.e.\@ chain maps $p$ and $i$ and a chain homotopy $h$ as in
    \begin{align}
      % FIXME: Actually set baseline=(?-1-1.base) or something.
      \begin{tikzcd}[ampersand replacement=\&,/tikz/baseline=-0.5ex]
        (L,\diff)
          \arrow[loop left]{l}{h}
          \arrow[transform canvas={yshift=0.5ex}]{r}{p}
        \& (L',\diff')
          \arrow[transform canvas={yshift=-0.5ex}]{l}{i}
      \end{tikzcd} ,
      &&\textrm{such that }
      \begin{cases}
        \id_L - i\circ p = [\diff,h] ,  \\
        \id_{L'} - p\circ i = 0  ,
      \end{cases}
    \end{align}
    the following homotopy transfer property holds.
    \begin{pp:ELie3:HTT}
      Let $(L,\diff,\LieDtw)$ be a weak Lie 3-algebra and let $(L',\diff')$ be a deformation retract of $(L,\diff)$.
      Then $(L',\diff')$ can be equipped with a transferred weak Lie 3-algebra structure in such a way, that the map
      $i$ admits an extension to a weak morphism of weak Lie 3-algebras.
    \end{pp:ELie3:HTT}

    %%% COMMENT: We recover Roytenberg's results.
    In \cite{baez_higher-dimensional_2004} the notion of a \emph{Lie 2-algebra} is introduced using a very different
    approach known as categorification.  It is then shown that the definition is equivalent to that of a 2-term
    $\Linf$-algebra, i.e.\@ a 2-term chain complex with a binary graded skew-symmetric bracket satisfying the Jacobi
    identity up to homotopy.  In \cite{roytenberg_weak_2007}, the definition of a \emph{weak Lie 2-algebra} is
    introduced, again as a categorification of Lie algebras, this time with the skew-symmetry of the Lie bracket
    relaxed up to homotopy in addition to the Jacobi identity.  Truncating the complexes underlying our weak Lie
    3-algebras to 2-term complexes, we recover Roytenberg's definitions of weak Lie 2-algebras and their weak
    morphisms.  Similarly, we recover his homotopy transfer theorem \cite[Theorem~4.1]{roytenberg_weak_2007} for weak
    Lie 2-algebras as a truncation of \Cref{pp:ELie3:HTT}.

    %%% SECTION 4
    By construction of $\LieD[3]$, we have morphisms of dg cooperads $\LeibK[3] \into \LieD[3] \onto \LieK[3]$ and
    therefore functors
    \begin{equation*}
      \begin{tikzcd}
        \cat{Leibniz 3-algebras} %\ar[r,transform canvas={yshift=0.5ex},rightsquigarrow]
        %& \ar[l,transform canvas={yshift=-0.5ex}] \cat{Weak Lie 3-algebras}
        & \ar[l] \cat{Weak Lie 3-algebras}
        & \ar[l] \cat{Lie 3-algebras}
      \end{tikzcd} .
    \end{equation*}
    While (homotopy) Lie algebras are precisely (homotopy) Leibniz algebras with skew-symmetric structure maps, in
    general skew-symmetrizing the bracket(s) of a (homotopy) Leibniz algebra does not give a (homotopy) Lie algebra.
    Operadically speaking, this means that there is no simple non-trivial morphism of cooperads $\LieK \to \LeibK$.
    Using the higher degree terms in $\LieD[3]$ we can, however, construct a morphism
    $\LieD[3]\rightsquigarrow\LeibK[3]$ of dg cooperads up to homotopy.  Precisely, we will prove the following result
    in \Cref{S:SS}.
    \begin{lm:SS:Phi}
      The morphism $\COBAR\psi$ admits a right inverse, i.e.\@ a morphism $\Phi$ of dg operads
      \begin{align*}
        % FIXME: Actually set baseline=(?-1-1.base) or something.
        \begin{tikzcd}[ampersand replacement=\&]
          \COBAR\psi : \COBAR\LieD[3]
            \arrow[transform canvas={yshift=0.5ex}]{r}{}
          \& \COBAR\LieK[3] : \Phi
            \arrow[transform canvas={yshift=-0.5ex}]{l}{}
        \end{tikzcd} ,
        &&\textrm{such that $\COBAR\psi\circ\Phi = \id$.}
      \end{align*}
    \end{lm:SS:Phi}
    This leads to a \emph{skew-symmetrization construction} producing for each weak Lie 3-algebra $(L,\diff,\LieDtw)$
    a (semi-strict) Lie 3-algebra $(L,\diff,\LieKtw)$.  We then introduce an ad hoc definition of skew-symmetrization
    for morphisms of weak Lie 3-algebras and proceed to show that this construction---while \emph{not strictly}
    functorial---is in some sense functorial up to homotopy.
    %  We denote this by $\cat{Weak Lie 3-algebras} \rightsquigarrow \cat{Lie 3-algebras}$.

    %%% SECTION 5
    We end this article with a discussion of applications of our results in higher differential geometry in
    \Cref{S:Appl}.  In \cite{rogers_thesis_2011,baez_hoffnung_rogers_2008} the concept of $n$-plectic manifolds is
    introduced as a higher analogue for symplectic manifolds.  To any $n$-plectic manifold $(M,\omega)$, two algebraic
    structures on an $n$-term truncation of the de Rham complex are associated: an $\Linf$-algebra $\Linf(M,\omega)$
    and a dg Leibniz algebra $\Leib(M,\omega)$ with a certain hidden skew-symmetry.  In the case of 3-plectic
    manifolds, both are examples of weak Lie 3-algebras.  We show that $\Linf(M,\omega)$ and $\Leib(M,\omega)$ are
    isomorphic as such.  The analogous result for 2-plectic manifolds is due to Rogers
    \cite[Appendix~A]{rogers_thesis_2011}.

    In recent work of Liu--Sheng \cite{liu2016qp}, the notion of an \emph{CLWX 2-algebroid} is introduced as a higher
    Courant algebroid, and it is shown that a Lie 3-algebra can be assigned to any CLWX 2-algebroid.  We refine the
    construction to give a weak Lie 3-algebra and show that the Lie 3-algebra constructed in op.\@ cit.\@ is in fact
    the skew-symmetrization of our weak Lie 3-algebra, thereby giving a new proof for \cite[Theorem~3.10]{liu2016qp}.

    \subsection*{Acknowledgements}
      The author would like to thank Chris Rogers and Dmitry Roytenberg for their insightful remarks and Yunhe Sheng
      for his inspiration.  The author would in particular like to thank Chenchang Zhu for many helpful discussions
      and suggestions and Bruno Vallette for his detailed criticism of this article and his hospitality on numerous
      occasions.

  \section{Preliminaries} \label{S:Prelim}
    In this section, we introduce some conventions, fix notation, and recall basic definitions and results of the
    theory of algebraic operads.  For a detailed introduction to the theory of algebraic operads we refer the reader
    to \cite{loday2012algebraic}, upon which this section is heavily based.  This section is organized as follows.  In
    \Cref{S:Prelim:CN}, we introduce some conventions and notation we follow in the remainder of this article. In
    \Cref{S:Prelim:SMod}, we introduce $\Sy$-modules and equip them with the structure of a monoidal category.  In
    \Cref{S:Prelim:Op}, we give the basic definitions of operads and cooperads.  In \Cref{S:Prelim:CoBar}, we define
    the operadic bar and cobar constructions.  In \Cref{S:Prelim:Tw}, we recall the twisting morphism bifunctor.
    \Cref{S:Prelim:Koszul} serves as a quick reminder of Koszul duality for operads.  Finally, \Cref{S:Prelim:htyAlg}
    deals with homotopy algebras and morphisms in general.

    \subsection{Conventions and notation} \label{S:Prelim:CN}
      We denote by $\Fk$ an arbitrary unital commutative ring.  For any computations we will work over the integers
      $\Fk = \ZZ$.  Since $\ZZ$ is the initial object in the category of unital commutative rings, this ensures that
      our results hold over any such ring $\Fk$.  Our chain complexes are $\Z$-graded complexes of $\Fk$-modules.  We
      follow the Koszul sign rule, i.e.\@ whenever symbols $x$, $y$ of homological degree $|x|$ resp.\@ $|y|$ change
      their relative order, a factor $(-1)^{|x||y|}$ is introduced.

      \subsubsection{Suspension}  We denote by $s\Fk$ the chain complex that is $\Fk$ in degree $1$ and zero in all
      other degrees.  For any chain complex $(V,\diff^V)$, we define its \emph{suspension} to be
      $sV\defeq s\Fk\tensor V$ with $\diff^{sV} = 1\tensor\diff^V$.  We denote by $s\colon V \to sV$,
      $v \mapsto sv = s\tensor v$ the \emph{suspension isomorphism}.  \emph{Desuspension} $s^{-1}$ is defined
      similarly.

      \subsubsection{Symmetric group}  We denote by $\Sy_n$ the \emph{symmetric group} on $n$ elements, i.e.\@ the
      group of bijections of the set $\underline{n}=\{1,\dots,n\}$.  We use the notation $\Fk[\Sy_n]$ for the group
      algebra and the (right) \emph{regular representation} of $\Sy_n$.  By $\Fk\cdot\sgn_n$ we denote the
      one-dimensional \emph{signature represenation} of $\Sy_n$, i.e.\@ its underlying module is $\Fk$ and the
      adjacent transpositions $\sigma_i = (i\ i+1)$ act by multiplication with $-1$.  We implicitely extend the
      group representations to representations of the group algebra and write e.g.\@
      $x^{-\sigma+\tau} = -x^\sigma + x^\tau$.

      \subsubsection{Shuffle permutations}  Let $n_1,\dotsc,n_m$ be natural numbers,
      s.t.\@ $n=n_1+\dotsb+n_m$.  We call $\sigma\in\Sy_n$ an \emph{$(n_1,\dotsc,n_m)$-shuffle}, if
      $\sigma(i)<\sigma(i+1)$ for all $1\leq i < n$ except when $i = n_1 + \dotsb + n_j$ for some $1 \leq j < m$.  We
      denote by $\Sh(n_1,\dotsc,n_m) \subset \Sy_n$ the subset of these $(n_1,\dotsc,n_m)$-shuffles.  The shuffles
      $\Sh(n_1,\dotsc,n_m)$ form a set of representatives for the cosets
      $\Sy_n/(\Sy_{n_1}\times\dotsb\times\Sy_{n_m})$.

      We call an $(n_1,\dotsc,n_m)$-shuffle $\sigma$ \emph{reduced}, if $\sigma(n_j) < \sigma(n_{j+1})$ for all
      $1 \leq j < m$.  The set of these reduced shuffles is denoted as $\rSh(n_1,\dotsc,n_m)$.  The inverse of a
      shuffle is called an \emph{unshuffle} and the set of these is denoted by $\Sh^{-1}(n_1,\dotsc,n_m)$ resp.\@
      $\rSh^{-1}(n_1,\dotsc,n_m)$.

    \subsection{\texorpdfstring{$\Sy$-Modules}{S-Modules}} \label{S:Prelim:SMod}
      A \emph{dg $\Sy$-module $M$} consists of a (right) dg $\Fk[\Sy_n]$-module $M(n)$ for each \emph{arity}
      $n \in \NN$.  We sometimes write a dg $\Sy$-module $M$ as a sequence $(M(0), M(1), \dotsc)$.  A \emph{morphism
      of dg $\Sy$-modules $f\colon M \to N$} consists of a morphism $f(n)\colon M(n) \to N(n)$ of dg
      $\Fk[\Sy_n]$-modules for each \emph{arity} $n \in \NN$.  We denote the category of dg $\Sy$-modules by
      $\dgSMod$.  When $M(0) = 0$, we call $M$ \emph{reduced}.

      \subsubsection{Monoidal structure}
      For $(M,\diff^M), (N,\diff^N) \in \dgSMod$, we define their \emph{composite product}
      $(M\circ N,\diff^{M\circ N})$ by
      \begin{equation*}
        (M \circ N)(n) \defeq \Dsum_{m\in\NN} \Dsum_{n=n_1+\dotsb+n_m}
          M(m) \tensor_{\Sy_m} \Ind^{\Sy_n}_{\Sy_{n_1}\times\dotsb\times\Sy_{n_m}}
          \Big(N(n_1) \tensor \dotsb \tensor N(n_m)\Big) ,
        %\label{eq:SMod:cprod}
      \end{equation*}
      with the differential given by
      \begin{equation*}
        \diff^{M\circ N} \defeq \Dsum_{m\in\NN} \Dsum_{n=n_1+\dotsb+n_m} \diff^M\tensor_{\Sy_m} 1^{\tensor m}
          + 1\tensor_{\Sy_m}\left( \sum_{i=1}^{m} 1^{\tensor (i-1)}\tensor\diff^N\tensor 1^{\tensor(m-i)} \right) .
      \end{equation*}
      The \emph{composite product of morphisms $f\colon M \to M'$, $g\colon N \to N'$} is defined to be the morphism
      $f\circ g\colon M\circ N \to M'\circ N'$ given by
      \begin{equation*}
        f\circ g \defeq \Dsum_{m\in\NN} \Dsum_{n=n_1+\dotsb+n_m} f\tensor_{\Sy_m} g^{\tensor m} .
      \end{equation*}

      Note that, since the shuffles $\Sh(n_1,\dotsc,n_m)$ form a set of representatives for the cosets
      $\Sy_n/(\Sy_{n_1}\times\dotsb\times\Sy_{n_m})$, this composite product admits an expansion
      \begin{align*}
        (M \circ N)(n) &= \Dsum_{m\in\NN} \Dsum_{n=n_1+\dotsb+n_m}
          M(m) \tensor_{\Sy_m} \Big(N(n_1) \tensor \dotsb \tensor N(n_m) \tensor \Fk[\Sh(n_1,\dotsc,n_m)]\Big) ,
        %\label{eq:SMod:cprodshexp}
      \intertext{and when the $\Sy$-module $N$ is reduced, the $\Sy_m$-action on the right is free and therefore we
      obtain}
        &= \Dsum_{m\in\NN} \Dsum_{n=n_1+\dotsb+n_m}
          M(m) \tensor N(n_1) \tensor \dotsb \tensor N(n_m) \tensor \Fk\Big[\RSh(n_1,\dotsc,n_m)\Big] .
        %\label{eq:SMod:cprodrshexp}
      \end{align*}
      We denote an element $\mu \tensor_{\Sy_m} \nu_1 \tensor \dotsb \tensor \nu_m \tensor \sigma^{-1}$ of $M\circ N$
      by $\mu\circ(\nu_1,\dotsc,\nu_m)^\sigma$.

      The $\Sy$-module $\smodI = (0, \Fk, 0, 0, \dotsc)$ acts as a (two-sided) \emph{unit} w.r.t.\@ the composite
      product.  This structure turns the category of dg $\Sy$-modules into a monoidal category
      $(\dgSMod,\circ,\smodI)$.

      \subsubsection{The linearized composite product}  Consider the composite product $M\circ(N_1 \oplus N_2)$.  We
      denote by $M \circ (N_1; N_2)$ the sub dg $\Sy$-module that is linear in $N_2$, i.e.\@ that is spanned by
      elements $\mu\circ(\nu_1,\dotsc,\nu_m)^\sigma$ where $\nu_i \in N_2$ for exactly one of $\nu_1,\dotsc,\nu_m$ and
      $\nu_j \in N_1$ for $j\neq i$.  We use the notation $M\pcirc N$ instead of $M\circ(\smodI;N)$ since it appears
      so frequently.  We write $\mu\tensor_i\nu$ for $\mu\circ(1,\dotsc,1,\nu,1,\dotsc,1)$ with $\nu$ in $i$-th
      place.

      We introduce two types of linearized composite products of morphisms.  Given $f\colon M \to M'$,
      $g_1\colon N_1 \to N_1'$, and $g_2\colon N_2 \to N_2'$, we define
      \begin{equation*}
        f\circ(g_1;g_2) \defeq \left(
            \begin{tikzcd}
              M\circ(N_1;N_2) \ar[r,hook]
              & M\circ(N_1\dsum N_2) \ar[r,"f\circ(g_1\dsum g_2)"]
              &[1em] M'\circ(N_1'\dsum N_2') \ar[r,two heads]
              & M'\circ(N_1';N_2')
            \end{tikzcd}
          \right) .
      \end{equation*}
      Consider now $f\colon M \to M'$ and $g\colon N \to N'$.  We denote by $f\circ_{(1)}g$ the morphism
      $f\circ(1;g)$, i.e.\@ $\big(f\circ_{(1)}g\big)(\mu\tensor_i\nu) = (-1)^{|g||\mu|} f(\mu)\tensor_i g(\nu)$.
      Given the same data, we can also define a morphism
      \begin{align*}
        &f\circ'g\colon M\circ N \to M'\circ(N;N') , &
        f\circ'g &\defeq \Dsum_{m\in\NN} \Dsum_{n=n_1+\dotsb+n_m}
          f \tensor_{\Sy_m} \left( \sum_{i=1}^m 1^{\tensor(i-1)}\tensor g\tensor 1^{\tensor(m-i)}\right) .
      \end{align*}
      When $N' = N$ we implicitly postcompose with $M\circ(N;N) \onto M\circ N$ s.t.\@
      $f\circ'g\colon M\circ N \to M'\circ N$.  With this notation, we can write the differential of the full
      composite product as $\diff^{M\circ N} = \diff^M\circ 1 + 1\circ'\diff^N$.

    \subsection{Operads and cooperads} \label{S:Prelim:Op}
      A \emph{dg operad} is a monoid $(\opdP,\gamma,\eta)$ in $\dgSMod$, i.e.\@ an $\Sy$-module $\opdP$ with
      \emph{composition} map $\gamma\colon \opdP\circ\opdP \to \opdP$ and \emph{unit} $\eta\colon \smodI \to \opdP$
      satisfying associativity and left and right unit axioms.  A \emph{morphism of dg operads
      $f\colon \opdP \to \opdP'$} is a monoid morphism, i.e.\@ a morphism of the underlying dg $\Sy$-modules commuting
      with the structure maps.  An \emph{augmentation for $\opdP$} is a morphism of dg operads
      $\varepsilon\colon \opdP \to \smodI$ s.t.\@ $\varepsilon\eta = \id$.  We denote by $\red{\opdP}$ the
      \emph{augmentation ideal} $\red{\opdP} \defeq \ker\varepsilon$ and the restriction of $\gamma$ to it by
      $\red{\gamma}\colon \red{\opdP}\circ\red{\opdP} \to \red{\opdP}$.  We call $\opdP$ \emph{reduced}, if its
      underlying $\Sy$-module is reduced.

      Dually, a \emph{dg cooperad} is a comonoid $(\coopdC,\Delta,\varepsilon)$ in $\dgSMod$, i.e.\@ an $\Sy$-module
      $\coopdC$ with \emph{decomposition} map $\decomp\colon \coopdC \to \coopdC\circ\coopdC$ and \emph{counit}
      $\varepsilon\colon \coopdC \to \smodI$ satisfying coassociativity and left and right counit axioms.  A
      \emph{morphism of dg cooperads $f\colon \coopdC \to \coopdC'$} is a comonoid morphism, i.e.\@ a morphism of the
      underlying dg $\Sy$-modules commuting with the structure maps.  A \emph{coaugmentation for $\coopdC$} is a
      morphism of dg cooperads $\eta\colon \smodI \to \coopdC$ s.t.\@ $\varepsilon\eta = \id$.  We denote by
      $\red{\coopdC}$ the \emph{coaugmentation coideal} $\red{\coopdC} \defeq \coker\eta$ and by
      $\rdecomp\colon \red{\coopdC} \to \red{\coopdC}\circ\red{\coopdC}$ the corestriction of $\decomp$, i.e.\@
      $\decomp(\mu) = 1\circ\mu + \rdecomp(\mu) + \mu\circ(1,\dotsc,1)$.  A coaugmented dg cooperad is called
      \emph{conilpotent} if for any element its successive decompositions stabilize, see
      \cite[\S5.8.6]{loday2012algebraic} for the technical details.  We call $\coopdC$ \emph{reduced}, if its
      underlying $\Sy$-module is reduced.

      \subsubsection{Infinitesimal (de)composition}
      By restriction, we obtain for an operad $(\opdP,\gamma,\eta)$ the \emph{infinitesimal} or \emph{partial
      composition} map $\gamma_{(1)}\colon \opdP\circ_{(1)}\opdP \to \opdP$.  Similarly, we obtain the
      \emph{infinitesimal} of \emph{partial decomposition} map
      $\Delta_{(1)}\colon \coopdC \to \coopdC\circ_{(1)}\coopdC$ for a cooperad $(\coopdC,\Delta,\varepsilon)$ by
      corestriction.  Note that the (co)operad (de)composition maps are completely determined by their partial
      (de)composition counterparts.
      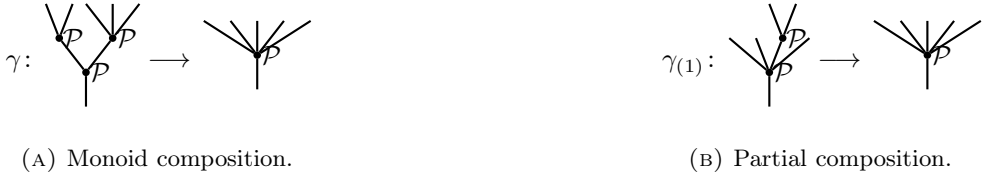
\begin{figure}[ht]
        \begin{subfigure}[b]{.5\linewidth}
          \begin{align*}
            \gamma\colon &
            \begin{aligned}\begin{tikzpicture}[PRT]
              \coordinate(root) child { node[label=right:$\opdP$] {}
                child { node[label=right:$\opdP$] {} child child }
                child[missing]
                child { node[label=right:$\opdP$] {} child child child }
              };
            \end{tikzpicture}\end{aligned}
            \longrightarrow
            \begin{aligned}\begin{tikzpicture}[PRT]
              \coordinate(root) child { node[label=right:$\opdP$] {}
                child child child child child
              };
            \end{tikzpicture}\end{aligned}
          \end{align*}
          \caption{Monoid composition.}
          %\label{fig:OpdComp:monoid}
        \end{subfigure}%
        \begin{subfigure}[b]{.5\linewidth}
          \begin{align*}
            \gamma_{(1)}\colon &
            \begin{aligned}\begin{tikzpicture}[PRT]
              \coordinate(root) child { node[label=right:$\opdP$] {}
                child
                child
                child { node[label=right:$\opdP$] {} child child }
                child
              };
            \end{tikzpicture}\end{aligned}
            \longrightarrow
            \begin{aligned}\begin{tikzpicture}[PRT]
              \coordinate(root) child { node[label=right:$\opdP$] {}
                child child child child child
              };
            \end{tikzpicture}\end{aligned}
          \end{align*}
          \caption{Partial composition.}
          %\label{fig:OpdComp:partial}
        \end{subfigure}
        \caption{Comparison of operadic composition maps.}
      \end{figure}
      We use the classical notation $\mu \circ_i \nu \defeq \gamma_{(1)}(\mu \tensor_i \nu)$.

      \subsubsection{The Hadamard tensor product} Given operads $\opdP$, $\opdP'$, we can equip the arity-wise tensor
      product of their underlying $\Sy$-modules $(\opdP\tensor\opdP')(n) \defeq \opdP(n)\tensor\opdP'(n)$ with an
      operad structure as follows.  For elements $\mu\tensor\mu' \in (\opdP\tensor\opdP')(m)$ and
      $\nu\tensor\nu' \in (\opdP\tensor\opdP')(n)$, define their $i$-th partial composition by
      \begin{equation*}
        (\mu\tensor\mu') \circ_i (\nu\tensor\nu') \defeq (\mu\circ_i\nu) \tensor (\mu'\circ_i\nu') .
      \end{equation*}
      We call the resulting operad $\opdP\tensor\opdP'$ the \emph{Hadamard product} of $\opdP$ and $\opdP'$.

      \subsubsection{The endomorphism operad} Given a chain complex $(V,\diff)$, we define its \emph{endomorphism
      operad} $\End_V$ as follows.  Let $\End_V(n) = \hom(V^{\tensor n}, V)$ be the space of $n$-multilinear maps on
      $V$.  Note that $\hom$ denotes internal homomorphisms, i.e.\@ elements of $\End_V(n)$ need not be chain maps.
      With the symmetric group action given by permutation of inputs
      \begin{equation*} %\label{eq:EndV:sigma}
        \mu^\sigma(v_1,\dotsc,v_n) \defeq \varepsilon(\sigma;v_1,\dotsc,v_n) \cdot
          \mu(v_{\sigma^{-1}(1)},\dotsc,v_{\sigma^{-1}(n)}) ,
      \end{equation*}
      where $\varepsilon$ denotes the Koszul sign, and the usual differential
      $\partial(\mu) = \diff^V\circ\mu - (-1)^{|\mu|} \mu\circ\diff^{V^{\tensor n}}$, $\End_V$ becomes a dg
      $\Sy$-module.  For $\mu\in\End_V(n)$ and $\nu\in\End_V(k)$, we define their partial composition by
      \begin{equation*} %\label{eq:EndV:pcomp}
        (\mu\circ_i\nu)(v_1,\dotsc,v_{n+k-1}) \defeq (-1)^{|\nu|(|v_1|+\dotsb+|v_{i-1}|)} \cdot
          \mu(v_1,\dotsc,v_{i-1},\nu(v_i,\dotsc,v_{i+k-1}),v_{i+k},\dotsc,v_{n+k-1}) .
      \end{equation*}

      We introduce the shorthand notation $\opdS \defeq \End_{s\Fk}$ and call this operad the \emph{suspension
      operad}.  Similarly, we introduce the \emph{desuspension operad} $\opdS^{-1} \defeq \End_{s^{-1}\Fk}$.  The
      operad $\opd{uCom} \defeq \End_\Fk$ governs \emph{unital commutative algebras}.  The operad $\Com$ differs from
      $\opd{uCom}$ only in arity $0$ where $\opd{uCom}(0) = \hom(\Fk,\Fk) \iso \Fk$ while $\Com(0) = 0$.

      \subsubsection{The (co)free (co)operad}  For any $M \in \dgSMod$, we denote by $\FOpd(M)$ the \emph{free
      augmented dg operad on $M$} and by $\cFCoopd(M)$ the \emph{cofree conilpotent dg cooperad on $M$}.  The
      underlying dg $\Sy$-module is the same in both cases.  It consists of planar trees with vertices labeled by
      elements of $M$ of arity corresponding to the number of incoming edges at the vertex and with a total ordering
      on the set of leaves.  Free composition is then given by grafting of such trees and reordering the leaves.  The
      (co)free (co)operad carries an extra grading we call weight-grading.  The component $\FOpd(M)^{(w)}$ resp.\@
      $\cFCoopd(M)^{(w)}$ of weight $w$ is spanned by the trees with precisely $w$ vertices.  We refer the reader to
      \cite{loday2012algebraic} for more details.

    \subsection{The category of \texorpdfstring{$\opdP$-algebras}{P-algebras}} %\label{S:Prelim:PAlg}
      By definition of operads as monoids in dg $\Sy$-modules, they come with the usual notions of modules.  Of
      particular interest is the notion of a \emph{$\opdP$-algebra}, i.e.\@ a left $\opdP$-module structure on a chain
      complex viewed as an $\Sy$-module concentrated in arity 0.  Such a module structure is given by a morphism
      $\gamma_A\colon \opdP \circ A \to A$.  Under the currying isomorphism, this map corresponds to a collection of
      morphisms $\opdP(n) \to \hom(A^{\tensor n}, A)$.  In fact, a \emph{$\opdP$-algebra $(A,\diff,\gamma_A)$} is
      equivalently a triple $(A,\diff,g)$ consisting of a chain complex $(A,\diff)$ equipped with a morphism of
      operads $g\colon \opdP \to \End_A$.

      \subsubsection{Morphisms of $\opdP$-algebras} \label{S:Prelim:PAlg:mor} A \emph{morphism
      $f\colon (A,\diff,\gamma_A) \to (A',\diff',\gamma_A')$ of $\opdP$-algebras} is a morphism of the underlying
      chain complexes $f\colon (A,\diff) \to (A',\diff')$ commuting with the structure maps, i.e.\@ such that the
      following diagram commutes:
      \begin{equation}
        \begin{tikzcd}
          \opdP\circ A \ar[r,"\gamma_A"] \ar[d,"1\circ f"'] & A \ar[d,"f"]  \\
          \opdP\circ A' \ar[r,"\gamma_A'"] & A' .
        \end{tikzcd}
        \label{eq:opdAlg:dfmod}
      \end{equation}
      Note that, under the currying isomorphism, the diagonal $\opdP\circ A \to A'$ corresponds to a collection of
      morphisms $\opdP(n) \to \hom(A^{\tensor n},A')$.

      Given chain complexes $(V,\diff)$ and $(V',\diff')$, one may consider the sub dg $\Sy$-module
      $\End^V_{V'} \subset \End_{V\dsum V'}$ given by $\End^V_{V'}(n) = \hom(V^{\tensor n},V')$.  While $\End^V_{V'}$
      is not a sub dg operad, it does have an obvious $(\End_{V'},\End_V)$-bimodule structure given by restriction of
      the composition map in $\End_{V\dsum V'}$.  Using this notation, commutativity of the diagram in
      \cref{eq:opdAlg:dfmod} can equivalently be expressed as commutativity of the diagram
      \begin{equation*}
        \begin{tikzcd}
          \opdP \ar[r,"g"] \ar[d,"g'"'] & \End_A \ar[d,"f_*"]  \\
          \End_{A'} \ar[r,"f^*"] & \End^A_{A'} ,
        \end{tikzcd}
        %\label{eq:opdAlg:dfrep}
      \end{equation*}
      where $f_*$ resp.\@ $f^*$ denote post- resp.\@ pre-composition with $f$.

    \subsection{The cobar-bar resolution} \label{S:Prelim:CoBar}
      The category of reduced augmented dg operads over a commutative ring $\Fk$ admits a model category structure
      \cite{Hinich97,Hinich03,BergerMoerdijk03}, where the weak equivalences are the arity-wise quasi-isomorphisms
      and the fibrations are the arity-wise degree-wise epimorphisms.  In this context, we will be interested in
      cofibrant replacements $\opdQ \xto{\qiso} \opdP$ for a given operad $\opdP$.  Algebras over such a cofibrant
      replacement satisfy a certain homotopy invariance property \cite[Theorem 3.5]{BergerMoerdijk03}.  One way to
      obtain such a cofibrant resolution for an operad is by means of the cobar-bar resolution
      \cite{GetzlerJones94,BergerMoerdijk06}, i.e.\@ the counit of the adjunction
      \begin{equation*}
        \begin{tikzcd}[/tikz/baseline=-0.5ex]
          \COBAR : \cat{conil dg Coop}
            \arrow[transform canvas={yshift=0.5ex}]{r}{}
          & \cat{aug dg Op} : \BAR
            \arrow[transform canvas={yshift=-0.5ex}]{l}{}
        \end{tikzcd} ,
      \end{equation*}
      which we recall here.

      \subsubsection{The bar construction} The \emph{bar construction} of an augmented dg operad $\opdP$ is the cofree
      conilpotent dg cooperad on the arity-wise suspension $s\red{\opdP}$ with an additional term in the
      codifferential, $\BAR\opdP \defeq \left(\cFCoopd(s\red{\opdP}), \diff = \diff_1 - \diff_2\right)$.  The
      component $\diff_1$ of the codifferential $\diff$ is the coextension of the internal differential of $\opdP$
      with appropriate shifting, i.e.\@ $\diff_1$ coextends
      \begin{equation*}
        \begin{tikzcd}[/tikz/baseline=-0.5ex]
          \cFCoopd(s\red{\opdP})  \arrow[r, two heads]
          & \cFCoopd(s\red{\opdP})^{(1)} \iso s\red{\opdP}  \arrow[r,"\diff^{s\red{\opdP}}"]
          & s\red{\opdP}
        \end{tikzcd} .
      \end{equation*}
      Similarly, $\diff_2$ coextends the multiplication of $\opdP$ with appropriate shifting, i.e.\@ the following
      composition,
      \begin{equation*}
        \begin{tikzcd}[/tikz/baseline=-0.5ex]
          \cFCoopd(s\red{\opdP})  \arrow[r, two heads]
          & \cFCoopd(s\red{\opdP})^{(2)}
            \iso s\red{\opdP} \circ_{(1)} s\red{\opdP}  \arrow[rr,"s^{-1}\circ_{(1)}s^{-1}"]
          && \red{\opdP} \circ_{(1)} \red{\opdP}  \arrow[r,"\gamma_{(1)}"]
          & \red{\opdP}  \arrow[r,"s"]
          & s\red{\opdP}
        \end{tikzcd} .
      \end{equation*}
      In plain english, the codifferential amounts to the sum of applying $\diff^{\red{\opdP}}$ at each vertex and
      contracting each internal edge while composing its source and target vertex by the partial composition map
      $\gamma_{(1)}$ (with appropriate signs), as shown in \Cref{fig:BarCodiff}.
      \begin{figure}[h]
        \begin{equation*}
          \diff^{\BAR\opdP}\left(
            \begin{aligned}\begin{tikzpicture}[PRT]
              \coordinate(root) child { node[label=-2:$\,\mu_1$] (a) {}
                child { node[label=right:$\mu_2$] (b) {}
                  child
                  child { node[label=right:$\mu_3$] {} child child }
                }
                child[missing]
                child[missing]
                child { node[label=right:$\,\mu_4$] {} child child child }
              };
            \end{tikzpicture}\end{aligned}
          \right) = \sum_\textrm{vertices} \pm
            \begin{aligned}\begin{tikzpicture}[PRT]
              \coordinate(root) child { node[label=-2:$\,\mu_1$] (a) {}
                child { node[label=right:$\mu_2$] (b) {}
                  child
                  child { node[label=right:$\mu_3$] {} child child }
                }
                child[missing]
                child[missing]
                child { node[label=right:$\,\mu_4$] {} child child child }
              };
              \node[PRTmark,circle,dashed,fit=(a),xshift=.9ex,yshift=.3ex,inner sep=1.2ex,
                  label={-45:$\diff^{\opdP}$}] {};
            \end{tikzpicture}\end{aligned}
          + \sum_\textrm{inner edges} \pm
            \begin{aligned}\begin{tikzpicture}[PRT]
              \coordinate(root) child { node[label=-2:$\,\mu_1$] (a) {}
                child { node[label=right:$\mu_2$] (b) {}
                  child
                  child { node[label=right:$\mu_3$] {} child child }
                }
                child[missing]
                child[missing]
                child { node[label=right:$\,\mu_4$] {} child child child }
              };
              \node[PRTmark,dashed,fit=(a) (b),rotate=-32,xshift=.7ex,yshift=.5ex,yscale=0.5,inner sep=1.2ex,
                  label={below:$\gamma_{(1)}$}] {};
            \end{tikzpicture}\end{aligned}
          .
        \end{equation*}
        \caption{Codifferential of the bar construction.}
        \label{fig:BarCodiff}
      \end{figure}

      \subsubsection{The cobar construction} The cobar construction of a conilpotent dg cooperad $\coopdC$ is defined
      as the dg operad $\COBAR\coopdC\defeq \left(\FOpd(s^{-1}\red{\coopdC}), \diff = \diff_1 - \diff_2\right)$.  The
      differential again has two terms.  The $\diff_1$ term is the extension of
      \begin{equation*}
        \begin{tikzcd}[/tikz/baseline=-0.5ex]
          s^{-1}\red{\coopdC}  \arrow[r,"\diff^{s^{-1}\red{\coopdC}}"]
          & s^{-1}\red{\coopdC} \iso \FOpd(s^{-1}\red{\coopdC})^{(1)}  \arrow[r, hook]
          & \FOpd(s^{-1}\red{\coopdC})
        \end{tikzcd}
      \end{equation*}
      and $\diff_2$ extends
      \begin{equation*}
        \begin{tikzcd}[/tikz/baseline=-0.5ex]
          s^{-1}\red{\coopdC}  \arrow[r,"s"]
          & \red{\coopdC}  \arrow[r,"\Delta_{(1)}"]
          & \red{\coopdC} \circ_{(1)} \red{\coopdC}  \arrow[rr,"s^{-1}\circ_{(1)}s^{-1}"]
          && s^{-1}\red{\coopdC} \circ_{(1)} s^{-1}\red{\coopdC}
            \iso \FOpd(s^{-1}\red{\coopdC})^{(2)}  \arrow[r, hook]
          & \FOpd(s^{-1}\red{\coopdC})
        \end{tikzcd} .
      \end{equation*}

      \subsubsection{The adjunction} The fact that the cobar and bar constructions form an adjunction
      $\COBAR\dashv\BAR$ is most easily seen from \cref{eq:Tw:Iso} below after introducing the twisting morphism
      bifunctor.

      For the counit of the cobar-bar adjunction to give a cofibrant resolution of an operad $\opdP$, one needs the
      assumption that the operad $\opdP$ is $\Sy$-cofibrant.  This condition means that the underlying $\Sy$-module is
      in fact cofibrant, i.e.\@ the $\Fk[\Sy_n]$-modules $\opdP(n)$ are projective in each arity $n$.  This is always
      true when $\Fk$ is a field of characteristic $0$ and for this special case the above result is contained in
      \cite{GetzlerJones94,GinzburgKapranov94}.

      \begin{thm}[{\cite[\S8.5]{BergerMoerdijk06}}]
        The counit of the cobar-bar adjunction gives a cofibrant resolution $\COBAR\BAR\opdP \xto{\qiso} \opdP$,
        provided the operad $\opdP$ is $\Sy$-cofibrant.
      \end{thm}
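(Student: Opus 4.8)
The theorem packages two assertions about the counit $\varepsilon_{\opdP}\colon\COBAR\BAR\opdP\to\opdP$: that it is a quasi-isomorphism, and that $\COBAR\BAR\opdP$ is cofibrant in the model category of reduced augmented dg operads; the $\Sy$-cofibrancy hypothesis is used only for the latter. For the quasi-isomorphism, I would route through the twisting-morphism formalism of \Cref{S:Prelim:Tw}. Under the adjunction $\COBAR\dashv\BAR$, with both $\Hom$-sets identified with the set of twisting morphisms $\coopdC\to\opdP$,
\begin{equation*}
  \Hom_{\cat{aug dg Op}}(\COBAR\coopdC,\opdP)\iso\Hom_{\cat{conil dg Coop}}(\coopdC,\BAR\opdP),
\end{equation*}
the counit $\varepsilon_{\opdP}$ corresponds to the universal twisting morphism $\pi\colon\BAR\opdP\to\opdP$, namely desuspension $s^{-1}$ on the weight-one part $s\red{\opdP}$ and zero on higher weights. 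By the comparison lemma for twisting morphisms (see \cite[\S6.5--6.6]{loday2012algebraic}), $\varepsilon_{\opdP}$ is a quasi-isomorphism as soon as the associated Koszul complex, i.e.\@ the twisted composite product $\opdP\circ_\pi\BAR\opdP$ (equivalently $\BAR\opdP\circ_\pi\opdP$), is acyclic; the acyclicity of this bar--cobar complex is classical. To reprove it I would equip $\opdP\circ_\pi\BAR\opdP$ with a suitable exhaustive filtration, which in each fixed arity $n$ is finite since a tree with $n$ leaves labelled by the reduced $\Sy$-module $s\red{\opdP}$ (concentrated in arities $\ge 2$) has at most $n-1$ vertices, chosen so that on the associated graded the surviving part of the differential is the differential of a two-sided bar-type construction; the standard contracting homotopy, which inserts resp.\@ extracts the topmost operation, trivialises the associated graded, and one checks it is $\Sy_n$-equivariant and compatible with the arity decomposition. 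Finiteness of the filtration in each arity then upgrades this to acyclicity of $\opdP\circ_\pi\BAR\opdP$. No hypothesis on $\opdP$ is needed for this half.

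For cofibrancy, observe that $\COBAR\BAR\opdP=\bigl(\FOpd(s^{-1}\red{\BAR\opdP}),\diff_1-\diff_2\bigr)$ is quasi-free: forgetting the differential it is the free operad on the $\Sy$-module $s^{-1}\red{\BAR\opdP}$, whose underlying $\Sy$-module is, up to a shift, the reduced cofree conilpotent cooperad $\cFCoopd(s\red{\opdP})$. Here the hypothesis enters: if $\opdP$ is $\Sy$-cofibrant then so is $s\red{\opdP}$, and since in each arity $\cFCoopd(M)(n)$ is a direct sum --- indexed by isomorphism classes of trees with $n$ leaves --- of $\Fk[\Sy_n]$-modules induced from the tree-automorphism subgroups acting on tensor products of the modules $M(k)$, it is again $\Sy$-cofibrant whenever the $M(k)$ are; hence the $\Sy$-module of generators $s^{-1}\red{\BAR\opdP}$ is $\Sy$-cofibrant. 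It then remains to invoke the standard fact that the cobar construction of a conilpotent dg cooperad with $\Sy$-cofibrant underlying $\Sy$-module is a cofibrant operad --- a quasi-free operad on $\Sy$-cofibrant generators whose differential is suitably filtered is a cell operad (see \cite{Hinich03,BergerMoerdijk03}).

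The main obstacle is the acyclicity in the first part: one must arrange the filtration on $\opdP\circ_\pi\BAR\opdP$ so that the associated-graded differential is \emph{exactly} the classical bar differential admitting the explicit contracting homotopy, and then verify that this homotopy is $\Sy_n$-equivariant and respects the decomposition into arities. Once that is in place the comparison lemma finishes the quasi-isomorphism, and the cofibrancy half is essentially formal, modulo the well-known --- but easily checked --- fact that forming (co)free (co)operads preserves $\Sy$-cofibrancy.
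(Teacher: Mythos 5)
The paper does not prove this statement at all: it is quoted as a known result and attributed to \cite[\S8.5]{BergerMoerdijk06}, so there is no internal proof to compare against. Your outline is, in substance, the standard argument from the literature (Fresse, Loday--Vallette \S 6.5--6.6, Berger--Moerdijk), and the two-step split --- acyclicity of the Koszul complex of the universal twisting morphism $\pi\colon\BAR\opdP\to\opdP$ for the quasi-isomorphism, plus quasi-freeness on $\Sy$-cofibrant generators with the coradical filtration for cofibrancy --- is exactly how the cited sources organize it. Two small corrections. First, with the conventions of \Cref{S:Prelim:Tw} the relevant Koszul complex is $\BAR\opdP\circ_\pi\opdP$ (the cooperad sits on the left in $\coopdC\circ_\alpha\opdP$); your ``equivalently'' is harmless since both one-sided complexes are acyclic, but you should work with the one the comparison proposition actually refers to. Second, your claim that ``no hypothesis on $\opdP$ is needed for this half'' is too strong in the setting of this paper: over an arbitrary unital commutative ring $\Fk$ the comparison proposition you invoke explicitly assumes that $\coopdC(n)$ and $\opdP(n)$ are $\Fk$-projective, and the passage from acyclicity of the associated graded to acyclicity of the filtered complex also uses this. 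The hypothesis is satisfied here because $\Sy$-cofibrancy of $\opdP$ gives $\Fk$-projectivity of each $\opdP(n)$ and hence of each $\BAR\opdP(n)$, so nothing breaks --- but the $\Sy$-cofibrancy assumption is quietly doing work in both halves of the proof, not only in the cofibrancy half.
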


    \subsection{Twisting morphisms} \label{S:Prelim:Tw}
      Let $\opdP$ be an augmented dg operad and $\coopdC$ a conilpotent dg cooperad.  Since the operad underlying the
      cobar construction is the free operad $\FOpd(s^{-1}\red{\coopdC})$, any morphism of augmented operads
      $\COBAR(\coopdC) \to \opdP$ is uniquely determined by its value on generators $s^{-1}\red{\coopdC}$.  This means
      morphisms of augmented operads $\FOpd(s^{-1}\red{\coopdC}) \to \opdP$ are in one to one correspondence with
      morphisms of $\Sy$-modules $s^{-1}\red{\coopdC} \to \red{\opdP}$ or, equivalently, degree $-1$ morphisms of
      $\Sy$-modules $\red{\coopdC} \to \red{\opdP}$.  A similar argument holds for the bar construction and we thus
      obtain
      \begin{align*}
        \Hom_\cat{aug Op}(\FOpd(s^{-1}\red{\coopdC}), \opdP)
          \iso \hom_{\SMod}(\red{\coopdC},\red{\opdP})_{-1}
          \iso \Hom_\cat{conil Coop}(\coopdC, \cFCoopd(s\red{\opdP}))  .
      \end{align*}

      We introduce the subset of twisting morphisms
      $\Tw(\coopdC,\opdP) \subset \hom_{\SMod}(\red{\coopdC},\red{\opdP})_{-1}$ that correspond (under the above
      isomorphisms) to morphisms of dg (co)operads.  Given $f,g \in \hom_{\SMod}(\red{\coopdC},\red{\opdP})$, we
      define their pre-Lie \emph{convolution product} to be the composition
      \begin{equation*}
        f \star g = \left(
            \begin{tikzcd}
              \coopdC \ar[r, "\Delta_{(1)}"]
              & \coopdC \circ_{(1)} \coopdC  \ar[r, "f\circ_{(1)}g"]
              & \opdP \circ_{(1)} \opdP  \ar[r, "\gamma_{(1)}"]
              & \opdP
            \end{tikzcd}
          \right) .
      \end{equation*}
      The differential for $f \in \hom_{\SMod}(\red{\coopdC},\red{\opdP})$ is defined as usual by
      $\partial(f) = \diff^{\opdP} \circ f - (-1)^{|f|} f \circ \diff^{\coopdC}$.  We consider the subset of degree
      $-1$ elements satisfying the Maurer--Cartan equation,
      \begin{equation}
        \Tw(\coopdC, \opdP)
          \defeq \{\, \alpha \in \hom_{\SMod}(\red{\coopdC}, \red{\opdP})_{-1}
            \mid  \partial\alpha + \alpha\star\alpha = 0 \,\} .
        \label{eq:Tw}
      \end{equation}
      These elements are called \emph{twisting morphisms}.  A simple computation shows that the twisting morphism
      bifunctor is represented by the cobar and bar functors in the following sense:
      \begin{equation}
        \begin{tikzcd}[row sep=small]
          \Hom_\cat{dg aug Op}(\COBAR\coopdC, \opdP) \ar[r,<->,"\iso"] \ar[d,phantom,"\in" rotate=90]
          & \Tw(\coopdC, \opdP) \ar[r,<->,"\iso"] \ar[d,phantom,"\in" rotate=90]
          & \Hom_\cat{dg conil Coop}(\coopdC, \BAR\opdP) \ar[d,phantom,"\in" rotate=90]  \\
          g_\alpha \ar[r,<->]
          & \alpha \ar[r,<->]
          & f_\alpha  .
        \end{tikzcd}
        \label{eq:Tw:Iso}
      \end{equation}

      \subsubsection{Koszul twisting morphisms}  The differential of the composite product of $\Sy$-module
      $\coopdC\circ\opdP$ is defined as $\diff^{\coopdC\circ\opdP} = \diff^\coopdC \circ 1 + 1 \circ' \diff^\opdP$.
      Given a twisting morphism $\alpha\colon \coopdC \to \opdP$, we may use it to modify this differential by adding
      an additional term,
      \begin{equation}
        \diff_\alpha = \left(
            \begin{tikzcd}%[column sep=]
              \coopdC\circ\opdP  \ar[r, "\Delta_{(1)}\circ 1"]
              & (\coopdC\circ_{(1)}\coopdC)\circ\opdP  \ar[r, "(1\circ_{(1)}\alpha)\circ 1"]
              &[1em] (\coopdC\circ_{(1)}\opdP)\circ\opdP
              \iso \coopdC\circ(\opdP;\opdP\circ\opdP)  \ar[r, "1\circ(1;\gamma)"]
              & \coopdC\circ(\opdP;\opdP)  \ar[r, two heads]
              & \coopdC\circ\opdP
            \end{tikzcd}
          \right) .
        \label{eq:TwDiff}
      \end{equation}
      We denote by $\coopdC\circ_\alpha\opdP$ the composite product $\coopdC\circ\opdP$ with the twisted differential
      $\diff^{\coopdC\circ_\alpha\opdP} = \diff^{\coopdC\circ\opdP} + \diff_\alpha$ and call it the \emph{twisted
      composite product}.

      We call a twisting morphism $\alpha$ \emph{Koszul} if the twisted composite product $\coopdC\circ_\alpha\opdP$
      is acyclic.  The set of Koszul twisting morphisms is denoted by $\Kos(\coopdC,\opdP)$.

      \begin{pp}
        Assume that $\coopdC(n)$ and $\opdP(n)$ are projective $\Fk$-modules for all $n$.
        Under the isomorphisms of \cref{eq:Tw:Iso}, the Koszul twisting morphisms correspond to the quasi-isomorphisms,
        \begin{equation*}
          \qIso_\cat{dg aug Op}(\COBAR\coopdC, \opdP)
            \iso \Kos(\coopdC, \opdP)
            \iso \qIso_\cat{dg conil Coop}(\coopdC, \BAR\opdP)  .
          %\label{eq:Kos:Iso}
        \end{equation*}
      \end{pp}

      \begin{proof}
        See \cite[Theorem~2.1.15]{Fresse04} or \cite[Theorem~6.6.1]{loday2012algebraic}.
      \end{proof}

    \subsection{Koszul duality} \label{S:Prelim:Koszul}
      In \Cref{S:Prelim:CoBar}, we have introduced the (functorial) cobar-bar resolution for any operad $\opdP$.  In
      this section, we consider a type of operads for which a much smaller resolution can be produced.

      Assume that we have a \emph{quadratic presentation $(E,R)$}, i.e.\@ an $\Sy$-module of \emph{generators} $E$,
      and a sub $\Sy$-module $R \subset \FOpd(E)^{(2)}$ of \emph{relations}.  We denote by $(R)$ the \emph{operadic
      ideal} generated by $R$, i.e.\@ the smallest sub $\Sy$-module $R \subset (R) \subset \FOpd(E)$ s.t.\@
      $\FOpd(E)/{(R)}$ with the induced structure is an operad.  This operad is denoted $\opdP = \opdP(E,R)$.  The
      same quadratic presentation also determines a cooperad $\coopdC(E,R)$; it is defined as the largest subcooperad
      of $\cFCoopd(E)$ for which the composition
      \begin{equation*}
        \begin{tikzcd}
          \coopdC(E,R) \ar[r, hook] & \cFCoopd(E) \ar[r,two heads] & \cFCoopd(E)/R
        \end{tikzcd}
      \end{equation*}
      vanishes, i.e.\@ any other such subcooperad inclusion $\coopdC \into \cFCoopd(E)$ factors through
      $\coopdC(E,R)$.

      We define the \emph{Koszul dual cooperad of $\opdP(E,R)$} to be $\opdP^\ash \defeq \coopdC(sE,s^2R)$.  It comes
      with a canonical twisting morphism
      \begin{equation*}
        \kappa = \left(
            \begin{tikzcd}
              \opdP^\ash = \coopdC(sE,s^2R) \ar[r, hook]
              & \cFCoopd(sE) \ar[r, two heads]
              & sE \ar[r, "s^{-1}"] \ar[r]
              & E \ar[r, hook]
              & \opdP(E,R)
            \end{tikzcd}
          \right) .
      \end{equation*}
      An operad $\opdP(E,R)$ is called \emph{Koszul} if the twisted composite $\opdP^\ash\circ_\kappa\opdP$ is acyclic
      or, equivalently, $g_\kappa\colon \COBAR\opdP^\ash \to \opdP$ is a quasi-isomorphism.  The classical operads
      $\Ass$ of associative algebras, $\Com$ of commutative associative algebras, and $\Lie$ of Lie algebras are
      examples of Koszul operads.

      Given a finite-dimensional presentation $(E,R)$, we can also produce a dual presentation.  We restrict ourselves
      to the case of a \emph{binary} quadratic presentation, i.e.\@ when $E=E(2)$ is concentrated in arity $2$.  The
      general case is essentially the same but requires some degree shifts.  Consider the $\Sy$-module
      $E^\vee \defeq E \tensor \Fk\cdot\sgn_2$ with the orthogonal space of relations
      $R^\perp \subset \FOpd(E^\vee)^{(2)}$.  This gives a new binary quadratic presentation $(E^\vee,R^\perp)$ and an
      associated operad $\opdP^! = \opdP(E^\vee,R^\perp)$ which we call the \emph{Koszul dual operad}.  One can show
      that the Koszul dual operad and cooperad are related by the equation $\opdP^\ash = (\opdS\tensor\opdP^!)^*$.

      For the classical operads we have $\Ass^! = \Ass$, $\Lie^! = \Com$, and $\Com^! = \Lie$.  Our interest lies in
      particular in the case of the $\Lie$ operad.  Here we obtain
      $\LieK = (\opdS\tensor\Lie^!)^* = (\opdS\tensor\Com)^* = \opdS^*$ and therefore $\LieK(0) = 0$ and
      $\LieK(n) = \big(\LieKgen{n}\cdot\Fk\cdot\sgn_n\big)[n-1]$ for $n\geq 1$ as an $\Sy$-module.  We will make its
      cooperad structure explicit in \Cref{S:LieD:LieKLeibK}.

    \subsection{Homotopy algebras and morphisms} \label{S:Prelim:htyAlg}
      Given any cooperad $\coopdC$, we can consider the category of \emph{homotopy algebras over $\COBAR\coopdC$}.
      The objects in this category are the usual $\COBAR\coopdC$-algebras, however we introduce a different notion of
      morphisms.  Recall that algebras over any operad $\opdP$ come with a natural notion of morphism.  Such a
      morphism of $\opdP$-algebras is a morphisms of the underlying chain complexes commuting with all structure maps,
      see \Cref{S:Prelim:PAlg:mor}.  In the context of homotopy algebras, this notion of morphism is too strict.  A
      better behaved type of morphism is introduced below.

    \subsubsection{Weak morphisms of homotopy algebras} \label{S:Prelim:wMor} Consider two $\COBAR\coopdC$-algebras
      $V$ and $V'$, i.e.\@ two chain complexes $(V,\diff)$, $(V',\diff')$ equipped with structure maps given by
      twisting morphisms
      \begin{align*}
        &  \alpha \in \Tw(\coopdC,\End_V) ,
        && \alpha' \in \Tw(\coopdC,\End_{V'}) .
      \end{align*}
      We apply the general theory as described in \cite[\S10.2.4]{loday2012algebraic}.  Even though we are not working
      with Koszul operads, the relevant proofs still hold for general cooperads $\coopdC$.  In particular, a weak
      morphism as defined below corresponds to a morphism of quasi-cofree codifferential $\coopdC$-coalgebras
      $\coopdC(V) \to \coopdC(V')$.

      A \emph{homotopy morphism} or \emph{weak morphism} of $\COBAR\coopdC$-algebras is a degree 0 solution to the
      following Maurer--Cartan equation:
      \begin{align}
        f &\colon \coopdC \to \End^V_{V'} ,
        &&\partial(f) - f \ast \alpha + \alpha' \circledast f = 0 ,
        \label{eq:htymor:MCE}
      \end{align}
      where
      \begin{align}
        f\ast\alpha &= \left(
            \begin{tikzcd}[ampersand replacement=\&]
              \coopdC \ar[r,"\Delta_{(1)}"]
              \& \coopdC\circ_{(1)}\coopdC \ar[r,"f\circ_{(1)}\alpha"]
              \& \End^V_{V'}\circ_{(1)}\End_V \ar[r]
              \& \End^V_{V'}
            \end{tikzcd}
          \right) ,  \\
        \alpha'\circledast f &= \left(
            \begin{tikzcd}[ampersand replacement=\&]
              \coopdC \ar[r,"\Delta"]
              \& \coopdC\circ\coopdC \ar[r,"\alpha'\circ f"]
              \& \End_{V'}\circ\End^V_{V'} \ar[r]
              \& \End^V_{V'}
            \end{tikzcd}
          \right) .
      \end{align}

      \subsubsection{Composition of weak morphisms} %\label{S:Prelim:wMor:Comp}
      Given weak morphisms $V \xto{f} V' \xto{f'} V''$, their composition $f'\circ f$ is defined to be
      \begin{equation*}
        f'\circ f = \left(
            \begin{tikzcd}
              \coopdC \ar[r,"\Delta"]
              & \coopdC\circ\coopdC \ar[r,"f'\circ f"]
              & \End^{V'}_{V''}\circ\End^V_{V'} \ar[r]
              & \End^V_{V''}
            \end{tikzcd}
          \right) .
        %\label{eq:htymor:comp}
      \end{equation*}
      Note the double use of the notation $f'\circ f$; the meaning should be clear from the context.

      It is simple to verify that $f'\circ f$ satisfies \cref{eq:htymor:MCE}, i.e.\@ $f'\circ f$ is indeed a weak
      morphism.  This definition for the composition is equivalent to the composition of the corresponding
      quasi-cofree codifferential $\coopdC$-coalgebras $\coopdC(V) \to \coopdC(V') \to \coopdC(V'')$.

  \section{An \texorpdfstring{$\Sy$-free}{S-free} resolution of the Koszul dual cooperad of the Lie operad}
    \label{S:LieD}
    In this section, we attempt to construct an $\Sy$-free resolution $\psi\colon \LieD \xto{\qiso} \LieK$ of dg
    cooperads over $\ZZ$ for the Koszul dual cooperad of the Lie operad.  Assume for a moment that we have such a
    resolution.  If we can show that $\psi$ is in fact a weak equivalence, i.e.\@ $\COBAR\psi$ is a quasi-isomorphism,
    then this gives us a small cofibrant resolution
    \begin{equation*}
      \begin{tikzcd}[column sep=huge]
        \ELinf\defeq\COBAR\LieD \ar[dr, start anchor=east, "g_{\kappa\circ\psi}", "\qiso"']
          \ar[d, "\COBAR\psi"', "\qiso"]  \\
        \Linf\defeq\COBAR\LieK \ar[r, "g_\kappa", "\qiso"']
          & \Lie
      \end{tikzcd}
    \end{equation*}
    of the Lie operad over $\ZZ$, and therefore over any unital commutative ring $\Fk$.  Unfortunately, we do not yet
    have a general method to obtain such an $\Sy$-free resolution.  Instead, we proceed degree-wise to construct a dg
    cooperad $\LieD[3]$ that satisfies the desired conditions in low degrees (see the following sections for the
    precise meaning of ``in low degrees.'')

    \begin{rk}
      While our notation below may suggest that we are working with truncations of a dg cooperad $\LieD$, this is not
      a proven result.  We do not have a complete resolution $\LieD \xto{\qiso} \LieK$, nor do we have a proof that
      our $\LieD[3]$ extends to such a resolution.
    \end{rk}

    The remainder of this section is organized as follows.  In \Cref{S:LieD:LieKLeibK}, we recall the Koszul dual
    cooperads $\LieK$ and $\LeibK$ of the Lie and Leibniz operads, respectively.  In \Cref{S:LieD:SMod}, we construct
    an explicit $\Sy$-free resolution $\psi\colon \LieD[3] \to \LieK$ as a dg $\Sy$-module in low degrees.  In
    \Cref{S:LieD:Coop}, we equip $\LieD[3]$ with a decomposition map and show that our definition turns it into a dg
    cooperad.  Finally, in \Cref{S:LieD:KosCx}, we prove that the homology of the twisted composite product
    $\LieD[3] \circ_{(\kappa\circ\psi)} \Lie$ vanishes in low degrees.

    \subsection{The Koszul dual cooperads of the Lie and Leibniz operads} \label{S:LieD:LieKLeibK}
      In \Cref{S:Prelim:Koszul}, we saw that the Koszul dual cooperad for $\Lie$ is $\LieK = \opdS^*$.  Explicitly,
      this means the $\Sy$-module underlying $\LieK$ is given by $\LieK(0) = 0$ and
      $\LieK(n) = \big(\LieKgen{n}\cdot\Fk\cdot\sgn_n\big)[n-1]$ for $n\geq 1$, and its decomposition map is given by
      \begin{equation}
        \begin{split}
          \decomp(\LieKgen{n}) =
            \sum_{\substack{1\leq j\leq n\\i_1+\dotsb+i_j=n}}
              (-1)^{(j-1)(n-j)} \cdot (-1)^{\sum_{p=1}^j (p-1)(i_p-1)}
            \sum_{\sigma\in\ruSh(i_1,\dotsc,i_j)}
              (-1)^{|\sigma|} \cdot \LieKgen{j}\circ(\LieKgen{i_1},\dotsc,\LieKgen{i_j})^\sigma  .
        \end{split}
        \label{eq:LeibK:decomp}
      \end{equation}
      From this we obtain, by projection to $\LieK\pcirc\LieK$, the partial decomposition map
      \begin{equation}
        \Delta_{(1)}(\LieKgen{n}) =
          \sum_{i+j=n+1} (-1)^{(j-1)(i-1)}
          \sum_{p=1}^j (-1)^{(p-1)(i-1)} \sum_{\sigma\in\ruSh(p-1,i)}
            (-1)^{|\sigma|} \cdot (\LieKgen{j} \tensor_p \LieKgen{i})^\sigma .
        \label{eq:LeibK:pdecomp}
      \end{equation}
      Using the skew-symmetry of the $\LieKgen{n}$, it can be rewritten as
      \begin{equation}
        \Delta_{(1)}(\LieKgen{n}) = \sum_{i+j=n+1} (-1)^{(j-1)(i-1)}
          \sum_{\sigma\in\uSh(i,j-1)} (-1)^{|\sigma|} \cdot (\LieKgen{j} \tensor_1 \LieKgen{i})^\sigma  .
        \label{eq:LieK:pdecomp}
      \end{equation}

      The Koszul dual operad $\Leib^! = \Zinb$ for $\Leib$ was first introduced by Loday in \cite{Loday95}.  Algebras
      over $\Leib^!$ were originally referred to as \emph{dual Leibniz algebras}, but are now more commonly known as
      \emph{Zinbiel algebras}.  As before, the Koszul dual cooperad $\LeibK$ can be computed as
      $\LeibK = (\opdS \tensor \Zinb)^*$.  Explicitly, it is given by $\LeibK(0) = 0$ and
      $\LeibK(n) = \big(\LeibKgen{n}\cdot\Fk[\Sy_n]\big)[n-1]$ for $n\geq 1$, and with (partial) decomposition map
      given as in \cref{eq:LeibK:decomp,eq:LeibK:pdecomp}, substituting $\LeibKgen{n}$ for $\LieKgen{n}$.  Note
      however, that \cref{eq:LeibK:pdecomp,eq:LieK:pdecomp} are not equivalent in this case, since the $\LeibKgen{n}$
      are not skew-symmetric.

      \subsubsection{Homotopy Lie algebras are homotopy Leibniz algebras} \label{S:LieD:LieKLeibK:psi}
      There is an obvious morphism of dg cooperads,
      \begin{align}
        \psi&\colon \LeibK \to \LieK ,  && \psi(\LeibKgen{n}[\sigma]) = (-1)^\sigma \cdot \LieKgen{n} .
        \label{eq:LeibKtoLieK}
      \end{align}
      Since $\LeibK$ is $\Sy$-free and this morphism is surjective, this provides us with a good starting point for
      our resolution of $\LieK$.

    \subsection{The resolution as an \texorpdfstring{$\Sy$-free dg $\Sy$-module}{S-free dg S-module}}
      \label{S:LieD:SMod}
      In this section, we fix an $\Sy$-free resolution of $\LieK$ as an $\Sy$-module in low degrees.  We describe, in
      general, a way to obtain an $\Sy$-free $\Sy$-module $\LieD[k]$ with a morphism $\psi\colon \LieD[k] \to \LieK$
      satisfying $\Hm_r(\psi) = 0$ for $r\leq k$.  We make such an $\Sy$-module explicit for $k=3$.

      We construct, for each arity $n \geq 1$, an exact augmented complex
      $0 \leftarrow \LieK(n)_{n-1} \leftarrow \LieD[k](n)_\bullet$ in $\Fk[\Sy_n]$-modules up to degree $k+1$.  As
      indicated in \Cref{S:LieD:LieKLeibK:psi}, we may choose $\LieD[k](n)_{n-1} \defeq \LeibK(n)_{n-1}$ and the
      augmentation map to be $\psi$ as defined by \cref{eq:LeibKtoLieK}, i.e.\@ our complexes are of the following
      shape:
      \begin{equation*}
        \begin{tikzcd}[column sep=large]
          0 & \ar[l] \LieK(n)_{n-1}
          & \ar[l, "\psi = \diff_{n-1}"'] \LeibK(n)_{n-1}
          & \ar[l, "\diff_n"'] \LieD[k](n)_{n}
          & \ar[l, "\diff_{n+1}"'] \dots
          & \ar[l, "\diff_{k+1}"'] \LieD[k](n)_{k+1} .
        \end{tikzcd}
        %\label{eq:LieD:augcx}
      \end{equation*}

      Our general approach to determining the higher degrees of $\LieD[k](n)$ is as follows.  For $r = n-1,\dotsc,k$,
      successively, we extend the complex using the following steps:
      \begin{enumerate}%[topsep=0ex]
        \item compute $\ker(\diff_r)$,
        \item choose generators $\{x_i\}_{i\in I}$ for $\ker(\diff_r)$ as a $\Fk[\Sy_n]$-module, and
        \item define $\LieD[k](n)_{r+1} \defeq \langle\hat{x}_i\rangle_{i\in I}$ to be the free $\Fk[\Sy_n]$-module
          generated by symbols $\{\hat{x}_i\}_{i\in I}$ and the differential by $\diff_{r+1}(\hat{x}_i) \defeq x_i$.
      \end{enumerate}
      Obviously, any complex $\LieD[k](n)$ constructed in this way will be exact.
      %The induced maps $\Hm_r\psi(n)\colon \Hm_r\LieD[k](n) \to \Hm_r\LieK(n)$ are isomorphisms for all $r\leq k$.
      We describe the beginning of these computations explicitly for general $n$.
      \begin{description}[leftmargin=0em]
        \item[Degree \boldmath$r=n-1$] Since
          \begin{equation*}
            \diff_{n-1}\big(\LieDgen{n} - (-1)^{|\sigma|}\cdot\LieDgen{n}[\sigma]\big)
              = \psi\big(\LieDgen{n} - (-1)^{|\sigma|}\cdot\LieDgen{n}[\sigma]\big)
              = \LieKgen{n} - (-1)^{|\sigma|}\cdot(-1)^{|\sigma|}\cdot\LieKgen{n}[\sigma]
              = 0
          \end{equation*}
          and $\dim_\Fk\big(\im(\diff_{n-1})\big) = 1$, we obtain
          $\ker(\diff_{n-1}) = \linspan_\Fk\left\{ \LieDgen{n} - (-1)^{|\sigma|}\cdot\LieDgen{n}[\sigma] \mid
            \sigma\in\Sy_n\setminus\id \right\}$.
          As a $\Fk[\Sy_n]$-module, the kernel is generated by the set
          $\{\LieDgen{n}+\LieDgen{n}[\sigma_i]\}_{i=1}^{n-1}$ for the adjacent transpositions $\sigma_i = (i\ i+1)$.
          We define
          \begin{align*}
            \LieD[k](n)_n &\defeq \langle \LieDgen{n;i} \mid 1\leq i < n \rangle ,
            & \diff_n(\LieDgen{n;i}) &= - \LieDgen{n} - \LieDgen{n}[\sigma_i]  .
          \end{align*}
        \item[Degree \boldmath$r=n$] Clearly, the following hold
          \begin{align*}
            &\diff_n\big( \LieDgen{n;i} - \LieDgen{n;i}[\sigma_i] \big)
              = \big( - \LieDgen{n} - \LieDgen{n}[\sigma_i] \big)
                - \big( - \LieDgen{n} - \LieDgen{n}[\sigma_i] \big)^{\sigma_i} = 0 ,  \\
            \begin{split}
              &\diff_n\big( \LieDgen{n;i} - \LieDgen{n;i+1}[\sigma_i] + \LieDgen{n;i}[\sigma_{i+1}\sigma_i]
                  - \LieDgen{n;i+1} + \LieDgen{n;i}[\sigma_{i+1}] + \LieDgen{n;i+1}[\sigma_i\sigma_{i+1}] \big)  \\
              &\quad= \big( - \LieDgen{n} - \LieDgen{n}[\sigma_i] \big)
                - \big( - \LieDgen{n} - \LieDgen{n}[\sigma_{i+1}] \big)^{\sigma_i}
                + \big( - \LieDgen{n} - \LieDgen{n}[\sigma_i] \big)^{\sigma_{i+1}\sigma_i}  \\
              &\qquad- \big( - \LieDgen{n} - \LieDgen{n}[\sigma_{i+1}] \big)
                + \big( - \LieDgen{n} - \LieDgen{n}[\sigma_i] \big)^{\sigma_{i+1}}
                - \big( - \LieDgen{n} - \LieDgen{n}[\sigma_{i+1}] \big)^{\sigma_i\sigma_{i+1}}  = 0 ,
            \end{split}
          \end{align*}
          and, for $|i-j|>1$,
          \begin{equation*}
            \diff_n\big( \LieDgen{n;i} - \LieDgen{n;j}[\sigma_i] - \LieDgen{n;j} + \LieDgen{n;i}[\sigma_j] \big)
              = \big( - \LieDgen{n} - \LieDgen{n}[\sigma_i] \big)
                - \big( - \LieDgen{n} - \LieDgen{n}[\sigma_j] \big)^{\sigma_i}
                - \big( - \LieDgen{n} - \LieDgen{n}[\sigma_j] \big)
                + \big( - \LieDgen{n} - \LieDgen{n}[\sigma_i] \big)^{\sigma_j} = 0 .
          \end{equation*}
          In fact, these elements generate the kernel of $\diff_n$ under the $\Fk[\Sy_n]$-action.  We omit the general
          proof here, since we will only need this result in arity $3$ where it is a trivial computation of the rank
          of a $12 \times 18$ matrix over $\Fk$.

          We define
          \begin{align*}
            & \LieD[k](n)_{n+1} \defeq \langle \LieDgen{n;i,j} \mid 1\leq i\leq j < n \rangle ,
          \intertext{and}
            & \diff_{n+1}(\LieDgen{n;i,j}) =
            \begin{cases}
              - \LieDgen{n;i} + \LieDgen{n;i}[\sigma_i] ,
                & j = i ,  \\
              \LieDgen{n;i} - \LieDgen{n;i+1}[\sigma_i] + \LieDgen{n;i}[\sigma_{i+1}\sigma_i]
                  - \LieDgen{n;i+1} + \LieDgen{n;i}[\sigma_{i+1}] + \LieDgen{n;i+1}[\sigma_i\sigma_{i+1}] ,
                & j = i+1 ,  \\
              \LieDgen{n;i} - \LieDgen{n;j}[\sigma_i] - \LieDgen{n;j} + \LieDgen{n;i}[\sigma_j] ,
                & j > i+1 .
            \end{cases}
          \end{align*}
      \end{description}

      The above computations are already enough for our purpose.  In summary, we obtain the following explicit dg
      $\Sy$-module $\LieD[3]$.
      \begin{equation}
        \begin{tikzcd}[ampersand replacement=\&, row sep=small, column sep=huge]
          \langle \LieDgen{2} \rangle
            \&\ar[l, "\begin{bsmallmatrix}-1-(12)\end{bsmallmatrix}"'] \langle \LieDgen{2;1} \rangle
            \&[+3.8em]\ar[l, "\begin{bsmallmatrix}-1+(12)\end{bsmallmatrix}"'] \langle \LieDgen{2;1,1} \rangle
            \&[+9.8em]\ar[l, "\begin{bsmallmatrix}-1-(12)\end{bsmallmatrix}"'] \langle \LieDgen{2;1,1,1} \rangle  \\
          \& \langle \LieDgen{3} \rangle
            \&\ar[swap]{l}{\begin{bsmallmatrix}-1-(12)&-1-(23)\end{bsmallmatrix}}
              \langle \LieDgen{3;1},\LieDgen{3;2} \rangle
            \&\ar[swap]{l}{\begin{bsmallmatrix}-1+(12)&1+(23)+(132)&0\\0&-1-(12)-(123)&-1+(23)\end{bsmallmatrix}}
              \langle \LieDgen{3;1,1},\LieDgen{3;1,2},\LieDgen{3;2,2} \rangle  \\
          \&\& \langle \LieDgen{4} \rangle
            \&\ar[swap]{l}{\begin{bsmallmatrix}-1-(12)&-1-(23)&-1-(34)\end{bsmallmatrix}}
              \langle \LieDgen{4;1},\LieDgen{4;2},\LieDgen{4;3} \rangle  \\
          \&\&\& \langle \LieDgen{5} \rangle
        \end{tikzcd}
        \label{eq:LieD:cx}
      \end{equation}
      Note that we left out arity 1 so far.  We will need to define $\LieD[3](1) \defeq \LieK[3](1) = \Fk$ for the
      counit of the cooperad structure introduced in the following section.

    \subsection{The resolution as a dg cooperad} \label{S:LieD:Coop}
      In this section, we describe how to equip a dg $\Sy$-module $\LieD[k]$, as obtained in the previous section,
      with a decomposition map $\decomp$ in such a way that
      \begin{conditions}
        \item \label{LieD:coopd} $(\LieD[k],\diff,\decomp)$ becomes a dg cooperad, and
        \item \label{LieD:mor:coopd} $\psi\colon \LieD[k] \to \LieK$ becomes a morphism of dg cooperads.
      \end{conditions}
      As before, we begin by explaining the general approach and then proceed to make such a structure explicit for
      the case $k=3$, i.e.\@ on the dg $\Sy$-module $\LieD[3]$ of the previous section.

      Consider \cref{LieD:mor:coopd} first.  Since we started our resolution of dg $\Sy$-modules by
      $\psi\colon \LieD[k](n)_{n-1}=\LeibK(n) \to \LieK(n)$, and this is in fact a morphism of dg cooperads, we define
      the decomposition map $\decomp$ on $\LieD[k](n)_{n-1}$ in the same way as on $\LeibK(n)$, i.e.\@ by
      \cref{eq:LeibK:decomp}.  It remains to define the decomposition map for the higher degree terms of
      $\LieD[k](n)$.

      Next, consider \cref{LieD:coopd}.  Note that it implies, in particular, that the decomposition map be a
      map of dg $\Sy$-modules, i.e.\@ $\decomp$ has to commute with the differential as in the diagram
      \begin{equation}
        \begin{tikzcd}
          \LieD[k](n)_{r+1} \ar[d, "\diff"] \ar[r, dashed, "\decomp"]
          & \big(\LieD[k]\circ\LieD[k]\big)(n)_{r+1} \ar[d, "\diff"]  \\
          \LieD[k](n)_r \ar[r, "\decomp"]
          & \big(\LieD[k]\circ\LieD[k]\big)(n)_r .
        \end{tikzcd}
        \label{eq:decomp:dg}
      \end{equation}
      We use this condition to define $\decomp$ as follows:  In each arity $n$, proceed degree-wise for
      $r=n-1,\dots,k+1$.  Given a $\Fk[\Sy_n]$-basis $\{x_i\}_{i\in I}$ for $\LieD[k](n)_{r+1}$, solve the equations
      $\diff(y_i) = \decomp(\diff x_i)$ for $y_i$ and define $\decomp$ by $\decomp(x_i) \defeq y_i$ and
      $\Fk[\Sy_n]$-linearity.  Finally, it remains to check that our decomposition map $\decomp$ satisfies the
      coassociativity condition.

      The remainder of this section consists of the explicit computations for the case of $\LieD[3]$.  As mentioned in
      \Cref{S:Prelim:CN}, we work over the integers $\ZZ$.  Finding a decomposition map as described above amounts to
      solving systems of linear diophantine equations.  For $\LieD[3]$ these are still manageable by hand; for
      $\LieD[k]$ for $k\geq 4$, a computer can be used to solve them.  To save a bit of space, we work with the
      reduced decomposition map $\rdecomp$ instead of the full decomposition $\decomp$ below.

      \begin{description}[leftmargin=0em]
        \item[Arity \boldmath$n=2$]  The (reduced) decomposition $\rdecomp$ vanishes for degree reasons, i.e.\@ we
          have
          \begin{equation}
            \rdecomp(\LieDgen{2})
              \defeq \rdecomp(\LieDgen{2;1})
              \defeq \rdecomp(\LieDgen{2;1,1})
              \defeq \rdecomp(\LieDgen{2;1,1,1})
              \defeq 0 .
            \label{eq:LieD:l21x}
          \end{equation}
        \item[Arity \boldmath$n=3$]  For $\LieDgen{3}$, we have defined $\rdecomp$ by \cref{eq:LeibK:decomp}, i.e.\@
          \begin{equation}
            \rdecomp(\LieDgen{3}) =
              - \LieDgen{2}\circ(\LieDgen{2},1)
              + \LieDgen{2}\circ(1,\LieDgen{2})
              - \LieDgen{2}\circ(1,\LieDgen{2})^{(12)}  ,
            \label{eq:LieD:l3}
          \end{equation}
          as in $\LeibK(3)$.  To extend the decomposition map $\rdecomp$ to the next degree, we compute for
          $\LieDgen{3;1}$,
          \begin{align*}
            \rdecomp(\diff \LieDgen{3;1}) &= \rdecomp(- \LieDgen{3} - \LieDgen{3}[(12)])
              = - \rdecomp(\LieDgen{3}) - \rdecomp(\LieDgen{3})^{(12)}  \\
            &= - \left(- \LieDgen{2}\circ(\LieDgen{2},1)
                + \LieDgen{2}\circ(1,\LieDgen{2})
                - \LieDgen{2}\circ(1,\LieDgen{2})^{(12)} \right)
              - \left(- \LieDgen{2}\circ(\LieDgen{2},1)
                + \LieDgen{2}\circ(1,\LieDgen{2})
                - \LieDgen{2}\circ(1,\LieDgen{2})^{(12)} \right)^{(12)}  \\
            &= \LieDgen{2}\circ(\LieDgen{2},1) + \LieDgen{2}\circ(\LieDgen{2}[(12)],1)  ,
          \intertext{then solve for a preimage under $\diff$,}
            &= - \LieDgen{2}\circ(\diff \LieDgen{2;1},1) = \diff\big( \LieDgen{2}\circ(\LieDgen{2;1},1) \big)  .
          \end{align*}
          We proceed for $\LieDgen{3;2}$ in the same way,
          \begin{align*}
            \rdecomp(\diff \LieDgen{3;2}) &= \rdecomp(- \LieDgen{3} - \LieDgen{3}[(23)])
              = - \rdecomp(\LieDgen{3}) - \rdecomp(\LieDgen{3})^{(23)}  \\
            &= - \left(- \LieDgen{2}\circ(\LieDgen{2},1)
                + \LieDgen{2}\circ(1,\LieDgen{2})
                - \LieDgen{2}\circ(1,\LieDgen{2})^{(12)} \right)
              - \left(- \LieDgen{2}\circ(\LieDgen{2},1)
                + \LieDgen{2}\circ(1,\LieDgen{2})
                - \LieDgen{2}\circ(1,\LieDgen{2})^{(12)} \right)^{(23)}  \\
            &= \LieDgen{2}\circ(\LieDgen{2},1)
              - \LieDgen{2}\circ(1,\LieDgen{2})
              + \LieDgen{2}\circ(1,\LieDgen{2})^{(12)}
              + \LieDgen{2}[(12)]\circ(1,\LieDgen{2})^{(12)}
              - \LieDgen{2}\circ(1,\LieDgen{2}[(12)])
              + \LieDgen{2}[(12)]\circ(\LieDgen{2},1)  \\
            &= \diff\big(- \LieDgen{2;1}\circ(\LieDgen{2},1)
                - \LieDgen{2}\circ(1,\LieDgen{2;1})
                - \LieDgen{2;1}\circ(1,\LieDgen{2})^{(12)} \big)  .\\
          \end{align*}
          This gives us candidates for the definition of $\rdecomp$ for which the diagram (\ref{eq:decomp:dg})
          commutes.  We define
          \begin{align}
            \rdecomp(\LieDgen{3;1}) &\defeq \LieDgen{2}\circ(\LieDgen{2;1},1) ,
              \label{eq:LieD:l31}  \\
            \rdecomp(\LieDgen{3;2}) &\defeq
              - \LieDgen{2;1}\circ(\LieDgen{2},1)
              - \LieDgen{2}\circ(1,\LieDgen{2;1})
              - \LieDgen{2;1}\circ(1,\LieDgen{2})^{(12)}  .
              \label{eq:LieD:l32}
          \end{align}
          Using these definitions, we continue in the next degree and find
          \begin{align*}
            \rdecomp(\diff \LieDgen{3;1,1}) &= \rdecomp(- \LieDgen{3;1} + \LieDgen{3;1}[(12)])
              = - \rdecomp(\LieDgen{3;1}) + \rdecomp(\LieDgen{3;1})^{(12)}  \\
            &= - \LieDgen{2}\circ(\LieDgen{2;1},1) + \LieDgen{2}\circ(\LieDgen{2;1},1)^{(12)}
              = - \LieDgen{2}\circ(\LieDgen{2;1},1) + \LieDgen{2}\circ(\LieDgen{2;1}[(12)],1)  \\
            &= \LieDgen{2}\circ(\diff \LieDgen{2;1,1},1)
              = \diff\big( - \LieDgen{2}\circ(\LieDgen{2;1,1},1) \big)  ,\\
            \rdecomp(\diff \LieDgen{3;1,2}) &= \rdecomp(\LieDgen{3;1} + \LieDgen{3;1}[(23)] + \LieDgen{3;1}[(132)]
              - \LieDgen{3;2} - \LieDgen{3;2}[(12)] - \LieDgen{3;2}[(123)])  \\
            &= \LieDgen{2}\circ(\LieDgen{2;1},1)
              + \LieDgen{2}\circ(\LieDgen{2;1},1)^{(23)}
              + \LieDgen{2}\circ(\LieDgen{2;1},1)^{(132)}
              - \big(- \LieDgen{2;1}\circ(\LieDgen{2},1)
                - \LieDgen{2}\circ(1,\LieDgen{2;1})
                - \LieDgen{2;1}\circ(1,\LieDgen{2})^{(12)} \big)  \\
              &\quad- \big(- \LieDgen{2;1}\circ(\LieDgen{2},1)
                - \LieDgen{2}\circ(1,\LieDgen{2;1})
                - \LieDgen{2;1}\circ(1,\LieDgen{2})^{(12)} \big)^{(12)}  \\
              &\quad- \big(- \LieDgen{2;1}\circ(\LieDgen{2},1)
                - \LieDgen{2}\circ(1,\LieDgen{2;1})
                - \LieDgen{2;1}\circ(1,\LieDgen{2})^{(12)} \big)^{(123)}  \\
            &= \LieDgen{2}\circ(\LieDgen{2;1},1)
              + \LieDgen{2}[(12)]\circ(1,\LieDgen{2;1})^{(12)}
              + \LieDgen{2}[(12)]\circ(1,\LieDgen{2;1})
              + \LieDgen{2;1}\circ(\LieDgen{2},1)
              + \LieDgen{2}\circ(1,\LieDgen{2;1})
              + \LieDgen{2;1}\circ(1,\LieDgen{2})^{(12)}  \\
              &\quad+ \LieDgen{2;1}\circ(\LieDgen{2}[(12)],1)
              + \LieDgen{2}\circ(1,\LieDgen{2;1})^{(12)}
              + \LieDgen{2;1}\circ(1,\LieDgen{2})
              + \LieDgen{2;1}[(12)]\circ(1,\LieDgen{2}[(12)])^{(12)}
              + \LieDgen{2}[(12)]\circ(\LieDgen{2;1},1)
              + \LieDgen{2;1}\circ(1,\LieDgen{2}[(12)])  \\
            &= - (\diff \LieDgen{2;1})\circ(\LieDgen{2;1},1)
              - (\diff \LieDgen{2;1})\circ(1,\LieDgen{2;1})^{(12)}
              - (\diff \LieDgen{2;1})\circ(1,\LieDgen{2;1})
              - \LieDgen{2;1}\circ(\diff \LieDgen{2;1},1)
              - \LieDgen{2;1}\circ(1,\diff \LieDgen{2;1})^{(12)}  \\
              &\quad- \LieDgen{2;1}\circ(1,\diff \LieDgen{2;1})
              + (\diff \LieDgen{2;1,1})\circ(1,\LieDgen{2})^{(132)}  \\
            &= \diff\big(- \LieDgen{2;1}\circ(\LieDgen{2;1},1)
              - \LieDgen{2;1}\circ(1,\LieDgen{2;1})
              - \LieDgen{2;1}\circ(1,\LieDgen{2;1})^{(12)}
              + \LieDgen{2;1,1}\circ(1,\LieDgen{2})^{(132)} \big)  ,\\
            \rdecomp(\diff \LieDgen{3;2,2}) &= \rdecomp(- \LieDgen{3;2} + \LieDgen{3;2}[(23)])
              = - \rdecomp(\LieDgen{3;2}) + \rdecomp(\LieDgen{3;2})^{(23)}  \\
            &= - \left(- \LieDgen{2;1}\circ(\LieDgen{2},1)
              - \LieDgen{2}\circ(1,\LieDgen{2;1})
              - \LieDgen{2;1}\circ(1,\LieDgen{2})^{(12)} \right)  \\
              &\quad+ \left(- \LieDgen{2;1}\circ(\LieDgen{2},1)
                - \LieDgen{2}\circ(1,\LieDgen{2;1})
                - \LieDgen{2;1}\circ(1,\LieDgen{2})^{(12)} \right)^{(23)}  \\
            &= \LieDgen{2;1}\circ(\LieDgen{2},1)
              + \LieDgen{2}\circ(1,\LieDgen{2;1})
              + \LieDgen{2;1}\circ(1,\LieDgen{2})^{(12)}
              - \LieDgen{2;1}[(12)]\circ(1,\LieDgen{2})^{(12)}
              - \LieDgen{2}\circ(1,\LieDgen{2;1}[(12)])
              - \LieDgen{2;1}[(12)]\circ(\LieDgen{2},1)  \\
            &= - (\diff \LieDgen{2;1,1})\circ(\LieDgen{2},1)
              - \LieDgen{2}\circ(1,\diff \LieDgen{2;1,1})
              - (\diff \LieDgen{2;1,1})\circ(1,\LieDgen{2})^{(12)}  \\
            &= \diff\big(- \LieDgen{2;1,1}\circ(\LieDgen{2},1)
              + \LieDgen{2}\circ(1,\LieDgen{2;1,1})
              - \LieDgen{2;1,1}\circ(1,\LieDgen{2})^{(12)} \big)  .
          \end{align*}
          This again gives us candidates for the definition of $\rdecomp$ and we define
          \begin{align}
            \rdecomp(\LieDgen{3;1,1}) &\defeq - \LieDgen{2}\circ(\LieDgen{2;1,1},1) ,
              \label{eq:LieD:l311}  \\
            \rdecomp(\LieDgen{3;1,2}) &\defeq
              - \LieDgen{2;1}\circ(\LieDgen{2;1},1)
              - \LieDgen{2;1}\circ(1,\LieDgen{2;1})
              - \LieDgen{2;1}\circ(1,\LieDgen{2;1})^{(12)}
              + \LieDgen{2;1,1}\circ(1,\LieDgen{2})^{(132)} ,
              \label{eq:LieD:l312}  \\
            \rdecomp(\LieDgen{3;2,2}) &\defeq
              - \LieDgen{2;1,1}\circ(\LieDgen{2},1)
              + \LieDgen{2}\circ(1,\LieDgen{2;1,1})
              - \LieDgen{2;1,1}\circ(1,\LieDgen{2})^{(12)} .
              \label{eq:LieD:l322}
          \end{align}
        \item[Arity \boldmath$n=4$]  For $\LieDgen{4}$ we have
          \begin{equation}
            \begin{split}
            \rdecomp(\LieDgen{4}) &= \LieDgen{2}\circ(\LieDgen{3},1)
              + \LieDgen{2}\circ(1,\LieDgen{3})
              - \LieDgen{2}\circ(1,\LieDgen{3})^{(12)}
              + \LieDgen{2}\circ(1,\LieDgen{3})^{(123)}
              - \LieDgen{2}\circ(\LieDgen{2},\LieDgen{2})
              + \LieDgen{2}\circ(\LieDgen{2},\LieDgen{2})^{(23)}  \\
              &\quad- \LieDgen{2}\circ(\LieDgen{2},\LieDgen{2})^{(132)}
              + \LieDgen{3}\circ(\LieDgen{2},1,1)
              - \LieDgen{3}\circ(1,\LieDgen{2},1)
              + \LieDgen{3}\circ(1,\LieDgen{2},1)^{(12)}
              + \LieDgen{3}\circ(1,1,\LieDgen{2})
              - \LieDgen{3}\circ(1,1,\LieDgen{2})^{(23)}  \\
              &\quad+ \LieDgen{3}\circ(1,1,\LieDgen{2})^{(132)}  ,
            \end{split} \label{eq:LieD:l4}
          \end{equation}
          and analogous to the arity 3 computations, we obtain (indicating by the ellipses $\dotsb$ terms that cancel
          on the nose)
          \begin{align*}
            \rdecomp(\diff \LieDgen{4;1}) &= \rdecomp(- \LieDgen{4} - \LieDgen{4}[(12)])
              = - \rdecomp(\LieDgen{4}) - \rdecomp(\LieDgen{4})^{(12)}  \\
            &= - \left( \LieDgen{2}\circ(\LieDgen{3},1)
                + \LieDgen{2}\circ(1,\LieDgen{3})^{(123)}
                - \LieDgen{2}\circ(\LieDgen{2},\LieDgen{2})
                + \LieDgen{3}\circ(\LieDgen{2},1,1)
                + \LieDgen{3}\circ(1,1,\LieDgen{2})
                + \dotsb \right)  \\
              &\quad- \left( \LieDgen{2}\circ(\LieDgen{3}[(12)],1)
                + \LieDgen{2}\circ(1,\LieDgen{3}[(12)])^{(123)}
                - \LieDgen{2}\circ(\LieDgen{2}[(12)],\LieDgen{2})
                + \LieDgen{3}\circ(\LieDgen{2}[(12)],1,1)
                + \LieDgen{3}[(12)]\circ(1,1,\LieDgen{2})
                + \dotsb \right)  \\
            &= \LieDgen{2}\circ(\diff \LieDgen{3;1},1)
              + \LieDgen{2}\circ(1,\diff \LieDgen{3;1})^{(123)}
              - \LieDgen{2}\circ(\diff \LieDgen{2;1},\LieDgen{2})
              + \LieDgen{3}\circ(\diff \LieDgen{2;1},1,1)
              + (\diff \LieDgen{3;1})\circ(1,1,\LieDgen{2})  \\
            &= \diff\left(- \LieDgen{2}\circ(\LieDgen{3;1},1)
              - \LieDgen{2}\circ(1,\LieDgen{3;1})^{(123)}
              + \LieDgen{2}\circ(\LieDgen{2;1},\LieDgen{2})
              + \LieDgen{3}\circ(\LieDgen{2;1},1,1)
              + \LieDgen{3;1}\circ(1,1,\LieDgen{2}) \right)  .\\
            \rdecomp(\diff \LieDgen{4;2}) &= \rdecomp(- \LieDgen{4} - \LieDgen{4}[(23)])
              = - \rdecomp(\LieDgen{4}) - \rdecomp(\LieDgen{4})^{(23)}  \\
            &= - \left( \LieDgen{2}\circ(\LieDgen{3},1)
                + \LieDgen{2}\circ(1,\LieDgen{3})
                - \LieDgen{2}\circ(\LieDgen{2},\LieDgen{2})^{(132)}
                + \LieDgen{3}\circ(\LieDgen{2},1,1)
                - \LieDgen{3}\circ(1,\LieDgen{2},1)
                + \LieDgen{3}\circ(1,\LieDgen{2},1)^{(12)} \right.  \\
                &\quad\left.{}+ \LieDgen{3}\circ(1,1,\LieDgen{2})^{(132)}
                + \dotsb \right)
              - \left( \LieDgen{2}\circ(\LieDgen{3}[(23)],1)
                + \LieDgen{2}\circ(1,\LieDgen{3}[(12)])
                - \LieDgen{2}\circ(\LieDgen{2}[(12)],\LieDgen{2})^{(132)}
                + \LieDgen{3}[(12)]\circ(1,\LieDgen{2},1)^{(12)} \right.  \\
                &\quad\left.{}- \LieDgen{3}\circ(1,\LieDgen{2}[(12)],1)
                + \LieDgen{3}[(12)]\circ(\LieDgen{2},1,1)
                + \LieDgen{3}[(12)]\circ(1,1,\LieDgen{2})^{(132)}
                + \dotsb \right)  \\
            &= \LieDgen{2}\circ(\diff \LieDgen{3;2},1)
              + \LieDgen{2}\circ(1,\diff \LieDgen{3;1})
              - \LieDgen{2}\circ(\diff \LieDgen{2;1},\LieDgen{2})^{(132)}
              + (\diff \LieDgen{3;1})\circ(\LieDgen{2},1,1)
              + (\diff \LieDgen{3;1})\circ(1,\LieDgen{2},1)^{(12)}  \\
              &\quad- \LieDgen{3}\circ(1,\diff \LieDgen{2;1},1)
              + (\diff \LieDgen{3;1})\circ(1,1,\LieDgen{2})^{(132)}  \\
            &= \diff\left(- \LieDgen{2}\circ(\LieDgen{3;2},1)
              - \LieDgen{2}\circ(1,\LieDgen{3;1})
              + \LieDgen{2}\circ(\LieDgen{2;1},\LieDgen{2})^{(132)}
              + \LieDgen{3;1}\circ(\LieDgen{2},1,1)
              + \LieDgen{3;1}\circ(1,\LieDgen{2},1)^{(12)} \right.  \\
              &\quad\left.{}- \LieDgen{3}\circ(1,\LieDgen{2;1},1)
              + \LieDgen{3;1}\circ(1,1,\LieDgen{2})^{(132)} \right)  .\\
            \rdecomp(\diff \LieDgen{4;3}) &= \rdecomp(- \LieDgen{4} - \LieDgen{4}[(34)])
              = - \rdecomp(\LieDgen{4}) - \rdecomp(\LieDgen{4})^{(34)}  \\
            &= - \left( \LieDgen{2}\circ(\LieDgen{3},1)
                + \LieDgen{2}\circ(1,\LieDgen{3})
                - \LieDgen{2}\circ(1,\LieDgen{3})^{(12)}
                + \LieDgen{2}\circ(1,\LieDgen{3})^{(123)}
                - \LieDgen{2}\circ(\LieDgen{2},\LieDgen{2})
                + \LieDgen{2}\circ(\LieDgen{2},\LieDgen{2})^{(23)} \right.  \\
                &\quad\left.{}- \LieDgen{2}\circ(\LieDgen{2},\LieDgen{2})^{(132)}
                + \LieDgen{3}\circ(\LieDgen{2},1,1)
                - \LieDgen{3}\circ(1,\LieDgen{2},1)
                + \LieDgen{3}\circ(1,\LieDgen{2},1)^{(12)}
                + \LieDgen{3}\circ(1,1,\LieDgen{2})
                - \LieDgen{3}\circ(1,1,\LieDgen{2})^{(23)} \right. \\
                &\quad\left.{}+ \LieDgen{3}\circ(1,1,\LieDgen{2})^{(132)}
                + \dotsb \right)
              - \left( \LieDgen{2}[(12)]\circ(1,\LieDgen{3})^{(123)}
                + \LieDgen{2}\circ(1,\LieDgen{3}[(23)])
                - \LieDgen{2}\circ(1,\LieDgen{3}[(23)])^{(12)}
                + \LieDgen{2}[(12)]\circ(\LieDgen{3},1) \right.  \\
                &\quad\left.{}- \LieDgen{2}\circ(\LieDgen{2},\LieDgen{2}[(12)])
                - \LieDgen{2}[(12)]\circ(\LieDgen{2},\LieDgen{2})^{(132)}
                + \LieDgen{2}[(12)]\circ(\LieDgen{2},\LieDgen{2})^{(23)}
                + \LieDgen{3}[(23)]\circ(\LieDgen{2},1,1)
                - \LieDgen{3}[(23)]\circ(1,1,\LieDgen{2})^{(23)} \right.  \\
                &\quad\left.{}+ \LieDgen{3}[(23)]\circ(1,1,\LieDgen{2})^{(132)}
                + \LieDgen{3}\circ(1,1,\LieDgen{2}[(12)])
                - \LieDgen{3}[(23)]\circ(1,\LieDgen{2},1)
                + \LieDgen{3}[(23)]\circ(1,\LieDgen{2},1)^{(12)}
                + \dotsb \right)  \\
            &= (\diff \LieDgen{2;1})\circ(1,\LieDgen{3})^{(123)}
              + \LieDgen{2}\circ(1,\diff \LieDgen{3;2})
              - \LieDgen{2}\circ(1,\diff \LieDgen{3;2})^{(12)}
              + (\diff \LieDgen{2;1})\circ(\LieDgen{3},1)
              - \LieDgen{2}\circ(\LieDgen{2},\diff \LieDgen{2;1}) \\
              &\quad- (\diff \LieDgen{2;1})\circ(\LieDgen{2},\LieDgen{2})^{(132)}
              + (\diff \LieDgen{2;1})\circ(\LieDgen{2},\LieDgen{2})^{(23)}
              + (\diff \LieDgen{3;2})\circ(\LieDgen{2},1,1)
              - (\diff \LieDgen{3;2})\circ(1,1,\LieDgen{2})^{(23)} \\
              &\quad+ (\diff \LieDgen{3;2})\circ(1,1,\LieDgen{2})^{(132)}
              + \LieDgen{3}\circ(1,1,\diff \LieDgen{2;1})
              - (\diff \LieDgen{3;2})\circ(1,\LieDgen{2},1)
              + (\diff \LieDgen{3;2})\circ(1,\LieDgen{2},1)^{(12)}  \\
            &= \diff\left( \LieDgen{2;1}\circ(1,\LieDgen{3})^{(123)}
                - \LieDgen{2}\circ(1,\LieDgen{3;2})
                + \LieDgen{2}\circ(1,\LieDgen{3;2})^{(12)}
                + \LieDgen{2;1}\circ(\LieDgen{3},1)
                - \LieDgen{2}\circ(\LieDgen{2},\LieDgen{2;1})
                - \LieDgen{2;1}\circ(\LieDgen{2},\LieDgen{2})^{(132)} \right.  \\
                &\quad+ \LieDgen{2;1}\circ(\LieDgen{2},\LieDgen{2})^{(23)}
                + \LieDgen{3;2}\circ(\LieDgen{2},1,1)
                - \LieDgen{3;2}\circ(1,1,\LieDgen{2})^{(23)}
                + \LieDgen{3;2}\circ(1,1,\LieDgen{2})^{(132)}
                + \LieDgen{3}\circ(1,1,\LieDgen{2;1})  \\
                &\quad\left.{}- \LieDgen{3;2}\circ(1,\LieDgen{2},1)
                + \LieDgen{3;2}\circ(1,\LieDgen{2},1)^{(12)} \right)  .\\
          \end{align*}
          As before, this gives us candidates for the definition of $\rdecomp$.  We define
          \begin{align}
            \rdecomp(\LieDgen{4;1}) &\defeq - \LieDgen{2}\circ(\LieDgen{3;1},1)
              - \LieDgen{2}\circ(1,\LieDgen{3;1})^{(123)}
              + \LieDgen{2}\circ(\LieDgen{2;1},\LieDgen{2})
              + \LieDgen{3}\circ(\LieDgen{2;1},1,1)
              + \LieDgen{3;1}\circ(1,1,\LieDgen{2})
              \label{eq:LieD:l41}  \\
            \begin{split}
              \rdecomp(\LieDgen{4;2}) &\defeq
                - \LieDgen{2}\circ(\LieDgen{3;2},1)
                - \LieDgen{2}\circ(1,\LieDgen{3;1})
                + \LieDgen{2}\circ(\LieDgen{2;1},\LieDgen{2})^{(132)}
                + \LieDgen{3;1}\circ(\LieDgen{2},1,1)
                + \LieDgen{3;1}\circ(1,\LieDgen{2},1)^{(12)}  \\
                &\quad- \LieDgen{3}\circ(1,\LieDgen{2;1},1)
                + \LieDgen{3;1}\circ(1,1,\LieDgen{2})^{(132)}
            \end{split} \label{eq:LieD:l42}  \\
            \begin{split}
              \rdecomp(\LieDgen{4;3}) &\defeq \LieDgen{2;1}\circ(1,\LieDgen{3})^{(123)}
                - \LieDgen{2}\circ(1,\LieDgen{3;2})
                + \LieDgen{2}\circ(1,\LieDgen{3;2})^{(12)}
                + \LieDgen{2;1}\circ(\LieDgen{3},1)
                - \LieDgen{2}\circ(\LieDgen{2},\LieDgen{2;1})  \\
                &\quad- \LieDgen{2;1}\circ(\LieDgen{2},\LieDgen{2})^{(132)}
                + \LieDgen{2;1}\circ(\LieDgen{2},\LieDgen{2})^{(23)}
                + \LieDgen{3;2}\circ(\LieDgen{2},1,1)
                - \LieDgen{3;2}\circ(1,1,\LieDgen{2})^{(23)}
                + \LieDgen{3;2}\circ(1,1,\LieDgen{2})^{(132)}  \\
                &\quad+ \LieDgen{3}\circ(1,1,\LieDgen{2;1})
                - \LieDgen{3;2}\circ(1,\LieDgen{2},1)
                + \LieDgen{3;2}\circ(1,\LieDgen{2},1)^{(12)}  .
            \end{split} \label{eq:LieD:l43}
          \end{align}
        \item[Arity \boldmath$n=5$]  We have already defined the decomposition map for $\LieDgen{5}$.  Explicitly, it
          is given by
          \begin{equation}
            \begin{split}
              \rdecomp(\LieDgen{5}) &= - \LieDgen{2}\circ(\LieDgen{4},1)
                + \LieDgen{2}\circ(\LieDgen{3},\LieDgen{2})
                - \LieDgen{2}\circ(\LieDgen{3},\LieDgen{2})^{(34)}
                + \LieDgen{2}\circ(\LieDgen{3},\LieDgen{2})^{(243)}
                - \LieDgen{2}\circ(\LieDgen{3},\LieDgen{2})^{(1432)}
                - \LieDgen{2}\circ(\LieDgen{2},\LieDgen{3})  \\
                &\quad+ \LieDgen{2}\circ(\LieDgen{2},\LieDgen{3})^{(23)}
                - \LieDgen{2}\circ(\LieDgen{2},\LieDgen{3})^{(234)}
                - \LieDgen{2}\circ(\LieDgen{2},\LieDgen{3})^{(132)}
                + \LieDgen{2}\circ(\LieDgen{2},\LieDgen{3})^{(1342)}
                - \LieDgen{2}\circ(\LieDgen{2},\LieDgen{3})^{(13)(24)}  \\
                &\quad+ \LieDgen{2}\circ(1,\LieDgen{4})
                - \LieDgen{2}\circ(1,\LieDgen{4})^{(12)}
                + \LieDgen{2}\circ(1,\LieDgen{4})^{(123)}
                - \LieDgen{2}\circ(1,\LieDgen{4})^{(1234)}
                + \LieDgen{3}\circ(\LieDgen{3},1,1)
                - \LieDgen{3}\circ(\LieDgen{2},\LieDgen{2},1)  \\
                &\quad+ \LieDgen{3}\circ(\LieDgen{2},\LieDgen{2},1)^{(23)}
                - \LieDgen{3}\circ(\LieDgen{2},\LieDgen{2},1)^{(132)}
                + \LieDgen{3}\circ(\LieDgen{2},1,\LieDgen{2})
                + \LieDgen{3}\circ(\LieDgen{2},1,\LieDgen{2})^{(243)}
                - \LieDgen{3}\circ(\LieDgen{2},1,\LieDgen{2})^{(34)}  \\
                &\quad+ \LieDgen{3}\circ(1,\LieDgen{3},1)
                - \LieDgen{3}\circ(1,\LieDgen{3},1)^{(12)}
                + \LieDgen{3}\circ(1,\LieDgen{3},1)^{(123)}
                - \LieDgen{3}\circ(1,\LieDgen{2},\LieDgen{2})
                + \LieDgen{3}\circ(1,\LieDgen{2},\LieDgen{2})^{(34)}  \\
                &\quad- \LieDgen{3}\circ(1,\LieDgen{2},\LieDgen{2})^{(243)}
                + \LieDgen{3}\circ(1,\LieDgen{2},\LieDgen{2})^{(12)}
                - \LieDgen{3}\circ(1,\LieDgen{2},\LieDgen{2})^{(12)(34)}
                + \LieDgen{3}\circ(1,\LieDgen{2},\LieDgen{2})^{(1243)}
                - \LieDgen{3}\circ(1,\LieDgen{2},\LieDgen{2})^{(143)}  \\
                &\quad+ \LieDgen{3}\circ(1,\LieDgen{2},\LieDgen{2})^{(1432)}
                + \LieDgen{3}\circ(1,1,\LieDgen{3})
                - \LieDgen{3}\circ(1,1,\LieDgen{3})^{(23)}
                + \LieDgen{3}\circ(1,1,\LieDgen{3})^{(234)}
                + \LieDgen{3}\circ(1,1,\LieDgen{3})^{(132)}  \\
                &\quad- \LieDgen{3}\circ(1,1,\LieDgen{3})^{(1342)}
                + \LieDgen{3}\circ(1,1,\LieDgen{3})^{(13)(24)}
                - \LieDgen{4}\circ(\LieDgen{2},1,1,1)
                + \LieDgen{4}\circ(1,\LieDgen{2},1,1)
                - \LieDgen{4}\circ(1,\LieDgen{2},1,1)^{(12)}  \\
                &\quad- \LieDgen{4}\circ(1,1,\LieDgen{2},1)
                + \LieDgen{4}\circ(1,1,\LieDgen{2},1)^{(23)}
                - \LieDgen{4}\circ(1,1,\LieDgen{2},1)^{(132)}
                + \LieDgen{4}\circ(1,1,1,\LieDgen{2})
                - \LieDgen{4}\circ(1,1,1,\LieDgen{2})^{(34)}  \\
                &\quad+ \LieDgen{4}\circ(1,1,1,\LieDgen{2})^{(243)}
                - \LieDgen{4}\circ(1,1,1,\LieDgen{2})^{(1432)}  .
            \end{split}
            \label{eq:LieD:l5}
          \end{equation}
          Since there are no higher degree terms in $\LieD[3](5)$, we are done here.
      \end{description}

      \begin{lm} \label{lm:LieD:coopd}
        The triple $(\LieD[3],\diff,\decomp)$ consisting of the dg $\Sy$-module $(\LieD[3],\diff)$ defined in
        \cref{eq:LieD:cx} and the decomposition map as defined by
        \cref{eq:LieD:l21x,eq:LieD:l3,eq:LieD:l31,eq:LieD:l32,eq:LieD:l311,eq:LieD:l312,eq:LieD:l322,eq:LieD:l4,eq:LieD:l41,eq:LieD:l42,eq:LieD:l43,eq:LieD:l5}
        is a dg cooperad.
      \end{lm}

      The decomposition structure map we defined is compatible with the differential by its construction.  What is
      left to do, in order to show that it defines a dg cooperad structure on the dg $\Sy$-module $\LieD[3]$ defined
      in the previous section, is to check that it satisfies the coassociativity condition
      $(\decomp\circ\id)\decomp=(\id\circ\decomp)\decomp$.  Note that coassociativity is automatic in arities $n\leq
      3$ and we already know that it holds for $\LieDgen{n} \in \LieD[3](n)_{n-1} = \LeibK(n)$.  Thus, it is
      sufficient to check it for the elements $\LieDgen{4;1}$, $\LieDgen{4;2}$, and $\LieDgen{4;3}$.  See
      \Cref{A:comp:coass} for this long and tedious computation.

    \subsection{The twisted composite product} \label{S:LieD:KosCx}
      As explained in the introduction to this section, if we could show that for an $\Sy$-free resolution
      $\psi\colon \LieD \to \LieK$ of dg cooperads the map $\COBAR\psi$ is a quasi-isomorphism, this would give us a
      cofibrant resolution of the Lie operad over $\ZZ$.  One way to do this would be to show, that the twisted
      composite product $\LieD\circ_{\tilde\kappa}\Lie$ is acyclic for $\tilde\kappa = \kappa\circ\psi$.  Since we do
      not have the full dg cooperad $\LieD$, we will show the following truncated statement.  This is of course a
      neccessary condition for our dg cooperad $\LieD[3]$ to be a truncation of an $\Sy$-free resolution $\LieD$ with
      the desired properties.

      \begin{pp}\label{pp:KosCx:acyclic}
        The twisted composite product $\LieD[3]\circ_{\tilde\kappa}\Lie$ satisfies
        \begin{equation*}
          \Hm_r\big((\LieD[3]\circ_{\tilde\kappa}\Lie)(n)\big) = 0 ,
        \end{equation*}
        for all $r \leq 3$ in all arities $n$.
      \end{pp}

      \begin{proof}
        Since the result actually holds for any $\LieD[k]$ constructed as in the previous sections, we phrase the
        proof for arbitrary $k$ instead of just $k=3$.

        Consider the bigrading on the composite product $\LieD[k]\circ\Lie$ given by
        \begin{equation*}
          \big(\LieD[k]\circ\Lie\big)(n)_{p,q}
          = \Big( \LieD[k](p+1) \tensor_{\Sy_{p+1}} \big( \Lie^{\tensor p+1} \big)(n) \Big)_{p+q}  .
        \end{equation*}
        The differential of this composite product, $\diff^{\LieD[k]\circ\Lie} = \diff^{\LieD[k]} \circ 1$, is of
        bidegree $(0,-1)$.  On the twisted composite product $\LieD[k]\circ_{\tilde\kappa}\Lie$, the differential has
        another term, $\diff_{\tilde\kappa}$, defined as in \cref{eq:TwDiff}.  Note that $\diff_{\tilde\kappa}$ is of
        bidegree $(-1,0)$, since $\tilde\kappa \defeq \kappa\circ\psi$ vanishes everywhere except on $\LieD[k](2)_1$.
        Thus, with the above bigrading, the twisted composite product becomes a first quadrant bicomplex.  Below, we
        consider its spectral sequence $E(n)$ for each arity $n$.

        Since $\Lie(0) = 0$, the action of $\Sy_{p+1}$ on $\Lie^{\tensor p+1}(n)$ is free and hence $E(n)^0_{p,q}$
        admits the following expansion,
        \begin{align*}
          E(n)^0_{p,q} &= \big(\LieD[k]\circ\Lie\big)(n)_{p,q}  \\
          &= \Big( \LieD[k](p+1) \tensor_{\Sy_{p+1}} \big( \Lie^{\tensor p+1} \big)(n) \Big)_{p+q}  \\
          &= \left( \Dsum_{n=k_1+\dotsb+k_{p+1}} \LieD[k](p+1) \tensor_{\Sy_{p+1}} \left(
              \mathrm{Ind}_{\Sy_{k_1}\times\dotsb\times\Sy_{k_{p+1}}}^{\Sy_n}
                \Lie(k_1)\tensor\dotsb\tensor\Lie(k_{p+1})
            \right) \right)_{p+q}  \\
          &= \left( \Dsum_{n=k_1+\dotsb+k_m} \LieD[k](p+1) \tensor
            \Lie(k_1)\tensor\dotsb\tensor\Lie(k_{p+1}) \tensor \Fk[\Sh(k_1,\dotsc,k_{p+1})] \right)_{p+q}  .
        \end{align*}
        Since $\Lie$ is $\Fk$-projective, i.e.\@ $\Lie(n)$ are projective $\Fk$-modules for all arities $n$, this
        implies that the first page $E(n)^1_{p,q}$ is given by
        \begin{align*}
          E(n)^1_{p,q}
          &= \Hm_{p+q}\Big( \LieD[k](p+1) \tensor_{\Sy_{p+1}} \big( \Lie^{\tensor p+1} \big)(n) ,
              \diff^{\LieD[k]} \circ 1 \Big)  \\
          &= \Hm_{p+q}\big( \LieD[k](p+1) , \diff^{\LieD[k]} \big)
              \tensor_{\Sy_{p+1}} \big( \Lie^{\tensor p+1} \big)(n)  .
        \end{align*}
        By the construction of \Cref{S:LieD:SMod}, we have for $n \leq k+1$:
        \begin{align*}
          \Hm_r\big(\LieD[k](n)\big) = \begin{cases}
              \LieK(n)_r    & \textrm{if $r \leq k$,}  \\
              \ker\diff_r   & \textrm{if $r = k+1$,}  \\
              0             & \textrm{otherwise.}
            \end{cases}
        \end{align*}
        Hence, for $p+q \leq k$ we obtain
        \begin{equation*}
          E(n)^1_{p,q}
            = \Big( \LieK(p+1) \tensor_{\Sy_{p+1}} \big( \Lie^{\tensor p+1} \big)(n) \Big)_{p+q}
            = \begin{cases}
                \big( \LieK \circ \Lie \big)(n)_{p} & \textrm{if $q=0$,}  \\
                0 & \textrm{otherwise.}
              \end{cases}
        \end{equation*}
        Since $\psi$ is a morphism of dg cooperads and $\tilde\kappa = \kappa\circ\psi$, it follows from the
        definition of the twisted differential in \cref{eq:TwDiff}, that the following diagram commutes:
        \begin{equation*}
          \begin{tikzcd}
            \LieD[k]\circ\Lie \ar[r,"\diff^{\tilde\kappa}"] \ar[d,"\psi\circ\id"]
            & \LieD[k]\circ\Lie \ar[d,"\psi\circ\id"]  \\
            \LieK\circ\Lie \ar[r,"\diff_{\kappa}"]
            & \LieK\circ\Lie  .
          \end{tikzcd}
        \end{equation*}
        Since $\psi$ is a quasi-isomorphism, this shows that $\diff^1_{p,0} = (\diff_\kappa)_p$ for $p \leq k+1$.
        Thus we obtain $E^2_{p,q} = 0$ for $p+q \leq k$, since the operad $\Lie$ is Koszul.

        The proof for $n > k+1$ is essentially the same, except for the computation of $E(n)^2_{k,0}$.  We have
        \begin{equation*}
          \Hm_r\big(\LieD[k](k+2)\big) = \LieD[k](k+2)_r = \begin{cases}
              \LieD[k](k+2)_{k+1} & \textrm{if $r=k+1$,}  \\
              0 & \textrm{otherwise,}
            \end{cases}
        \end{equation*}
        and hence $\diff^1_{k+1,0} \neq (\diff_\kappa)_{k+1}$.
        However, we do find
        \begin{align*}
          \im\left( E(n)^1_{k+1,0} \xto{\diff^1_{k+1,0}} E(n)^1_{k,0} \right)
          &= \im\left( \big(\LieD[k]\circ\Lie\big)(n)_{k+1}
            \xto{(\psi\circ\id)\circ\diff_{\tilde\kappa}} \big(\LieK\circ\Lie\big)(n)_k \right)  \\
          &= \im\left( \big(\LieK\circ\Lie\big)(n)_{k+1}
            \xto{\diff_\kappa} \big(\LieK\circ\Lie\big)(n)_k \right)  ,
        \end{align*}
        since $\psi$ is surjective.  Thus we obtain $E^2_{p,q} = 0$ for $p+q \leq k$ again.
      \end{proof}

  \section{The category of weak Lie 3-algebras} \label{S:ELie3}
    Assume again that we had an $\Sy$-free resolution $\psi\colon \LieD \to \LieK$ of dg cooperads, and in addition
    that $\LieD\circ_{(\kappa\circ\psi)}\Lie$ is acyclic and therefore $\ELinf = \COBAR\LieD \qito \Lie$ is a
    cofibrant resolution.  Now consider a 3-term complex
    $(L,\diff) = L_0 \xleftarrow{\diff} L_1 \xleftarrow{\diff} L_2$ and note that its endomorphism operad $\End_L$
    vanishes in degrees $r > 2$.  Since the Maurer--Cartan equation for a twisting morphism
    $\LieDtw\colon \LieD \to \End_L$ is of degree $-2$, only degrees $\leq 4$ of $\LieD$ play a role in the definition
    of 3-term $\ELinf$-algebras.  A homotopy transfer theorem for such 3-term $\ELinf$-algebras holds as a special
    case of the general HTT for algebras over a cofibrant operad.  For our resolution $\LieD[3]$, however, we do not
    know that it is the truncation of such a dg cooperad $\LieD$.  Nonetheless we show constructively that our weak
    Lie 3-algebras satisfy a homotopy transfer theorem (\Cref{pp:ELie3:HTT}.)

    This section is organized as follows.  In \Cref{S:ELie3:htyLeib}, we recall the definition of a homotopy Leibniz
    algebra and of its homotopy morphisms.  We spell out the details for the case of a homotopy Leibniz algebra on a
    3-term complex or Leibniz 3-algebra, and for morphisms between such.  In \Cref{S:ELie3:Alg}, we introduce the
    notion of a weak Lie 3-algebra as extra structure on a Leibniz 3-algebra.  In \Cref{S:ELie3:Mor}, the accompanying
    notion of morphisms is made explicit.  Finally, in \Cref{S:ELie3:HTT}, we prove a version of the homotopy transfer
    theorem for weak Lie 3-algebras.

    \subsection{Homotopy Leibniz algebras} \label{S:ELie3:htyLeib}
      From the description of the Koszul dual cooperad $\LeibK$ in \Cref{S:LieD:LieKLeibK}, we obtain the following
      explicit definitions of algebras and morphisms over $\Leibinf = \COBAR\LeibK$ via the Maurer--Cartan
      \cref{eq:Tw,eq:htymor:MCE}.  For a more thorough exposition, we refer the reader to
      \cite{khudaverdyan2014infinity}.

      \begin{df} %\label{df:Leibinf}
        A \emph{homotopy Leibniz algebra} or \emph{$\Leibinf$-algebra} $(L,\diff,\LeibKtw)$ consists of a chain
        complex $(L,\diff)$ with structure maps
        \begin{align*}
          &\LeibKstr{n}\colon L^{\tensor n} \to L[n-2] , && \forall n\geq 2 ,
          %\label{eq:Leibinf:str}
        \end{align*}
        satisfying the following generalized Jacobi identities:
        \begin{align*}
          \partial(\LeibKstr{n}) &= - \sum_{i+j=n+1} (-1)^{(j-1)i}
            \sum_{p=1}^j (-1)^{(p-1)(i-1)}
            \sum_{\sigma\in\ruSh(p-1,i)} (-1)^{|\sigma|} \cdot (\LeibKstr{j} \tensor_p \LeibKstr{i})^\sigma ,
          && \forall n\geq 3 .
          %\label{eq:Leibinf:MCE}
        \end{align*}

        Let $(L,\diff,\LeibKtw)$, $(L',\diff',\LeibKtw')$ be $\Leibinf$-algebras.  A \emph{homotopy morphism} or
        \emph{$\Leibinf$-morphism} $f\colon L \to L'$ consists of maps
        \begin{align*}
          &f_n\colon L^{\tensor n} \to L'[n-1] , && \forall n\geq 1 ,
          %\label{eq:Leibinf:mor:str}
        \end{align*}
        satisfying the following equations:
        \begin{align*}
          \begin{split}
          \partial(f_n)
            &= \sum_{i+j=n+1} (-1)^{(j-1)i} \sum_{p=1}^j (-1)^{(p-1)(i-1)}
              \sum_{\sigma\in\ruSh(p-1,i)} (-1)^{|\sigma|} \cdot (f_j \tensor_p \LeibKstr{i})^\sigma  \\
            &\quad- \sum_{\substack{1\leq j\leq n\\i_1+\dotsb+i_j=n}}
                (-1)^{(j-1)(n-j)} \cdot (-1)^{\sum_{p=1}^j (p-1)(i_p-1)}
              \sum_{\sigma\in\ruSh(i_1,\dotsc,i_j)}
                (-1)^{|\sigma|} \cdot \LeibKstr{j}[\prime]\circ(f_{i_1},\dotsc,f_{i_j})^\sigma  .
          \end{split}
          %\label{eq:Leibinf:mor:MCE}
        \end{align*}
      \end{df}

      \subsubsection{Leibniz 3-algebras}  A Leibniz 3-algebra is just a homotopy Leibniz algebra on a 3-term complex.
      Since this is the foundation for our definition of weak Lie 3-algebra, we spell out the definition here.

      \begin{df} %\label{df:Leib3}
        A \emph{Leibniz 3-algebra} $(L,\diff,\LeibKtw)$ consists of a 3-term chain complex
        $(L,\diff) = L_0 \xleftarrow{\diff} L_1 \xleftarrow{\diff} L_2$, equipped with structure maps
        \begin{align*}
          &\LeibKstr{2}\colon L^{\tensor 2} \to L ,
          &&\LeibKstr{3}\colon L^{\tensor 3} \to L[1] ,
          &&\LeibKstr{4}\colon L^{\tensor 4} \to L[2] ,
        \end{align*}
        satisfying the following generalized Jacobi identities:
        \begin{align}
          \dEnd(\LeibKstr{2})  &=  0,
            \label{eq:Leib3:l2}  \\
          \dEnd(\LeibKstr{3})  &=
            \LeibKstr{2}\circ_2\LeibKstr{2} - \LeibKstr{2}\circ_1\LeibKstr{2}
            - (\LeibKstr{2}\circ_2\LeibKstr{2})^{(12)}  ,
            \label{eq:Leib3:l3}  \\
          \begin{split}
            \dEnd(\LeibKstr{4}) &= \LeibKstr{2}\circ_1\LeibKstr{3} + \LeibKstr{2}\circ_2\LeibKstr{3}
              - (\LeibKstr{2}\circ_2\LeibKstr{3})^{(12)}
              + (\LeibKstr{2}\circ_2\LeibKstr{3})^{(123)} - \LeibKstr{3}\circ_1\LeibKstr{2}  \\
              &\quad+ \LeibKstr{3}\circ_2\LeibKstr{2} - (\LeibKstr{3}\circ_2\LeibKstr{2})^{(12)}
              - \LeibKstr{3}\circ_3\LeibKstr{2} + (\LeibKstr{3}\circ_3\LeibKstr{2})^{(23)}
              - (\LeibKstr{3}\circ_3\LeibKstr{2})^{(132)}  ,
          \end{split} \label{eq:Leib3:l4}  \\
          \begin{split}
            0 &= \LeibKstr{2}\circ_1\LeibKstr{4} - \LeibKstr{2}\circ_2\LeibKstr{4}
              + (\LeibKstr{2}\circ_2\LeibKstr{4})^{(12)}
              - (\LeibKstr{2}\circ_2\LeibKstr{4})^{(123)} + (\LeibKstr{2}\circ_2\LeibKstr{4})^{(1234)}  \\
              &\quad+ \LeibKstr{3}\circ_1\LeibKstr{3} + \LeibKstr{3}\circ_2\LeibKstr{3}
              - (\LeibKstr{3}\circ_2\LeibKstr{3})^{(12)}
              + (\LeibKstr{3}\circ_2\LeibKstr{3})^{(123)} + \LeibKstr{3}\circ_3\LeibKstr{3}  \\
              &\quad- (\LeibKstr{3}\circ_3\LeibKstr{3})^{(23)} + (\LeibKstr{3}\circ_3\LeibKstr{3})^{(132)}
              + (\LeibKstr{3}\circ_3\LeibKstr{3})^{(234)} - (\LeibKstr{3}\circ_3\LeibKstr{3})^{(1342)}
              + (\LeibKstr{3}\circ_3\LeibKstr{3})^{(13)(24)}  \\
              &\quad+ \LeibKstr{4}\circ_1\LeibKstr{2} - \LeibKstr{4}\circ_2\LeibKstr{2}
              + (\LeibKstr{4}\circ_2\LeibKstr{2})^{(12)}
              + \LeibKstr{4}\circ_3\LeibKstr{2} - (\LeibKstr{4}\circ_3\LeibKstr{2})^{(23)}  \\
              &\quad+ (\LeibKstr{4}\circ_3\LeibKstr{2})^{(132)} - \LeibKstr{4}\circ_4\LeibKstr{2}
              + (\LeibKstr{4}\circ_4\LeibKstr{2})^{(34)} - (\LeibKstr{4}\circ_4\LeibKstr{2})^{(243)}
              + (\LeibKstr{4}\circ_4\LeibKstr{2})^{(1432)}  .
          \end{split} \label{eq:Leib3:l5}
        \end{align}
      \end{df}

      \subsubsection{Morphisms of Leibniz 3-algebras}  A morphism of Leibniz 3-algebras is just a homotopy morphism
      between 3-term homotopy Leibniz algebras.

      \begin{df}
        Let $(L,\diff,\LeibKtw)$, $(L',\diff',\LeibKtw')$ be Leibniz 3-algebras.  A \emph{weak morphism}
        $f\colon L \to L'$ consists of maps
        \begin{align}
          &f_1\colon L \to L' ,
          &&f_2\colon L^{\tensor 2} \to L'[1] ,
          &&f_3\colon L^{\tensor 3} \to L'[2] ,
        \end{align}
        satisfying the following equations,
        \begin{align}
          \dEnd(f_1) &= 0 ,
            \label{eq:Leib3:f1}  \\
          \dEnd(f_2) &= f_1\circ\LeibKstr{2} - \LeibKstr{2}[\prime]\circ(f_1,f_1) ,
            \label{eq:Leib3:f2}  \\
          \begin{split}
            \dEnd(f_3) &= f_1\circ\LeibKstr{3}
              - f_2\circ_2\LeibKstr{2} + f_2\circ_1\LeibKstr{2} + (f_2\circ_2\LeibKstr{2})^{(12)}  \\
              &\quad- \LeibKstr{3}[\prime]\circ(f_1,f_1,f_1)
              - \LeibKstr{2}[\prime]\circ(f_1,f_2) + \LeibKstr{2}[\prime]\circ(f_2,f_1) + \LeibKstr{2}[\prime]\circ(f_1,f_2)^{(12)} ,
          \end{split} \label{eq:Leib3:f3}  \\
          \begin{split}
            f_1\circ\LeibKstr{4} - \LeibKstr{4}[\prime]\circ(f_1,f_1,f_1,f_1)
            &= f_2\circ_1\LeibKstr{3} + f_2\circ_2\LeibKstr{3} - (f_2\circ_2\LeibKstr{3})^{(12)}
              + (f_2\circ_2\LeibKstr{3})^{(123)} - f_3\circ_1\LeibKstr{2}  \\
              &\quad+ f_3\circ_2\LeibKstr{2} - (f_3\circ_2\LeibKstr{2})^{(12)} - f_3\circ_3\LeibKstr{2}
              + (f_3\circ_3\LeibKstr{2})^{(23)} - (f_3\circ_3\LeibKstr{2})^{(132)}  \\
              &\quad+ \LeibKstr{2}[\prime]\circ(f_3,f_1) + \LeibKstr{2}[\prime]\circ(f_1,f_3) - \LeibKstr{2}[\prime]\circ(f_1,f_3)^{(12)}
              + \LeibKstr{2}[\prime]\circ(f_1,f_3)^{(123)}  \\
              &\quad- \LeibKstr{2}[\prime]\circ(f_2,f_2) + \LeibKstr{2}[\prime]\circ(f_2,f_2)^{(23)} - \LeibKstr{2}[\prime]\circ(f_2,f_2)^{(132)}
              + \LeibKstr{3}[\prime]\circ(f_2,f_1,f_1)  \\
              &\quad- \LeibKstr{3}[\prime]\circ(f_1,f_2,f_1) + \LeibKstr{3}[\prime]\circ(f_1,f_2,f_1)^{(12)}
              + \LeibKstr{3}[\prime]\circ(f_1,f_1,f_2)  \\
              &\quad- \LeibKstr{3}[\prime]\circ(f_1,f_1,f_2)^{(23)} + \LeibKstr{3}[\prime]\circ(f_1,f_1,f_2)^{(132)} .
          \end{split} \label{eq:Leib3:f4}
        \end{align}

        Let $L \xto{f} L' \xto{f'} L''$ be weak morphisms of Leibniz 3-algebras.  Their composition is defined by the
        following components:
        \begin{align}
          (f'\circ f)_1 &\defeq f'_1\circ f_1 ,
          \label{eq:Leib3:ff1}  \\
          (f'\circ f)_2 &\defeq f'_2\circ(f_1,f_1) + f'_1\circ f_2 ,
          \label{eq:Leib3:ff2}  \\
          (f'\circ f)_3 &\defeq f'_3\circ(f_1,f_1,f_1) - f'_2\circ(f_2,f_1) + f'_2\circ(f_1,f_2)
            - f'_2\circ(f_1,f_2)^{(12)} + f'_1\circ f_3 .
          \label{eq:Leib3:ff3}
        \end{align}
      \end{df}

    \subsection{Weak Lie 3-algebras} \label{S:ELie3:Alg}
      In this section, we spell out the definition of weak Lie 3-algebras as solutions to the Maurer--Cartan
      \cref{eq:Tw} on a 3-term complex.  To do this, we evaluate the Maurer--Cartan equation for a twisting morphism
      $\LieDtw\colon \LieD[3] \to \End_L$ on the $\Fk[\Sy_n]$-generators introduced in \Cref{S:LieD:SMod}, and use the
      shorthand notation $\LieDstr{*}$ for $\LieDtw(\LieDgen{*})$.  E.g.\@ for $\LieDgen{3;1}$, we find
      \begin{align*}
        0 = \big(\partial\LieDtw+\LieDtw\star\LieDtw\big)(\LieDgen{3;1})
        &= \dEnd\big(\LieDtw(\LieDgen{3;1})\big) + \LieDtw(\diff\LieDgen{3;1})
          + \big(\gamma_{(1)}\circ(\LieDtw\pcirc\LieDtw)\circ\pdecomp\big)(\LieDgen{3;1})  \\
        &= \dEnd(\LieDstr{3;1}) + \LieDtw\big(- \LieDgen{3} - \LieDgen{3}[(12)]\big)
          + \big(\gamma_{(1)}\circ(\LieDtw\pcirc\LieDtw)\big)(\LieDgen{2}\circ_1\LieDgen{2;1})  \\
        &= \dEnd(\LieDstr{3;1}) - \LieDstr{3} - \LieDstr{3}[(12)] - \LieDstr{2}\circ_1\LieDstr{2;1}  ,
      \end{align*}
      which is equivalent to \cref{eq:ELie3:l31}.  Doing this for all generators of $\LieD[3]$ and restricting to 3-term
      complexes leads to the following Definition.

      \begin{df} \label{df:wLie3}
        A \emph{weak Lie 3-algebra} $(L,\diff,\LieDtw)$ is a 3-term chain complex
        $(L,\diff) = L_0 \xleftarrow{\diff} L_1 \xleftarrow{\diff} L_2$, equipped with structure maps
        \begin{align*}
          \LieDstr{2}       &\colon  L^{\tensor 2} \to L ,     &
          \LieDstr{2;1}     &\colon  L^{\tensor 2} \to L[1] ,  &
          \LieDstr{2;1,1}   &\colon  L^{\tensor 2} \to L[2] ,  \\
          &&
          \LieDstr{3}       &\colon  L^{\tensor 3} \to L[1] ,  &
          \LieDstr{3;1}     &\colon  L^{\tensor 3} \to L[2] ,  \\
          &&
          &&
          \LieDstr{3;2}     &\colon  L^{\tensor 3} \to L[2] ,  \\
          &&
          &&
          \LieDstr{4}       &\colon  L^{\tensor 4} \to L[2] .
        \end{align*}
        We require these to satisfy the \cref{eq:Leib3:l2,eq:Leib3:l3,eq:Leib3:l4,eq:Leib3:l5}, i.e.\@
        $(L,\diff,\LeibKtw)$ to be a Leibniz 3-algebra, and in addition we require the following
        equations to hold,
        \begin{align}
          \dEnd(\LieDstr{2;1}) &= \LieDstr{2} + \LieDstr{2}[(12)] ,
            \label{eq:ELie3:l21}  \\
          \dEnd(\LieDstr{2;1,1}) &= \LieDstr{2;1} - \LieDstr{2;1}[(12)] ,
            \label{eq:ELie3:l211}  \\
          \dEnd(\LieDstr{3;1}) &= \LieDstr{3} + \LieDstr{3}[(12)] + \LieDstr{2}\circ_1\LieDstr{2;1} ,
            \label{eq:ELie3:l31}  \\
          \dEnd(\LieDstr{3;2}) &= \LieDstr{3} + \LieDstr{3}[(23)] - \LieDstr{2}\circ_2\LieDstr{2;1}
						+ \LieDstr{2;1}\circ_1\LieDstr{2} + (\LieDstr{2;1}\circ_2\LieDstr{2})^{(12)} ,
            \label{eq:ELie3:l32}  \\
          \LieDstr{2;1,1} + \LieDstr{2;1,1}[(12)] &= 0,
            \label{eq:ELie3:l2111}  \\
          \LieDstr{3;1} - \LieDstr{3;1}[(12)] &= \LieDstr{2}\circ_1\LieDstr{2;1,1} ,
            \label{eq:ELie3:l311}  \\
          \begin{split}
            \LieDstr{3;1} - \LieDstr{3;2}[(12)] &+ \LieDstr{3;1}[(132)]
              - \LieDstr{3;2} + \LieDstr{3;1}[(23)] - \LieDstr{3;2}[(123)]  \\
              &= \LieDstr{2;1}\circ_2\LieDstr{2;1} + \LieDstr{2;1}\circ_1\LieDstr{2;1}
              + (\LieDstr{2;1}\circ_2\LieDstr{2;1})^{(12)} + (\LieDstr{2;1,1}\circ_2\LieDstr{2})^{(132)} ,
          \end{split} \label{eq:ELie3:l312}  \\
          \LieDstr{3;2} - \LieDstr{3;2}[(23)] &= - \LieDstr{2}\circ_2\LieDstr{2;1,1}
            + \LieDstr{2;1,1}\circ_1\LieDstr{2} + (\LieDstr{2;1,1}\circ_2\LieDstr{2})^{(12)} ,
            \label{eq:ELie3:l322}  \\
          \LieDstr{4} + \LieDstr{4}[(12)] &= \LieDstr{2}\circ_1\LieDstr{3;1} - \LieDstr{3;1}\circ_3\LieDstr{2}
            + (\LieDstr{2}\circ_2\LieDstr{3;1})^{(123)} + \LieDstr{3}\circ_1\LieDstr{2;1} ,
            \label{eq:ELie3:l41}  \\
          \begin{split}
            \LieDstr{4} + \LieDstr{4}[(23)]  &=  \LieDstr{2}\circ_1\LieDstr{3;2}
              - \LieDstr{3}\circ_2\LieDstr{2;1} + \LieDstr{2}\circ_2\LieDstr{3;1}
              - \LieDstr{3;1}\circ_1\LieDstr{2} - (\LieDstr{3;1}\circ_2\LieDstr{2})^{(12)}
              - (\LieDstr{3;1}\circ_3\LieDstr{2})^{(132)} ,
          \end{split} \label{eq:ELie3:l42}  \\
          \begin{split}
            \LieDstr{4} + \LieDstr{4}[(34)] &= \LieDstr{2;1}\circ_1\LieDstr{3} - \LieDstr{3;2}\circ_1\LieDstr{2}
              + \LieDstr{3;2}\circ_2\LieDstr{2} - (\LieDstr{3;2}\circ_2\LieDstr{2})^{(12)}
              + \LieDstr{3}\circ_3\LieDstr{2;1}
              + (\LieDstr{3;2}\circ_3\LieDstr{2})^{(23)} \\&\quad - (\LieDstr{3;2}\circ_3\LieDstr{2})^{(132)}
              + \LieDstr{2}\circ_2\LieDstr{3;2} - (\LieDstr{2}\circ_2\LieDstr{3;2})^{(12)}
              + (\LieDstr{2;1}\circ_2\LieDstr{3})^{(123)} .
          \end{split} \label{eq:ELie3:l43}
        \end{align}
      \end{df}

      When in the above definition we assume $L_2 = 0$, this forces the structure maps $\LieDstr{2;1,1}$,
      $\LieDstr{3;1}$, $\LieDstr{3;2}$, and $\LieDstr{4}$ to vanish for degree reasons.  The
      \cref{eq:ELie3:l2111,eq:ELie3:l311,eq:ELie3:l312,eq:ELie3:l322,eq:ELie3:l41,eq:ELie3:l42,eq:ELie3:l43,eq:Leib3:l5} hold for degree
      reasons in this case, and the left-hand sides of \cref{eq:ELie3:l211,eq:ELie3:l31,eq:ELie3:l32,eq:Leib3:l4} become $0$.  In
      this way, we recover Roytenberg's definition of a \emph{2-term $\ELinf$-algebra}
      \cite[Definition~2.16]{roytenberg_weak_2007} (which we will call \emph{weak Lie 2-algebras}.)

    \subsection{Morphisms of weak Lie 3-algebras} \label{S:ELie3:Mor}
      Since weak Lie 3-algebras are algebras over the operad $\COBAR\LieD[3]$, they come with a general notion of
      morphism of operadic algebras.  Such a morphism of weak Lie 3-algebras $L \to L'$ is a morphisms of the
      underlying chain complexes $(L,\diff) \to (L',\diff')$ commuting with all structure maps.  This notion is of
      limited use however and we will introduce another type of morphisms below.

      Consider again weak Lie 3-algebras $L$ and $L'$, i.e.\@ chain complexes $(L,\diff)$, $(L',\diff')$ equipped with
      structure maps given by twisting morphisms
      \begin{align*}
        &  \LieDtw \in \Tw(\LieD[3],\End_L) ,
        && \LieDtw' \in \Tw(\LieD[3],\End_{L'}) .
      \end{align*}
      Following the general theory described in \Cref{S:Prelim:wMor}, a \emph{weak morphism} of weak Lie $3$-algebras
      is a degree 0 solution to the following Maurer--Cartan equation:
      \begin{align}
        f &\colon \LieD[3] \to \End^L_{L'} ,
        && \partial(f) - f \ast \LieDtw + \LieDtw' \circledast f = 0 .
        \label{eq:ELie3:Mor:Tw}
      \end{align}
      We again evaluate this Maurer--Cartan equation for the $\Fk[\Sy_n]$-generators of $\LieD[3]$, using the
      short-hand notation $f_*$ for $f(\LieDgen{*})$.  E.g.\@ for $\LieDgen{2;1}$, this amounts to
      \begin{align*}
        0 = \big( \partial(f) - f \ast \LieDtw + \LieDtw' \circledast f \big)(\LieDgen{2;1})
        &= \dEnd\big(f(\LieDgen{2;1})\big) - f(\diff\LieDgen{2;1})
          - \big( f \ast \LieDtw \big)(\LieDgen{2;1}) + \big( \LieDtw' \circledast f )(\LieDgen{2;1})  \\
        &= \dEnd(f_{2;1}) + f(\LieDgen{2}) + f(\LieDgen{2})^{(12)}
          - f(\id)\circ_{1}\LieDtw(\LieDgen{2;1}) + \LieDtw'(\LieDgen{2;1})\circ(f(\id),f(\id))  \\
          &= \dEnd(f_{2;1}) + f_2 + f_2^{(12)} - f_1\circ\LieDstr{2;1} + \LieDstr{2;1}[\prime]\circ(f_1,f_1) ,
      \end{align*}
      which is equivalent to \cref{eq:ELie3:f21}.  In summary, we obtain the following definition.

      \begin{df}
        A \emph{weak morphism} of weak Lie 3-algebras $f\colon (L,\diff,\LieDtw) \to (L',\diff',\LieDtw')$ consists of
        a collection of $\Fk$-linear maps,
        \begin{align*}
          f_1       &\colon  L \to L'                , &
          f_2       &\colon  L^{\tensor 2} \to L'[1] , &
          f_{2;1}   &\colon  L^{\tensor 2} \to L'[2] , \\
          &&
          &&
          f_3       &\colon  L^{\tensor 3} \to L'[2] .
        \end{align*}
        We assume that the \cref{eq:Leib3:f1,eq:Leib3:f2,eq:Leib3:f3,eq:Leib3:f4} hold, i.e.\@ $f$ is a morphism of
        Leibniz 3-algebras.  In addition the maps are required to satisfy the following equations:
        \begin{align}
          \dEnd(f_{2;1}) &= - f_2 - f_2^{(12)} + f_1\circ\LieDstr{2;1} - \LieDstr{2;1}[\prime]\circ(f_1,f_1) ,
            \label{eq:ELie3:f21}  \\
          f_1\circ\LieDstr{2;1,1} - \LieDstr{2;1,1}[\prime]\circ(f_1,f_1) &= f_{2;1} - f_{2;1}^{(12)} ,
            \label{eq:ELie3:f211}  \\
          f_1\circ\LieDstr{3;1} - \LieDstr{3;1}[\prime]\circ(f_1,f_1,f_1)
            &= f_3 + f_3^{(12)} + f_2\circ_1\LieDstr{2;1} + \LieDstr{2}[\prime]\circ(f_{2;1},f_1) ,
            \label{eq:ELie3:f31}  \\
          \begin{split}
            f_1\circ\LieDstr{3;2} - \LieDstr{3;2}[\prime]\circ(f_1,f_1,f_1)
              &= f_3 + f_3^{(23)} - f_2\circ_2\LieDstr{2;1}
                + f_{2;1}\circ_1\LieDstr{2} + (f_{2;1}\circ_2\LieDstr{2})^{(12)}  \\
              &\quad- \LieDstr{2}[\prime]\circ(f_1,f_{2;1}) - \LieDstr{2;1}[\prime]\circ(f_2,f_1)
                - \LieDstr{2;1}[\prime]\circ(f_1,f_2)^{(12)} .
          \end{split} \label{eq:ELie3:f32}
        \end{align}

        Let $L \xto{f} L' \xto{f'} L''$ be weak morphisms of weak Lie 3-algebras.  Their composition is defined to be
        the composition of the underlying weak morphisms of Leibniz 3-algebras, i.e.\@ by
        \cref{eq:Leib3:ff1,eq:Leib3:ff2,eq:Leib3:ff3}, with the additional component
        \begin{equation}
          (f'\circ f)_{2;1} \defeq f'_{2;1}\circ(f_1,f_1) + f'_1\circ f_{2;1} .
          \label{eq:ELie3:ff21}
        \end{equation}
      \end{df}

      If we assume that $L$ and $L'$ in the above definition are in fact weak Lie 2-algebras, then for degree reasons
      $f_{2;1}$ and $f_3$ must vanish.  \Cref{eq:ELie3:f211,eq:ELie3:f31,eq:ELie3:f32,eq:Leib3:f4} hold again for
      degree reasons, and the left-hand sides of \cref{eq:Leib3:f3,eq:ELie3:f21} are zero.  In this way we recover
      Roytenberg's notion of \emph{morphism of 2-term $\ELinf$-algebras} \cite[Definition 2.18]{roytenberg_weak_2007}.

    \subsection{The homotopy transfer theorem} \label{S:ELie3:HTT}
      While we explicitly prove a homotopy transfer result for weak Lie 3-algebras below, let us remark again that
      for a cofibrant resolution of any operad such a result is automatic.

      Let $(L,\diff,\LieDtw)$ be a weak Lie 3-algebra.  Assume that we are given a deformation retract of the
      underlying chain complex, i.e.\@ chain maps $p$ and $i$, and a chain homotopy $h$ as follows:
      \begin{align}
        \label{eq:defretract}
        % FIXME: Actually set baseline=(?-1-1.base) or something.
        \begin{tikzcd}[ampersand replacement=\&,/tikz/baseline=-0.5ex]
          (L,\diff)
            \arrow[loop left]{l}{h}
            \arrow[transform canvas={yshift=0.5ex}]{r}{p}
          \& (L',\diff')
            \arrow[transform canvas={yshift=-0.5ex}]{l}{i}
        \end{tikzcd} ,
        &&\textrm{such that }
        \begin{cases}
          \id_L - i\circ p = [\diff,h] ,  \\
          \id_{L'} - p\circ i = 0  .
        \end{cases}
      \end{align}
      In this setting, the following homotopy transfer theorem for weak Lie 3-algebras holds.

      \begin{pp}\label{pp:ELie3:HTT}
        Let $(L,\diff,\LieDtw)$ be a weak Lie 3-algebra and let $(L',\diff')$ be a deformation retract of $(L,\diff)$
        as in \cref{eq:defretract}.  Then $(L',\diff')$ can be equipped with a transferred weak Lie 3-algebra
        structure in such a way, that the map $i$ admits an extension to a weak morphism of weak Lie 3-algebras.
      \end{pp}

      \begin{proof}
        This follows directly from \Cref{lm:htt:str} and \Cref{lm:htt:mor} below.
      \end{proof}

      By the homotopy transfer theorem for $\Leibinf$-algebras we know that one can define a Leibniz 3-algebra
      structure on $(L',\diff')$ by the structure maps
      \begin{align}
        \LieDstr{2}[\prime] &\defeq p \circ \LieDstr{2} \circ (i,i) ,
          \label{eq:htt:l2}  \\
        \LieDstr{3}[\prime] &\defeq p \circ \LieDstr{3} \circ (i,i,i)
          + p \circ \big(
            \LieDstr{2}\circ_1(h\circ\LieDstr{2})
            - \big(\LieDstr{2}\circ_2(h\circ\LieDstr{2})\big)^{1-(12)}
          \big) \circ (i,i,i) ,
          \label{eq:htt:l3}  \\
        \begin{split}
          \LieDstr{4}[\prime] &\defeq p \circ \LieDstr{4} \circ (i,i,i,i)
            - p \circ \left(\begin{aligned}
                &\LieDstr{2}\circ_1(h\circ\LieDstr{3})
                + \big(\LieDstr{2}\circ_2(h\circ\LieDstr{3})\big)^{1-(12)+(123)}  \\
                &+ \LieDstr{3}\circ_1(h\circ\LieDstr{2})
                - \big(\LieDstr{3}\circ_2(h\circ\LieDstr{2})\big)^{1-(12)}  \\
                &+ \big(\LieDstr{3}\circ_3(h\circ\LieDstr{2})\big)^{1-(23)+(132)}
              \end{aligned}\right) \circ (i,i,i,i)  \\
            &\quad- p \circ \left(\begin{aligned}
                &\LieDstr{2}\circ_1(h\circ\LieDstr{2})\circ_1(h\circ\LieDstr{2})
                - \big(\LieDstr{2}\circ_1(h\circ\LieDstr{2})\circ_2(h\circ\LieDstr{2})\big)^{1-(12)}  \\
                &+ \LieDstr{2}\circ\big(h\circ\LieDstr{2},h\circ\LieDstr{2}\big)^{1-(23)+(132)}
                + \big(\LieDstr{2}\circ_2(h\circ\LieDstr{2})\circ_2(h\circ\LieDstr{2})\big)^{1-(12)+(123)}  \\
                &- \big(
                    \LieDstr{2}\circ_2(h\circ\LieDstr{2})\circ_3(h\circ\LieDstr{2})
                  \big)^{1-(12)+(123)-(23)+(132)-(13)}
              \end{aligned}\right) \circ (i,i,i,i) ,
        \end{split} \label{eq:htt:l4}
      \end{align}
      and that the following components define an extension of $i$ to a homotopy morphism of Leibniz 3-algebras
      $i\colon (L',\diff',\LieDtw') \to (L,\diff,\LieDtw)$:
      \begin{align}
        i_1     &= i ,
          \label{eq:htt:i1}  \\
        i_2     &= - h \circ \LieDstr{2} \circ (i,i) ,
          \label{eq:htt:i2}  \\
        i_3     &= - h \circ \LieDstr{3} \circ (i,i,i)
          - h \circ \left(
              \LieDstr{2}\circ_1(h\circ\LieDstr{2})
              - (\LieDstr{2}\circ_2(h\circ\LieDstr{2}))^{1-(12)}
            \right) \circ (i,i,i)  .
          \label{eq:htt:i3}
      \end{align}
      We extend these constructions to weak Lie 3-algebras following the same pattern.

      \begin{lm} \label{lm:htt:str}
        The Leibniz 3-algebra $(L',\diff')$ with structure maps defined by \cref{eq:htt:l2,eq:htt:l3,eq:htt:l4} admits
        an extension to a weak Lie 3-algebra.  Explicitly, such an extension is given by the structure maps
        \begin{align}
          \LieDstr{2;1}[\prime] &\defeq p \circ \LieDstr{2;1} \circ (i,i) ,
            \label{eq:htt:l21}  \\
          \LieDstr{2;1,1}[\prime] &\defeq p \circ \LieDstr{2;1,1} \circ (i,i) ,
            \label{eq:htt:l211}  \\
          \LieDstr{3,1}[\prime] &\defeq p \circ \LieDstr{3;1} \circ (i,i,i)
            - p \circ \big(\LieDstr{2}\circ_1(h\circ\LieDstr{2;1})\big) \circ (i,i,i) ,
            \label{eq:htt:l31}  \\
          \LieDstr{3,2}[\prime] &\defeq p \circ \LieDstr{3;2} \circ (i,i,i)
            + p \circ \big(
                \LieDstr{2;1}\circ_1(h\circ\LieDstr{2})
                + \LieDstr{2}\circ_2(h\circ\LieDstr{2;1})
                + (\LieDstr{2;1}\circ_2(h\circ\LieDstr{2}))^{(12)}
              \big) \circ (i,i,i) .
            \label{eq:htt:l32}
        \end{align}
      \end{lm}

      The proof is a straight-forward verification of
      \cref{eq:ELie3:l21,eq:ELie3:l211,eq:ELie3:l31,eq:ELie3:l32,eq:ELie3:l2111,eq:ELie3:l311,eq:ELie3:l312,eq:ELie3:l322,eq:ELie3:l41,eq:ELie3:l42,eq:ELie3:l43},
      which we postpone to \Cref{A:comp:htt}.

      \begin{lm} \label{lm:htt:mor}
        The Leibniz 3-algebra morphism $i\colon (L',\diff') \to (L,\diff)$ with components defined by
        \cref{eq:htt:i1,eq:htt:i2,eq:htt:i3} extends to a morphism of weak Lie 3-algebras with the additional
        component
        \begin{align}
          i_{2;1} &= - h \circ \LieDstr{2;1} \circ (i,i) .
            \label{eq:htt:i21}
        \end{align}
      \end{lm}

      \begin{proof}
        We verify that with this definition of the additional component $i_{2;1}$
        \cref{eq:ELie3:f21,eq:ELie3:f211,eq:ELie3:f31,eq:ELie3:f32} hold:
        \begin{align*}
          \dEnd(i_{2;1}) &= - \dEnd(h \circ \LieDstr{2;1} \circ (i,i))  \\
            &= - (\id - ip) \circ \LieDstr{2;1} \circ (i,i)
              + h \circ \big(\LieDstr{2} + \LieDstr{2}[(12)]\big) \circ (i,i)  \\
            &= - i_2 - i_2^{(12)} + i_1 \circ \LieDstr{2;1}[\prime] - \LieDstr{2;1} \circ (i_1,i_1) ,  \\
          \begin{split}
          \hspace{2em}&\hspace{-2em}
            i_1 \circ \LieDstr{2;1,1}[\prime] - \LieDstr{2;1,1} \circ (i_1,i_1)  \\
            &= i \circ (p \circ \LieDstr{2;1,1} \circ (i,i)) - \LieDstr{2;1,1} \circ (i,i)
          \end{split}  \\
            &= - \dEnd h \circ \LieDstr{2;1,1} \circ (i,i)  \\
            &= - h \circ \big(\LieDstr{2;1} - \LieDstr{2;1}[(12)]\big) \circ (i,i)  \\
            &= i_{2;1} - i_{2;1}^{(12)} ,  \\
          \begin{split}
          \hspace{2em}&\hspace{-2em}
            i_1 \circ \LieDstr{3;1}[\prime] - \LieDstr{3;1} \circ (i_1,i_1,i_1)  \\
            &= i \circ \big( p \circ (\LieDstr{3;1} - \LieDstr{2} \circ_1 (h\circ\LieDstr{2;1})) \circ (i,i,i) \big)
              - \LieDstr{3;1} \circ (i,i,i)
          \end{split}  \\
            &= - \dEnd h \circ \LieDstr{3;1} \circ (i,i,i)
              - (\id - \dEnd h) \circ (\LieDstr{2}\circ_1(h\circ\LieDstr{2;1})) \circ (i,i,i)  \\
            \begin{split}
              &= - h \circ \big(\LieDstr{3} + \LieDstr{3}[(12)] + \LieDstr{2}\circ_1\LieDstr{2;1}\big) \circ (i,i,i)
                + \LieDstr{2} \circ (i_{2;1}, i_1)  \\
                &\quad+ h \circ (\LieDstr{2} \circ_1 (\dEnd h \circ \LieDstr{2;1})) \circ (i,i,i)
                - h \circ \big(\LieDstr{2} \circ_1 \big(
                    h \circ \big(\LieDstr{2} + \LieDstr{2}[(12)]\big)
                  \big)\big) \circ (i,i,i)
            \end{split}  \\
            \begin{split}
              &= - h \circ (\LieDstr{3} + \LieDstr{2}\circ_1(h\circ\LieDstr{2;1})) \circ (i,i,i)
                - h \circ (\LieDstr{3} + \LieDstr{2}\circ_1(h\circ\LieDstr{2;1}))^{(12)} \circ (i,i,i)  \\
                &\quad+ \LieDstr{2} \circ (i_{2;1}, i_1)
                - h \circ (\LieDstr{2} \circ_1 (ip \circ \LieDstr{2;1})) \circ (i,i,i)
            \end{split}  \\
            &= i_3 + i_3^{(12)} + \LieDstr{2}\circ(i_{2;1},i_1) + i_2\circ_1\LieDstr{2;1}[\prime] ,  \\
          \begin{split}
          \hspace{2em}&\hspace{-2em}
            i_1 \circ \LieDstr{3;2}[\prime] - \LieDstr{3;2} \circ (i_1,i_1,i_1)  \\
            &= i \circ \big( p \circ \big(
                \LieDstr{3;2}
                + \LieDstr{2;1} \circ_1 (h\circ\LieDstr{2})
                + \LieDstr{2} \circ_2 (h\circ\LieDstr{2;1})
                + (\LieDstr{2;1} \circ_2 (h\circ\LieDstr{2}))^{(12)}
              \big) \circ (i,i,i) \big)
              - \LieDstr{3;2} \circ (i,i,i)
          \end{split}  \\
          &= i \circ \big( p \circ \LieDstr{3;2} \circ (i,i,i)
              + p \circ \big(
                  \LieDstr{2;1}\circ_1(h\circ\LieDstr{2})
                  + \LieDstr{2}\circ_2(h\circ\LieDstr{2;1})
                  + (\LieDstr{2;1}\circ_2(h\circ\LieDstr{2}))^{(12)}
                \big)
            \big) - \LieDstr{3;2} \circ (i,i,i)  \\
          &= - \dEnd h \circ \LieDstr{3;2} \circ (i,i,i)
            + (\id - \dEnd h) \circ \big(
                  \LieDstr{2;1}\circ_1(h\circ\LieDstr{2})
                  + \LieDstr{2}\circ_2(h\circ\LieDstr{2;1})
                  + (\LieDstr{2;1}\circ_2(h\circ\LieDstr{2}))^{(12)}
                \big) \circ (i,i,i)  \\
          \begin{split}
            &= -h \circ \big(
                \LieDstr{3} + \LieDstr{3}[(23)]
                + \LieDstr{2;1}\circ_1\LieDstr{2}
                - \LieDstr{2}\circ_2\LieDstr{2;1}
                + (\LieDstr{2;1}\circ_2\LieDstr{2})^{(12)}
              \big) \circ (i,i,i)  \\
              &\quad+ \big(
                  \LieDstr{2;1}\circ_1(h\circ\LieDstr{2})
                  + \LieDstr{2}\circ_2(h\circ\LieDstr{2;1})
                  + (\LieDstr{2;1}\circ_2(h\circ\LieDstr{2}))^{(12)}
                \big) \circ (i,i,i)  \\
              &\quad- h \circ \big(
                  \big( \LieDstr{2} + \LieDstr{2}[(12)] \big) \circ_1 (h \circ \LieDstr{2})
                \big) \circ (i,i,i)
              + h \circ (\LieDstr{2;1}\circ_1((\id-ip)\circ\LieDstr{2}) \circ (i,i,i)  \\
              &\quad- h \circ (\LieDstr{2}\circ_2((\id-ip)\circ\LieDstr{2;1}) \circ (i,i,i)
              + h \circ \big(
                  \LieDstr{2} \circ_2 \big(h \circ \big( \LieDstr{2} + \LieDstr{2}[(12)] \big)\big)
                \big) \circ (i,i,i)  \\
              &\quad- h \circ \big(
                  \big( \LieDstr{2} + \LieDstr{2}[(12)] \big) \circ_2 (h \circ \LieDstr{2})
                \big)^{(12)} \circ (i,i,i)
              + h \circ (\LieDstr{2;1}\circ_2((\id-ip)\circ\LieDstr{2})^{(12)} \circ (i,i,i)
          \end{split}  \\
          \begin{split}
            &= - h \circ \big(
                \LieDstr{3} + \LieDstr{2}\circ_1(h\circ\LieDstr{2}) - (\LieDstr{2}\circ_2(h\circ\LieDstr{2}))^{1-(12)}
              \big) \circ (i,i,i)  \\
            &\quad- h \circ \big(
                \LieDstr{3} + \LieDstr{2}\circ_1(h\circ\LieDstr{2}) - (\LieDstr{2}\circ_2(h\circ\LieDstr{2}))^{1-(12)}
              \big)^{(23)} \circ (i,i,i)  \\
            &\quad-\LieDstr{2;1}\circ(i_2,i_1) - \LieDstr{2}\circ(i_1,i_{2;1}) - \LieDstr{2;1}\circ(i_1,i_2)^{(12)} \\
            &\quad+ (-h\circ\LieDstr{2;1}\circ(i,i)) \circ_1 (p\circ\LieDstr{2}\circ(i,i))
            - (-h\circ\LieDstr{2}\circ(i,i)) \circ_2 (p\circ\LieDstr{2;1}\circ(i,i))  \\
            &\quad+ \big( (-h\circ\LieDstr{2;1}\circ(i,i)) \circ_2 (p\circ\LieDstr{2}\circ(i,i)) \big)^{(12)}
          \end{split}  \\
          \begin{split}
            &= i_3 + i_3^{(23)}
              - \LieDstr{2;1}\circ(i_2,i_1)
              - \LieDstr{2}\circ(i_1,i_{2;1})
              - \LieDstr{2;1}\circ(i_1,i_2)^{(12)}  \\
              &\quad+ i_{2;1}\circ_1\LieDstr{2}[\prime]
              - i_{2}\circ_2\LieDstr{2;1}[\prime]
              + (i_{2;1}\circ_2\LieDstr{2}[\prime])^{(12)} .
          \end{split}
        \end{align*}
        This concludes the proof.
      \end{proof}

  \section{Skew-symmetrization} \label{S:SS}
    Since an $\Linf$-algebra is just a $\Leibinf$-algebra with skew-symmetric structure maps, it seems natural to try
    and construct an $\Linf$-algebra by skew-symmetrizing the structure maps of a $\Leibinf$-algebra.  One way to
    put this more formally is in terms of the Koszul dual cooperads: Assume for a moment that we can construct a right
    inverse to the morphism $\psi$ defined by \cref{eq:LeibKtoLieK}, i.e.\@ a morphism $\phi$ of dg cooperads
    \begin{align*}
      % FIXME: Actually set baseline=(?-1-1.base) or something.
      \begin{tikzcd}[ampersand replacement=\&]
        \psi : \LeibK
          \arrow[transform canvas={yshift=0.5ex}]{r}{}
        \& \LieK : \phi
          \arrow[transform canvas={yshift=-0.5ex}]{l}{}
      \end{tikzcd} ,
      &&\textrm{such that $\psi\circ\phi = \id$.}
    \end{align*}
    In this case, we obtain for any $\Leibinf$-algebra $(L,\diff,\LeibKtw)$ given by a twisting morphism
    $\LeibKtw\colon \LeibK \to \End_L$, an $\Linf$-algebra $(L,\diff,\LieKtw)$ via precomposition of the twisting
    morphism with $\phi$, i.e.\@ $\LieKtw \defeq \LeibKtw\circ\phi$.  We shall make a naive attempt at defining
    such a morphism $\phi$ below to see how it fails.

    Define $\phi$ on $\Fk[\Sy_n]$-generators by
    \begin{equation*}
      \phi(\LieKgen{n}) \defeq \frac{1}{n!} \sum_{\sigma\in\Sy_n} (-1)^{|\sigma|}\cdot \LieDgen{n}[\sigma] .
    \end{equation*}
    Clearly, this is a well-defined morphism of dg $\Sy$-modules and satisfies $\psi\circ\phi = \id$, provided the
    $1/n!$ exist.  The calculation
    below, however, shows that the map $\phi$ does not define a morphism of dg cooperads since $\phi$ does not commute
    with the decomposition map $\decomp$ already in arity~3.  We find
    \begin{align*}
      \decomp\big(\phi(\LieKgen{3})\big)
      &= \frac{1}{6} \sum_{\sigma\in\Sy_3} (-1)^{|\sigma|} \cdot \big(
          - \LieDgen{2}\circ(\LieDgen{2},1)
          + \LieDgen{2}\circ(1,\LieDgen{2})
          - \LieDgen{2}\circ(1,\LieDgen{2})^{(12)}
        \big)^\sigma  \\
      &= - \frac{1}{6} \sum_{\sigma\in\Sy_3} (-1)^{|\sigma|} \cdot \LieDgen{2}\circ(\LieDgen{2},1)^\sigma
        + \frac{1}{3} \sum_{\sigma\in\Sy_3} (-1)^{|\sigma|} \cdot \LieDgen{2}\circ(1,\LieDgen{2})^\sigma ,
    \intertext{while}
      \phi\big(\decomp(\LieKgen{3})\big)
      &= - \frac{1}{4} \sum_{\sigma\in\Sy_3} (-1)^{|\sigma|} \cdot \big(
          \LieDgen{2}\circ(\LieDgen{2},1)
          - \LieDgen{2}\circ(1,\LieDgen{2})
        \big)^\sigma .
    \end{align*}
    Note that the difference
    \begin{align*}
      \big(\decomp\circ\phi - \phi\circ\decomp\big)(\LieKgen{3})
      &= \frac{1}{12} \sum_{\sigma\in\Sy_3} (-1)^{|\sigma|} \cdot \big(
          \LieDgen{2}\circ(\LieDgen{2},1)
          + \LieDgen{2}\circ(1,\LieDgen{2})
        \big)^\sigma
    \intertext{is actually a coboundary when we view $\LeibK[3] \subset \LieD[3]$ as a dg subcooperad,}
      &= - \frac{1}{12} \sum_{\sigma\in\Sy_3} (-1)^{|\sigma|} \cdot \big(
          \diff\LieDgen{2;1}\circ(\LieDgen{2},1)
        \big)^\sigma  .
    \end{align*}
    This suggests that
    \begin{enumerate*}
      \item while $\Leibinf$-algebras do not admit a skew-symmetrization in general, for weak Lie 3-algebras (and more
        generally $\ELinf$-algebras) such a construction may exist, and
      \item we should try to extend $\phi$ to a homotopy morphism of dg cooperads.
    \end{enumerate*}

    The remainder of this section is organized as follows.  In \Cref{S:SS:Phi}, we construct a right inverse $\Phi$
    for $\COBAR\psi\colon \COBAR\LieD[3] \to \COBAR\LieK[3]$.  In \Cref{S:SS:ELie3}, we use $\Phi$ to define a
    skew-symmetrization construction for weak Lie 3-algebras.  In \Cref{S:SS:ELie3:mor}, we define an ad hoc
    skew-symmetrization for morphisms of weak Lie 3-algebras and show that it is functorial up to homotopy.

    In this entire section, we assume that $2,3 \in \Fk^\times$ are units.

    \subsection{A right inverse homotopy morphism for the cooperad resolution} \label{S:SS:Phi}
      Below we construct a right inverse for $\COBAR\psi\colon \COBAR\LieD[3] \to \COBAR\LieK[3]$.  We think of such a
      map as a homotopy morphism of dg cooperads.

      \begin{lm}\label{lm:SS:Phi}
        The morphism $\COBAR\psi$ admits a right inverse, i.e.\@ a morphism $\Phi$ of dg operads
        \begin{align*}
          % FIXME: Actually set baseline=(?-1-1.base) or something.
          \begin{tikzcd}[ampersand replacement=\&]
            \COBAR\psi : \COBAR\LieD[3]
              \arrow[transform canvas={yshift=0.5ex}]{r}{}
            \& \COBAR\LieK[3] : \Phi
              \arrow[transform canvas={yshift=-0.5ex}]{l}{}
          \end{tikzcd} ,
          &&\textrm{such that $\COBAR\psi\circ\Phi = \id$.}
        \end{align*}
        One such morphism $\Phi$ is defined by
        \begin{align*}
          %\Phi(s^{-1}\LieKgen{2}) &= \frac{1}{2} \big( s^{-1}\LieDgen{2} - s^{-1}\LieDgen{2}[(12)] \big)  \\
          \Phi(s^{-1}\LieKgen{2})
            &= \frac{1}{2} \sum_{\sigma\in\Sy_2} (-1)^{|\sigma|} \cdot s^{-1}\LieDgen{2}[\sigma]  , \\
          \Phi(s^{-1}\LieKgen{3})
            &= \frac{1}{6} \sum_{\sigma\in\Sy_3} (-1)^{|\sigma|} \cdot s^{-1}\LieDgen{3}[\sigma]
              - \frac{1}{24} \sum_{\sigma\in\Sy_3} (-1)^{|\sigma|} \cdot \big(
                  s^{-1}\LieDgen{2;1} \circ_1 s^{-1}\LieDgen{2}
                  + s^{-1}\LieDgen{2;1} \circ_2 s^{-1}\LieDgen{2}
                \big)^\sigma  , \\
          \Phi(s^{-1}\LieKgen{4})
            &= \frac{1}{24} \sum_{\sigma\in\Sy_4} (-1)^{|\sigma|} \cdot s^{-1}\LieDgen{4}[\sigma]
              + \frac{1}{48} \sum_{\sigma\in\Sy_4} (-1)^{|\sigma|} \cdot \left(\begin{aligned}
                  s^{-1}\LieDgen{2;1} \circ_1 s^{-1}\LieDgen{3}
                  - s^{-1}\LieDgen{3;1} \circ_1 s^{-1}\LieDgen{2}
                  + s^{-1}\LieDgen{3;2} \circ_2 s^{-1}\LieDgen{2}  \\
                  - s^{-1}\LieDgen{2;1} \circ_2 s^{-1}\LieDgen{3}
                  - s^{-1}\LieDgen{3;1} \circ_2 s^{-1}\LieDgen{2}
                  + s^{-1}\LieDgen{3;2} \circ_3 s^{-1}\LieDgen{2}
                \end{aligned}\right)^\sigma  .
        \end{align*}
      \end{lm}

      \begin{proof}
        Any morphism of free dg operads is completely determined by its value on generators, i.e.\@ it is sufficient to
        define $\Phi|_{s^{-1}\LieK[3]}$ and its extension $\Phi$ is then automatically a morphism of operads.  It
        remains to verify that $\Phi$ commutes with the differential, which we do below:
        \begin{align*}
          % [d,F](l_2) = 0
          \diff\Phi(s^{-1}\LieKgen{2})
            &= \frac{1}{2} \sum_{\sigma\in\Sy_2} (-1)^{|\sigma|} \cdot \diff s^{-1}\LieDgen{2}[\sigma]
            = 0
            = \Phi(0)
            = \Phi(\diff s^{-1}\LieKgen{2}) ,  \\
          % [d,F](l_3) = 0
          \diff\Phi(s^{-1}\LieKgen{3})
            &= - \frac{1}{6} \sum_{\sigma\in\Sy_3} (-1)^{|\sigma|} \cdot \diff_2 s^{-1}\LieDgen{3}[\sigma]
              - \frac{1}{24} \sum_{\sigma\in\Sy_3} (-1)^{|\sigma|} \cdot \big(
                  \diff_1 s^{-1}\LieDgen{2;1} \circ_1 s^{-1}\LieDgen{2}
                  + \diff_1 s^{-1}\LieDgen{2;1} \circ_2 s^{-1}\LieDgen{2}
                \big)^\sigma  \\
            &= - \frac{1}{6} \sum_{\sigma\in\Sy_3} (-1)^{|\sigma|} \cdot \big(
                - s^{-1}\LieDgen{2} \circ_1 s^{-1}\LieDgen{2}
                + s^{-1}\LieDgen{2} \circ_2 s^{-1}\LieDgen{2}
                - (s^{-1}\LieDgen{2} \circ_2 s^{-1}\LieDgen{2})^{(12)}
              \big)^\sigma  \\
              &\quad- \frac{1}{12} \sum_{\sigma\in\Sy_3} (-1)^{|\sigma|} \cdot
                \big( (s^{-1}\LieDgen{2} + s^{-1}\LieDgen{2}[(12)]) \circ_1 s^{-1}\LieDgen{2} \big)^\sigma  \\
            &= - \frac{1}{4} \sum_{\sigma\in\Sy_3} (-1)^{|\sigma|} \cdot \big(
                s^{-1}\LieDgen{2} \circ_1 s^{-1}\LieDgen{2}
                - s^{-1}\LieDgen{2} \circ_2 s^{-1}\LieDgen{2}
              \big)^\sigma  \\
            &= - \Phi(s^{-1}\LieKgen{2}) \circ_1 \Phi(s^{-1}\LieKgen{2})
              + \Phi(s^{-1}\LieKgen{2}) \circ_2 \Phi(s^{-1}\LieKgen{2})
              - (\Phi(s^{-1}\LieKgen{2}) \circ_2 \Phi(s^{-1}\LieKgen{2}))^{(12)}  \\
            &= \Phi\big(
                - s^{-1}\LieKgen{2} \circ_1 s^{-1}\LieKgen{2}
                + s^{-1}\LieKgen{2} \circ_2 s^{-1}\LieKgen{2}
                - (s^{-1}\LieKgen{2} \circ_2 s^{-1}\LieKgen{2})^{(12)}
              \big)
            = \Phi(- \diff_2 s^{-1}\LieKgen{3})
            = \Phi(\diff s^{-1}\LieKgen{3}) ,  \\
          % [d,F](l_4) = 0
          \diff\Phi(s^{-1}\LieKgen{4})
            &= - \frac{1}{24} \sum_{\sigma\in\Sy_4} (-1)^{|\sigma|} \cdot \diff_2 s^{-1}\LieDgen{4}[\sigma]
              + \frac{1}{48} \sum_{\sigma\in\Sy_4} (-1)^{|\sigma|} \cdot \left(\begin{aligned}
                  (\diff_1 s^{-1}\LieDgen{2;1}) \circ_1 s^{-1}\LieDgen{3}
                  + s^{-1}\LieDgen{2;1} \circ_1 (\diff_2 s^{-1}\LieDgen{3})  \\
                  - (\diff_1 s^{-1}\LieDgen{2;1}) \circ_2 s^{-1}\LieDgen{3}
                  - s^{-1}\LieDgen{2;1} \circ_2 (\diff_2 s^{-1}\LieDgen{3})
                \end{aligned}\right)^\sigma  \\
              &\quad- \frac{1}{48} \sum_{\sigma\in\Sy_4} (-1)^{|\sigma|} \cdot \left(\begin{aligned}
                  (\diff_1 s^{-1}\LieDgen{3;1}) \circ_1 s^{-1}\LieDgen{2}
                  - (\diff_2 s^{-1}\LieDgen{3;1}) \circ_1 s^{-1}\LieDgen{2}  \\
                  + (\diff_1 s^{-1}\LieDgen{3;1}) \circ_2 s^{-1}\LieDgen{2}
                  - (\diff_2 s^{-1}\LieDgen{3;1}) \circ_2 s^{-1}\LieDgen{2}
                \end{aligned}\right)^\sigma  \\
              &\quad+ \frac{1}{48} \sum_{\sigma\in\Sy_4} (-1)^{|\sigma|} \cdot \left(\begin{aligned}
                  (\diff_1 s^{-1}\LieDgen{3;2}) \circ_2 s^{-1}\LieDgen{2}
                  - (\diff_2 s^{-1}\LieDgen{3;2}) \circ_2 s^{-1}\LieDgen{2}  \\
                  + (\diff_1 s^{-1}\LieDgen{3;2}) \circ_2 s^{-1}\LieDgen{2}
                  - (\diff_2 s^{-1}\LieDgen{3;2}) \circ_2 s^{-1}\LieDgen{2}
                \end{aligned}\right)^\sigma  \\
            &= - \frac{1}{24} \sum_{\sigma\in\Sy_4} (-1)^{|\sigma|} \cdot \left(\begin{aligned}
                &- s^{-1}\LieDgen{2} \circ_1 s^{-1}\LieDgen{3}
                - 3 s^{-1}\LieDgen{2} \circ_2 s^{-1}\LieDgen{3}  \\
                &+ s^{-1}\LieDgen{3} \circ_1 s^{-1}\LieDgen{2}
                - 2 s^{-1}\LieDgen{3} \circ_2 s^{-1}\LieDgen{2}
                + 3 s^{-1}\LieDgen{3} \circ_3 s^{-1}\LieDgen{2}
              \end{aligned}\right)^\sigma  \\
              &\quad+ \frac{1}{48} \sum_{\sigma\in\Sy_4} (-1)^{|\sigma|} \cdot \left(\begin{aligned}
                  &s^{-1}\LieDgen{2} \circ_1 s^{-1}\LieDgen{3}
                  + (s^{-1}\LieDgen{2} \circ_2 s^{-1}\LieDgen{3})^{(1234)}
                  - s^{-1}\LieDgen{2} \circ_2 s^{-1}\LieDgen{3}
                  - (s^{-1}\LieDgen{2} \circ_1 s^{-1}\LieDgen{3})^{(1432)}  \\
                  &+ s^{-1}\LieDgen{2;1} \circ_1 \big(
                      s^{-1}\LieKgen{2} \circ_1 s^{-1}\LieKgen{2}
                      - s^{-1}\LieKgen{2} \circ_2 s^{-1}\LieKgen{2}
                      + (s^{-1}\LieKgen{2} \circ_2 s^{-1}\LieKgen{2})^{(12)}
                    \big)  \\
                  &- s^{-1}\LieDgen{2;1} \circ_2 \big(
                      s^{-1}\LieKgen{2} \circ_1 s^{-1}\LieKgen{2}
                      - s^{-1}\LieKgen{2} \circ_2 s^{-1}\LieKgen{2}
                      + (s^{-1}\LieKgen{2} \circ_2 s^{-1}\LieKgen{2})^{(12)}
                    \big)
                \end{aligned}\right)^\sigma  \\
              &\quad- \frac{1}{48} \sum_{\sigma\in\Sy_4} (-1)^{|\sigma|} \cdot \left(\begin{aligned}
                  s^{-1}\LieDgen{3} \circ_1 s^{-1}\LieDgen{2}
                  + (s^{-1}\LieDgen{3} \circ_2 s^{-1}\LieDgen{2})^{(123)}
                  + (s^{-1}\LieDgen{2} \circ_1 s^{-1}\LieDgen{2;1}) \circ_1 s^{-1}\LieDgen{2}  \\
                  s^{-1}\LieDgen{3} \circ_2 s^{-1}\LieDgen{2}
                  + (s^{-1}\LieDgen{3} \circ_1 s^{-1}\LieDgen{2})^{(132)}
                  + (s^{-1}\LieDgen{2} \circ_1 s^{-1}\LieDgen{2;1}) \circ_2 s^{-1}\LieDgen{2}
                \end{aligned}\right)^\sigma  \\
              &\quad+ \frac{1}{48} \sum_{\sigma\in\Sy_4} (-1)^{|\sigma|} \cdot \left(\begin{aligned}
                  &s^{-1}\LieDgen{3} \circ_2 s^{-1}\LieDgen{2}
                  + (s^{-1}\LieDgen{3} \circ_3 s^{-1}\LieDgen{2})^{(234)}  \\
                  &+ (s^{-1}\LieDgen{2;1} \circ_1 s^{-1}\LieDgen{2}
                      - s^{-1}\LieDgen{2} \circ_2 s^{-1}\LieDgen{2;1}
                      + (s^{-1}\LieDgen{2;1} \circ_2 s^{-1}\LieDgen{2})^{(12)}
                    ) \circ_2 s^{-1}\LieDgen{2}  \\
                  &+ s^{-1}\LieDgen{3} \circ_3 s^{-1}\LieDgen{2}
                  + (s^{-1}\LieDgen{3} \circ_2 s^{-1}\LieDgen{2})^{(243)}  \\
                  &+ (s^{-1}\LieDgen{2;1} \circ_1 s^{-1}\LieDgen{2}
                      - s^{-1}\LieDgen{2} \circ_2 s^{-1}\LieDgen{2;1}
                      + (s^{-1}\LieDgen{2;1} \circ_2 s^{-1}\LieDgen{2})^{(12)}
                    ) \circ_3 s^{-1}\LieDgen{2}
                \end{aligned}\right)^\sigma  \\
            &= \frac{1}{12} \sum_{\sigma\in\Sy_4} (-1)^{|\sigma|} \cdot \left(
                s^{-1}\LieDgen{2} \circ_1 s^{-1}\LieDgen{3}
                + s^{-1}\LieDgen{2} \circ_2 s^{-1}\LieDgen{3}
                - s^{-1}\LieDgen{3} \circ_1 s^{-1}\LieDgen{2}
                + s^{-1}\LieDgen{3} \circ_2 s^{-1}\LieDgen{2}
                - s^{-1}\LieDgen{3} \circ_3 s^{-1}\LieDgen{2}
              \right)^\sigma  \\
              &\quad- \frac{1}{48} \sum_{\sigma\in\Sy_4} (-1)^{|\sigma|} \cdot \left(\begin{aligned}
                  & s^{-1}\LieDgen{2} \circ_1 s^{-1}\LieDgen{2;1} \circ_1 s^{-1}\LieDgen{2}
                  + s^{-1}\LieDgen{2} \circ_1 s^{-1}\LieDgen{2;1} \circ_2 s^{-1}\LieDgen{2}  \\
                  &+ s^{-1}\LieDgen{2} \circ_2 s^{-1}\LieDgen{2;1} \circ_2 s^{-1}\LieDgen{2}
                  + s^{-1}\LieDgen{2} \circ_2 s^{-1}\LieDgen{2;1} \circ_3 s^{-1}\LieDgen{2}  \\
                  &- s^{-1}\LieDgen{2;1} \circ_1 s^{-1}\LieDgen{2} \circ_1 s^{-1}\LieDgen{2}
                  - s^{-1}\LieDgen{2;1} \circ_2 s^{-1}\LieDgen{2} \circ_1 s^{-1}\LieDgen{2}  \\
                  &+ s^{-1}\LieDgen{2;1} \circ_1 s^{-1}\LieDgen{2} \circ_2 s^{-1}\LieDgen{2}
                  + s^{-1}\LieDgen{2;1} \circ_2 s^{-1}\LieDgen{2} \circ_2 s^{-1}\LieDgen{2}  \\
                  &- s^{-1}\LieDgen{2;1} \circ_1 s^{-1}\LieDgen{2} \circ_3 s^{-1}\LieDgen{2}
                  - s^{-1}\LieDgen{2;1} \circ_2 s^{-1}\LieDgen{2} \circ_3 s^{-1}\LieDgen{2}
                \end{aligned}\right)^\sigma  \\
            &= \Phi\left(\begin{aligned}
                &s^{-1}\LieKgen{2} \circ_1 s^{-1}\LieKgen{3}
                + s^{-1}\LieKgen{2} \circ_2 s^{-1}\LieKgen{3}
                - (s^{-1}\LieKgen{2} \circ_2 s^{-1}\LieKgen{3})^{(12)}
                + (s^{-1}\LieKgen{2} \circ_2 s^{-1}\LieKgen{3})^{(123)}  \\
                &- s^{-1}\LieKgen{3} \circ_1 s^{-1}\LieKgen{2}
                + s^{-1}\LieKgen{3} \circ_2 s^{-1}\LieKgen{2}
                - (s^{-1}\LieKgen{3} \circ_2 s^{-1}\LieKgen{2})^{(12)}  \\
                &- s^{-1}\LieKgen{3} \circ_3 s^{-1}\LieKgen{2}
                + (s^{-1}\LieKgen{3} \circ_3 s^{-1}\LieKgen{2})^{(23)}
                - (s^{-1}\LieKgen{3} \circ_3 s^{-1}\LieKgen{2})^{(132)}
              \end{aligned}\right)
            = \Phi(\diff s^{-1}\LieKgen{4}) .
        \end{align*}
        Finally, note that indeed $\COBAR\psi \circ \Phi = \id$.
      \end{proof}

    \subsection{Skew-symmetrization for weak Lie 3-algebras} \label{S:SS:ELie3}
      In \Cref{S:ELie3:Alg} we defined a weak Lie 3-algebra $(L,\diff,\LieDtw)$ as a 3-term complex $(L,\diff)$ with a
      twisting morphism $\LieDtw\colon \LieD[3] \to \End_L$.  By \cref{eq:Tw:Iso}, such a twisting morphism
      corresponds to a morphism of dg operads $g_\LieDtw\colon \COBAR\LieD[3] \to \End_L$ via
      $g_\LieDtw|_{s^{-1}\LieD[3]}(s^{-1}\LieDgen{*}) = \LieDstr{*}$.  By precomposition with $\Phi$ we obtain a
      morphism
      \begin{equation*}
        \Phi^*g_\LieDtw\colon \COBAR\LieK[3] \to \COBAR\LieD[3] \to \End_L
      \end{equation*}
      of dg operads, which in turn corresponds to a 3-term $\Linf$-algebra or (semi-strict) Lie 3-algebra.  We make
      the result of this construction explicit below.

      \begin{df}\label{df:SS}
        Let $(L,\diff,\LieDtw)$ be a weak Lie 3-algebra.  We define its \emph{skew-symmetrization} to be the
        (semi-strict) Lie 3-algebra $(L,\diff,\LieKstr{})$ given by the following structure maps:
        \begin{align}
          \LieKstr{2} &\defeq \frac{1}{2} \sum_{\sigma\in\Sy_2} (-1)^{|\sigma|} \cdot \LieDstr{2}[\sigma] ,
            \label{df:SS:l2}  \\
          \LieKstr{3} &\defeq \frac{1}{6} \sum_{\sigma\in\Sy_3} (-1)^{|\sigma|} \cdot \LieDstr{3}[\sigma]
              - \frac{1}{24} \sum_{\sigma\in\Sy_3} (-1)^{|\sigma|} \cdot
                \big( \LieDstr{2;1} \circ_1 \LieDstr{2} + \LieDstr{2;1} \circ_2 \LieDstr{2} \big)^\sigma ,
            \label{df:SS:l3}  \\
          \LieKstr{4} &\defeq \frac{1}{24} \sum_{\sigma\in\Sy_4} (-1)^{|\sigma|} \cdot \LieDstr{4}[\sigma]
              + \frac{1}{48} \sum_{\sigma\in\Sy_4} (-1)^{|\sigma|} \cdot \left(\begin{aligned}
                  \LieDstr{2;1} \circ_1 \LieDstr{3}
                  - \LieDstr{3;1} \circ_1 \LieDstr{2}
                  + \LieDstr{3;2} \circ_2 \LieDstr{2}  \\
                  - \LieDstr{2;1} \circ_2 \LieDstr{3}
                  - \LieDstr{3;1} \circ_2 \LieDstr{2}
                  + \LieDstr{3;2} \circ_3 \LieDstr{2}
                \end{aligned}\right)^\sigma .
            \label{df:SS:l4}
        \end{align}
      \end{df}

      Note that for weak Lie 2-algebras, $\LieDstr{2;1}$ is symmetric and we recover Roytenberg's skew-symmetrization
      construction in this case.

    \subsection{Skew-symmetrization for morphisms of weak Lie 3-algebras} \label{S:SS:ELie3:mor}
      A morphism $f\colon (L,\diff,\LieDtw) \to (L',\diff',\LieDtw')$ of weak Lie 3-algebras was defined in
      \Cref{S:ELie3:Mor} as a morphism $f\colon\LieD[3] \to \End^L_{L'}$ satisfying a certain Maurer--Cartan type
      \cref{eq:ELie3:Mor:Tw}.  Such a morphism in general does not correspond to a morphism
      $\COBAR\LieD[3] \to \End^L_{L'}$ and there is no obvious way to precompose $f$ with $\Phi$.  Below we give an
      ad hoc construction for a skew-symmetrization of morphisms instead.

      \begin{lm} \label{df:SS:mor}
        Let $f\colon (L,\diff,\LieDtw) \to (L',\diff',\LieDtw')$ be a morphism of weak Lie 3-algebras.
        The following components define a morphism of (semi-strict) Lie 3-algebras
        $\SSy{f}\colon (L,\diff,\LieKtw) \to (L',\diff',\LieKtw')$:
        \begin{align}
          \SSy{f}_1 &\defeq f_1 ,  \\
          \SSy{f}_2 &\defeq \frac{1}{2} \sum_{\sigma\in\Sy_2} (-1)^{|\sigma|} \cdot f_2^\sigma ,  \\
          \SSy{f}_3 &\defeq \frac{1}{6} \sum_{\sigma\in\Sy_3} (-1)^{|\sigma|} \cdot f_3^\sigma
              - \frac{1}{24} \sum_{\sigma\in\Sy_3} (-1)^{|\sigma|} \cdot
                \left(\begin{aligned}
                    f_{2;1} \circ_1 \LieDstr{2} + f_{2;1} \circ_2 \LieDstr{2}
                    - \LieDstr{2;1}[\prime] \circ (f_2,f_1) - \LieDstr{2;1}[\prime] \circ (f_1,f_2)
                  \end{aligned}\right)^\sigma .
        \end{align}
        We call $\SSy{f}$ the \emph{skew-symmetrization} of $f$.
      \end{lm}

      The proof is a direct verification of \cref{eq:Leib3:f1,eq:Leib3:f2,eq:Leib3:f3,eq:Leib3:f4} and is given in
      \Cref{A:comp:SSmor}.

      \subsubsection{Functoriality} In low degrees, skew-symmetrization commutes with composition of morphisms,
      \begin{align*}
        (\SSy{f'\circ f})_1 &= (f'\circ f)_1 = f'_1\circ f_1 =
          \SSy{f'}_1\circ\SSy{f}_1 = (\SSy{f'}\circ\SSy{f})_1 ,  \\
        (\SSy{f'\circ f})_2 &= \frac{1}{2}\left( (f'\circ f)_2 - (f'\circ f)_2^{(12)} \right)  \\
          &= \frac{1}{2}\left( f'_2\circ(f_1,f_1) + f'_1\circ f_2
            - f'_2\circ(f_1,f_1)^{(12)} - f'_1\circ f_2^{(12)} \right)  \\
          &= \SSy{f'}_2\circ(\SSy{f}_1,\SSy{f}_1) + \SSy{f'}_1\circ \SSy{f}_2  \\
          &= (\SSy{f'}\circ\SSy{f})_2 .
      \end{align*}
      In particular, this implies that skew-symmetrization of weak Lie 2-algebras forms a functor
      \cite[Theorem~3.2]{roytenberg_weak_2007}.  However, for weak Lie 3-algebras this is no longer the case as the
      following computation shows:
      \begin{align*}
        (\SSy{f'\circ f} - \SSy{f'}\circ\SSy{f})_3
          &= \frac{1}{6} \sum_{\sigma\in\Sy_3} (-1)^{|\sigma|} \cdot (f'\circ f)_3^\sigma
              - \frac{1}{24} \sum_{\sigma\in\Sy_3} (-1)^{|\sigma|} \cdot
                \left(\begin{aligned}
                    (f'\circ f)_{2;1} \circ_1 \LieDstr{2}
                    + (f'\circ f)_{2;1} \circ_2 \LieDstr{2}  \\
                    {}- \LieDstr{2;1}[\dprime] \circ ((f'\circ f)_2,(f'\circ f)_1)  \\
                    {}- \LieDstr{2;1}[\dprime] \circ ((f'\circ f)_1,(f'\circ f)_2)
                  \end{aligned}\right)^\sigma  \\
            &\quad- \left(
                \SSy{f'}_3\circ(\SSy{f}_1,\SSy{f}_1,\SSy{f}_1)
                - \SSy{f'}_2\circ(\SSy{f}_2,\SSy{f}_1)
                + \SSy{f'}_2\circ(\SSy{f}_1,\SSy{f}_2)
                - \SSy{f'}_2\circ(\SSy{f}_1,\SSy{f}_2)^{(12)}
                + \SSy{f'}_1\circ \SSy{f}_3
              \right)  \\
          &= \frac{1}{6} \sum_{\sigma\in\Sy_3} (-1)^{|\sigma|} \cdot \left(
              f'_3\circ(f_1,f_1,f_1)
              - f'_2\circ(f_2,f_1) + f'_2\circ(f_1,f_2) - f'_2\circ(f_1,f_2)^{(12)}
              + f'_1\circ f_3
            \right)^\sigma  \\
              &\qquad- \frac{1}{24} \sum_{\sigma\in\Sy_3} (-1)^{|\sigma|} \cdot
                \left(\begin{aligned}
                    (f'_{2;1} \circ (f_1,f_1) + f'_1 \circ f_{2;1}) \circ_1 \LieDstr{2}
                    {}+ (f'_{2;1} \circ (f_1,f_1) + f'_1 \circ f_{2;1}) \circ_2 \LieDstr{2}  \\
                    {}- \LieDstr{2;1}[\dprime] \circ (f'_2 \circ (f_1,f_1) + f'_1 \circ f_2,f'_1 \circ f_1)  \\
                    {}- \LieDstr{2;1}[\dprime] \circ (f'_1 \circ f_1,f'_2 \circ (f_1,f_1) + f'_1 \circ f_2)
                  \end{aligned}\right)^\sigma  \\
            &\quad- \frac{1}{6} \sum_{\sigma\in\Sy_3} (-1)^{|\sigma|} \cdot f'_3\circ(f_1,f_1,f_1)^\sigma
              + \frac{1}{24} \sum_{\sigma\in\Sy_3} (-1)^{|\sigma|} \cdot
                \left(\begin{aligned}
                  &\phantom{{}+{}}f'_{2;1} \circ_1 \LieDstr{2}[\prime] \circ (f_1,f_1,f_1)  \\
                    &+ f'_{2;1} \circ_2 \LieDstr{2}[\prime] \circ (f_1,f_1,f_1)  \\
                    &- \LieDstr{2;1}[\dprime] \circ (f'_2,f'_1) \circ (f_1,f_1,f_1)  \\
                    &- \LieDstr{2;1}[\dprime] \circ (f'_1,f'_2) \circ (f_1,f_1,f_1)
                  \end{aligned}\right)^\sigma  \\
            &\quad- \frac{1}{4} \left(\begin{aligned}
                - \big(f'_2 - f_2^{\prime(12)}\big) \circ \big(f_2 - f_2^{(12)},f_1\big)
                + \big(f'_2 - f_2^{\prime(12)}\big) \circ \big(f_1,f_2 - f_2^{(12)}\big)\phantom{^{(12)}}  \\
                {}- \big(f'_2 - f_2^{\prime(12)}\big) \circ \big(f_1,f_2 - f_2^{(12)}\big)^{(12)}
              \end{aligned}\right)  \\
            &\quad- \frac{1}{6} \sum_{\sigma\in\Sy_3} (-1)^{|\sigma|} \cdot f'_1 \circ f_3^\sigma
              + \frac{1}{24} \sum_{\sigma\in\Sy_3} (-1)^{|\sigma|} \cdot
                \left(\begin{aligned}
                    &f'_1 \circ f_{2;1} \circ_1 \LieDstr{2}
                    + f'_1 \circ f_{2;1} \circ_2 \LieDstr{2}  \\
                    &- f'_1 \circ \LieDstr{2;1}[\dprime] \circ (f_2,f_1)
                    - f'_1 \circ \LieDstr{2;1}[\dprime] \circ (f_1,f_2)
                  \end{aligned}\right)^\sigma  \\
          &= \frac{1}{12} \sum_{\sigma\in\Sy_3} (-1)^{|\sigma|} \cdot \left(
              f'_2\circ(f_2,f_1) + f'_2\circ(f_1,f_2)
            \right)^\sigma  \\
          &\quad- \frac{1}{24} \sum_{\sigma\in\Sy_3} (-1)^{|\sigma|} \cdot \left(\begin{aligned}
              &f'_{2;1}\circ(f_1\circ\LieDstr{2},f_1)
              + f'_{2;1}\circ(f_1,f_1\circ\LieDstr{2})  \\
              &- f'_{2;1}\circ(\LieDstr{2}[\prime]\circ(f_1,f_1),f_1)
              - f'_{2;1}\circ(f_1,\LieDstr{2}[\prime]\circ(f_1,f_1))  \\
              &- \LieDstr{2;1}[\dprime]\circ(f'_1\circ f_2,f'_1\circ f_1)
              - \LieDstr{2;1}[\dprime]\circ(f'_1\circ f_1,f'_1\circ f_2)  \\
              &+ f'_1\circ\LieDstr{2;1}[\prime]\circ(f_2,f_1)
              + f'_1\circ\LieDstr{2;1}[\prime]\circ(f_1,f_2)
            \end{aligned}\right)  \\
          &= \dEnd\left( - \frac{1}{24} \sum_{\sigma\in\Sy_3} (-1)^{|\sigma|} \cdot \left(
              f'_{2;1}\circ(f_2,f_1) + f'_{2;1}\circ(f_1,f_2)
            \right)^\sigma \right) .
      \end{align*}
      Note that the defect of functoriality is a coboundary.  We say that skew-symmetrization is functorial up to
      homotopy.

  \section{Applications} \label{S:Appl}
    In this section, we give two examples of applications of the theory developed in the earlier sections.  The first
    is an extension of a result of \cite{rogers_thesis_2011} on algebraic structures on $n$-plectic manifolds.  The
    second is a construction of a weak Lie 3-algebra associated to an CLWX 2-algebroid, whose skew-symmetrization is
    precisely the Lie 3-algebra of \cite[Theorem~3.10]{liu2016qp}.  We thereby give an alternative proof for the
    theorem cited.

    \subsection{Higher symplectic geometry} %\label{S:Appl:HSG}
      In \cite{rogers_thesis_2011,baez_hoffnung_rogers_2008} the concept of $n$-plectic manifolds is introduced.  We
      recall some of the basic definitions here.  We then consider the case of a 3-plectic manifold and compare two
      associated algebraic structures, an $\Linf$-algebra and a dg Leibniz algebra with a certain hidden skew-symmetry.
      Both structures are examples of weak Lie 3-algebras and turn out to be isomorphic as such.  The analogous result
      for 2-plectic manifolds was shown in \cite[Appendix A]{rogers_thesis_2011}.

      \begin{df}
        An \emph{n-plectic manifold} $(M,\omega)$ is a smooth manifold $M$ with a closed, non-degenerate $(n+1)$-form
        $\omega \in \Omega^{n+1}(M)$, i.e.\@ $\diff\omega = 0$ and $\iota(v)\omega = 0$ implies $v=0$.  An
        $(n-1)$-form $\alpha \in \Omega^{n-1}(M)$ is called \emph{Hamiltonian}, if there exists a vector field
        $v_\alpha \in \VF(M)$ such that $\diff\alpha = -\iota(v_\alpha)\omega$.  In order to simplify notation, we let
        $v_\alpha = 0$ whenever $\alpha$ is not Hamiltonian, in particular when $\alpha$ is not an $(n-1)$-form.
      \end{df}

      Let $(M,\omega)$ be an $n$-plectic manifold.  Two algebraic structures are introduced on the chain complex
      \begin{align}
        L_i \defeq \begin{cases}
            \Omega^{n-1}_\text{Ham}(M) ,  & i=0,  \\
            \Omega^{n-1-i}(M) ,           & 0 < i \leq n-1 ,
          \end{cases}
        \label{nPlec:Cx}
      \end{align}
      with differential the usual de Rham differential.  Note that this is well-defined since closed forms are always
      Hamiltonian.  The first structure we introduce is that of an $n$-term $\Linf$-algebra or (semi-strict) $\Lie$
      $n$-algebra $\Linf(M,\omega)$ given by the brackets
      \begin{equation*}
        \LeibKstr{k}(\alpha_1,\dotsc,\alpha_k) \defeq \pm \iota(v_{\alpha_1},\dotsc,v_{\alpha_k}) \omega .
        %\label{nPlec:Linf}
      \end{equation*}
      We will provide the sign in low degrees in the proof of \Cref{pp:Appl:HSG:iso}.  For more details, including the
      general definition of the sign, we refer the reader to loc.\@ cit.  The second structure on the complex of
      \cref{nPlec:Cx} is a dg Leibniz algebra given by
      \begin{equation*}
        \LeibKstr{2}[\prime](\alpha,\beta) \defeq \dL(v_\alpha)\beta ,
        %\label{nPlec:Leib}
      \end{equation*}
      and denoted by $\Leib(M,\omega)$.  This dg Leibniz algebra actually satisfies a certain symmetry up to homotopy,
      and in the case $n=2$ is shown to be a weak Lie 2-algebra with alternator bracket
      \begin{equation*}
        \LieDstr{2;1}[\prime](\alpha,\beta) \defeq \iota(v_\alpha)\beta + \iota(v_\beta)\alpha .
      \end{equation*}
      This result can be extended to the case $n=3$, i.e.\@ the same binary bracket and alternator form a weak Lie
      3-algebra on the underlying 3-term chain complex in that case.

      \begin{pp}\label{pp:Appl:HSG:iso}
        Let $(M,\omega)$ be a 3-plectic manifold.  Then $\Linf(M,\omega)$ and $\Leib(M,\omega)$ are isomorphic as weak
        Lie 3-algebras.
      \end{pp}

      \begin{proof}
        For a 3-plectic manifold $(M,\omega)$, the chain complex underlying both $\Linf(M,\omega)$ and
        $\Leib(M,\omega)$ is
        \begin{equation*}
          (L,\diff^L) \defeq \left(
              \begin{tikzcd}
                \Omega^2_\text{Ham}(M) & \ar[l,"\diff^L"'] \Omega^1(M) & \ar[l,"\diff^L"'] \Omega^0(M) = C^\infty(M)
              \end{tikzcd}
            \right) .
        \end{equation*}
        Note that for a Hamiltonian form $\alpha$, $\diff\alpha$ need not vanish, while $\diff^L\alpha = 0$ for degree
        reasons.  The structure maps for $\Linf(M,\omega)$ are explicitly given by
        \begin{align*}
          \LeibKstr{2}(\alpha,\beta) &\defeq \iota(v_\alpha,v_\beta)\omega ,
          &\LeibKstr{3}(\alpha,\beta,\gamma) &\defeq - \iota(v_\alpha,v_\beta,v_\gamma)\omega ,
          &\LeibKstr{4}(\alpha,\beta,\gamma,\eta) &\defeq - \iota(v_\alpha,v_\beta,v_\gamma,v_\eta)\omega .
        \end{align*}

        We define a weak morphism $f\colon \Linf(M,\omega) \to \Leib(M,\omega)$ of weak Lie 3-algebras by
        \begin{align*}
          f_1(\alpha) &= \alpha ,
          & f_2(\alpha,\beta) &= - \iota(v_\alpha)\beta ,
          & f_{2;1}(\alpha,\beta) &= 0 ,  \\
          &&&& f_3(\alpha,\beta,\gamma) &= - \iota(v_\alpha,v_\beta)\gamma .
        \end{align*}
        To show that these maps define a weak morphism of weak Lie 3-algebras, we begin by verifying that they indeed
        satisfy \cref{eq:Leib3:f1,eq:Leib3:f2,eq:Leib3:f3,eq:Leib3:f4}, i.e.\@ define a morphism of Leibniz
        3-algebras.

        Clearly $\dEnd f_1 = 0$ and so \cref{eq:Leib3:f1} holds.  When $|\alpha|>0$, \cref{eq:Leib3:f2} is trivially
        satisfied.  In case $|\alpha| = |\beta| = 0$ we obtain
        \begin{align*}
          \big( \dEnd f_2 - f_1\circ\LeibKstr{2} + \LieDstr{2}[\prime]\circ(f_1,f_1) \big)(\alpha,\beta)
            &= \diff^L f_2(\alpha,\beta) - \LeibKstr{2}(\alpha,\beta) + \LieDstr{2}[\prime](\alpha,\beta)  \\
            &= \diff^L( - \iota(v_\alpha)\beta ) - \iota(v_\alpha,v_\beta)\omega + \dL(v_\alpha)\beta ,
        \intertext{which, using the fact that $\beta$ is Hamiltonian and Cartan's formula, can be written as}
            &= - \diff\iota(v_\alpha)\beta - \iota(v_\alpha)\diff\beta + \diff\iota(v_\alpha)\beta + \iota(v_\alpha)\diff\beta
            = 0 ,
        \intertext{while for $|\alpha| = 0$, $|\beta| > 0$ we have}
          \big( \dEnd f_2 - f_1\circ\LeibKstr{2} + \LieDstr{2}[\prime]\circ(f_1,f_1) \big)(\alpha,\beta)
            &= \diff^L f_2(\alpha,\beta) + f_2(\alpha,\diff^L\beta)
              - \LeibKstr{2}(\alpha,\beta) + \LieDstr{2}[\prime](\alpha,\beta)  \\
            &= \diff^L( - \iota(v_\alpha)\beta ) - \iota(v_\alpha)\diff^L\beta
              - \iota(v_\alpha,v_\beta)\omega + \dL(v_\alpha)\beta ,
        \intertext{which, using that $\beta$ is not Hamiltonian, reduces to}
            &= - \diff\iota(v_\alpha)\beta - \iota(v_\alpha)\diff\beta + \dL(v_\alpha)\beta
            = 0 .
        \end{align*}
        Evaluating \cref{eq:Leib3:f3} for $|\alpha| = |\beta| = 0$, we obtain---omitting $f_1=\id$ for brevity from
        now on
        \begin{align*}
            &\big( \dEnd(f_3) - \LeibKstr{3} %+ \LeibKstr{3}[\prime]
              + f_2\circ_2\LeibKstr{2} - f_2\circ_1\LeibKstr{2} - (f_2\circ_2\LeibKstr{2})^{(12)}
              + \LeibKstr{2}[\prime]\circ_2{}f_2 - \LeibKstr{2}[\prime]\circ_1{}f_2
              - \LeibKstr{2}[\prime]\circ_2{}f_2^{(12)} \big)(\alpha,\beta,\gamma)  \\
            \begin{split}
              &\quad= \diff^L f_3(\alpha,\beta,\gamma) - f_3(\alpha,\beta,\diff^L\gamma)
                - \LeibKstr{3}(\alpha,\beta,\gamma)
                + f_2(\alpha,\LeibKstr{2}(\beta,\gamma)) - f_2(\LeibKstr{2}(\alpha,\beta),\gamma)
                - f_2(\beta,\LeibKstr{2}(\alpha,\gamma))  \\
              &\qquad+ \LieDstr{2}[\prime](\alpha,f_2(\beta,\gamma)) - \LieDstr{2}[\prime](f_2(\alpha,\beta),\gamma)
                - \LieDstr{2}[\prime](\beta,f_2(\alpha,\gamma))
            \end{split}  \\
            \begin{split}
              &\quad= \diff^L(-\iota(v_\alpha,v_\beta)\gamma) + \iota(v_\alpha,v_\beta)\diff^L\gamma
                + \iota(v_\alpha,v_\beta,v_\gamma)\omega - \iota(v_\alpha)\iota(v_\beta,v_\gamma)\omega
                + \iota(v_{\LeibKstr{2}(\alpha,\beta)})\gamma + \iota(v_\beta)\iota(v_\alpha,v_\gamma)\omega  \\
              &\qquad- \dL(v_\alpha)\iota(v_\beta)\gamma + \dL(v_\beta)\iota(v_\alpha)\gamma
            \end{split}
        \intertext{which, using the identities $v_{\LeibKstr{2}(\alpha,\beta)} = [v_\alpha,v_\beta]$ and
      $\iota([v_\alpha,v_\beta])\omega = [\dL(v_\alpha),\iota(v_\beta)]\omega$, becomes}
            &\quad= - \diff\iota(v_\alpha,v_\beta)\gamma + \iota(v_\alpha,v_\beta)\diff^L\gamma
              - \iota(v_\alpha,v_\beta,v_\gamma)\omega
              %+ \iota([v_\alpha,v_\beta])\gamma - \iota([v_\alpha,v_\beta])\gamma
              - \iota(v_\beta)\dL(v_\alpha)\gamma + \dL(v_\beta)\iota(v_\alpha)\gamma .
        \intertext{Now consider two cases: when $|\gamma| = 0$, $\diff^L\gamma = 0$ and we have}
            &\quad= - \diff\iota(v_\alpha,v_\beta)\gamma - \iota(v_\alpha,v_\beta,v_\gamma)\omega
              - \iota(v_\alpha,v_\beta)\diff\gamma + \diff\iota(v_\alpha,v_\beta)\gamma
            = 0 ,
        \intertext{while for $|\gamma| > 0$, $\iota(v_\alpha,v_\beta)\gamma = 0$ and $v_\gamma = 0$, and we obtain}
            &\quad= \iota(v_\alpha,v_\beta)\diff\gamma - \iota(v_\alpha,v_\beta)\diff\gamma
            = 0 .
        \end{align*}
        To verify \cref{eq:Leib3:f4}, it is sufficient to consider the case $|\alpha|=|\beta|=|\gamma|=|\eta|=0$.
        Leaving out the terms vanishing for degree reasons, we obtain
        \begin{align*}
          \begin{split}
            &\big( \LeibKstr{4}
                - f_2\circ_2\LeibKstr{3} + (f_2\circ_2\LeibKstr{3})^{(12)} - (f_2\circ_2\LeibKstr{3})^{(123)}
                + f_3\circ_1\LeibKstr{2} - f_3\circ_2\LeibKstr{2} + (f_3\circ_2\LeibKstr{2})^{(12)}  \\
              &\quad+ f_3\circ_3\LeibKstr{2} - (f_3\circ_3\LeibKstr{2})^{(23)} + (f_3\circ_3\LeibKstr{2})^{(132)}
                - \LeibKstr{2}[\prime]\circ_2{}f_3
                + (\LeibKstr{2}[\prime]\circ_2{}f_3)^{(12)} - (\LeibKstr{2}[\prime]\circ_2{}f_3)^{(123)}
              \big)(\alpha,\beta,\gamma,\eta)
          \end{split}  \\
          \begin{split}
            &\quad= \LeibKstr{4}(\alpha,\beta,\gamma,\eta)
              - f_2(\alpha,\LeibKstr{3}(\beta,\gamma,\eta))
              + f_2(\beta,\LeibKstr{3}(\alpha,\gamma,\eta))
              - f_2(\gamma,\LeibKstr{3}(\alpha,\beta,\eta))
              + f_3(\LeibKstr{2}(\alpha,\beta),\gamma,\eta)  \\
            &\qquad- f_3(\alpha,\LeibKstr{2}(\beta,\gamma),\eta)
              + f_3(\beta,\LeibKstr{2}(\alpha,\gamma),\eta)
              + f_3(\alpha,\beta,\LeibKstr{2}(\gamma,\eta))
              - f_3(\alpha,\gamma,\LeibKstr{2}(\beta,\eta))
              + f_3(\beta,\gamma,\LeibKstr{2}(\alpha,\eta))  \\
            &\qquad- \LeibKstr{2}[\prime](\alpha,f_3(\beta,\gamma,\eta))
              + \LeibKstr{2}[\prime](\beta,f_3(\alpha,\gamma,\eta))
              - \LeibKstr{2}[\prime](\gamma,f_3(\alpha,\beta,\eta)) .
          \end{split}
        \intertext{Note that the terms $f_2\circ_2\LeibKstr{3}$ cancel with $f_3\circ_3\LeibKstr{2}$, leaving us
        with}
          \begin{split}
            &\quad= - \iota(v_\alpha,v_\beta,v_\gamma,v_\eta)\omega
          - \iota(v_{\LeibKstr{2}(\alpha,\beta)},v_\gamma)\eta
              + \iota(v_\alpha,v_{\LeibKstr{2}(\beta,\gamma)})\eta
              - \iota(v_\beta,v_{\LeibKstr{2}(\alpha,\gamma)})\eta  \\
            &\qquad+ \dL(v_\alpha)\iota(v_\beta,v_\gamma)\eta
              - \dL(v_\beta)\iota(v_\alpha,v_\gamma)\eta
              + \dL(v_\gamma)\iota(v_\alpha,v_\beta)\eta ,
          \end{split}  \\
          &\quad= - \iota(v_\alpha,v_\beta,v_\gamma,v_\eta)\omega
            + \iota(v_\gamma)\iota(v_\beta)\dL(v_\alpha)\eta
            - \iota(v_\gamma)\dL(v_\beta)\iota(v_\alpha)\eta
            + \dL(v_\gamma)\iota(v_\alpha,v_\beta)\eta  \\
          &\quad= - \iota(v_\alpha,v_\beta,v_\gamma,v_\eta)\omega
            + \iota(v_\gamma)\iota(v_\beta)\iota(v_\alpha)\diff\eta
            + \diff\iota(v_\gamma)\iota(v_\alpha,v_\beta)\eta
          = 0 .
        \end{align*}

        We proceed to show that $f$ additionally satisfies
        \cref{eq:ELie3:f21,eq:ELie3:f211,eq:ELie3:f31,eq:ELie3:f32} and hence defines a morphism of weak Lie
        3-algebras.  For \cref{eq:ELie3:f21} we find
        \begin{align*}
          \big( f_2 + f_2^{(12)} + \LieDstr{2;1}[\prime] \big)(\alpha,\beta)
            = - \iota(v_\alpha)\beta - \iota(v_\beta)\alpha + \iota(v_\alpha)\beta + \iota(v_\beta)\alpha
            = 0 .
        \end{align*}
        Since all involved maps vanish, \cref{eq:ELie3:f211} is trivially satisfied.
        Finally we consider \cref{eq:ELie3:f31,eq:ELie3:f32},
        \begin{align*}
          \big( f_3 + f_3^{(12)} \big)(\alpha,\beta,\gamma)
            &= - \iota(v_\alpha,v_\beta)\gamma - \iota(v_\beta,v_\alpha)\gamma
            = 0 ,  \\
          \big( f_3 + f_3^{(23)} - \LieDstr{2;1}[\prime]\circ_1{}f_2
            - (\LieDstr{2;1}[\prime]\circ_2{}f_2)^{(12)} \big)(\alpha,\beta,\gamma)
            &= - \iota(v_\alpha,v_\beta)\gamma - \iota(v_\alpha,v_\gamma)\beta
              + \iota(v_\gamma)\iota(v_\alpha)\beta + \iota(v_\beta)\iota(v_\alpha)\gamma
            = 0 .
        \end{align*}
        This concludes the proof.
      \end{proof}

    \subsection{Higher Courant algebroids}
      In \cite{liu2016qp}, the notion of an \emph{CLWX 2-algebroid} is introduced as a higher analogue of a Courant
      algebroid.  A Lie 3-algebra is associated to any CLWX 2-algebroid in \cite[Theorem~3.10]{liu2016qp}.
      In this section, we give a construction of a weak Lie 3-algebra for any CLWX 2-algebroid.  We then show that the
      Lie 3-algebra constructed in loc.\@ cit.\@ is in fact the skew-symmetrization of our weak Lie 3-algebra.

      \begin{df}\label{df:CLWX2}
        Let $E = (E_0 \xleftarrow{\partial} E_1)$ be a 2-term dg vector bundle over $M$ equipped with
        a morphism $\rho\colon E \to TM$ of dg vector bundles,
        a (graded) bilinear map $\circ\colon \Gamma E \tensor \Gamma E \to \Gamma E$ which is skew-symmetric on
        $\Gamma E_0 \tensor \Gamma E_0$,
        a (graded) 3-form $\Omega\colon \Gamma E \tensor \Gamma E \tensor \Gamma E \to \Gamma E[1]$,
        and a non-degenerate symmetric bilinear form $S\colon \Gamma E \tensor \Gamma E \to C^\infty(M)$.
        Using these data, we define a map $\mathcal{D}\colon C^\infty(M) \to \Gamma E_1$ by
        \begin{equation}
          S(e,\mathcal{D}f) = \rho(e)(f) , \quad \forall e \in \Gamma E .
          \label{df:CLWX2:D}
        \end{equation}
        We call $\mathcal{E} = (E_0 \xleftarrow{\partial} E_1,\rho,\circ,\Omega,S)$ an \emph{CLWX 2-algebroid}, if the
        following conditions are satisfied:
        \begin{conditions}
          \item $E_0$, $E_1$ are isotropic, i.e.\@ $S(E_i,E_i) = 0$ for $i=0,1$,
          \item $(\Gamma E_0 \xleftarrow{\partial} \Gamma E_1, \circ, \Omega)$ is a Leibniz 2-algebra,
            \label{df:CLWX2:Leib2}
          \item $e \circ e = \frac{1}{2} \mathcal{D} S(e,e)$ for all $e \in \Gamma E$,
            \label{df:CLWX2:l21:01}
          \item $S(\partial e_1, e_2) = S(e_1, \partial e_2)$ for all $e_i \in \Gamma E$,
            \label{df:CLWX2:l21:11}
          \item $\rho(e_1)S(e_2,e_3) = S(e_1 \circ e_2, e_3) + S(e_2, e_1 \circ e_3)$ for all $e_i \in \Gamma E$, and
            \label{df:CLWX2:l32}
          \item $S(\Omega(e_1,e_2,e_3),e_4) = -S(e_3,\Omega(e_1,e_2,e_4))$ for all $e_i \in \Gamma E$.
            \label{df:CLWX2:l43}
        \end{conditions}
      \end{df}

      \begin{pp}\label{pp:CLWX2:ELie3}
        For any CLWX 2-algebroid, there is an associated complex
        \begin{equation*}
          (L,\diff) \defeq \left(
              \begin{tikzcd}
                \Gamma E_0 & \ar[l,"\partial"'] \Gamma E_1 & \ar[l,"\mathcal{D}"'] C^\infty(M)
              \end{tikzcd}
            \right) ,
        \end{equation*}
        which, equipped with structure maps
        \begin{align*}
          \LieDstr{2} &= (-\circ-) + S\circ_2\mathcal{D} ,
          & \LieDstr{2;1} &= S ,
          & \LieDstr{2;1,1} &= 0 ,  \\
          && \LieDstr{3} &= \Omega ,
          & \LieDstr{3;1} &= 0 ,  \\
          &&&& \LieDstr{3;2} &= 0 ,  \\
          &&&& \LieDstr{4} &= 0 ,
        \end{align*}
        defines a weak Lie 3-algebra.
      \end{pp}

      \begin{proof}
        The fact that $(L,\diff)$ is indeed a complex is equivalent to $\partial\mathcal{D} = 0$, which is shown in
        \cite[Lemma 3.6]{liu2016qp}.

        We begin by showing that $(L,\diff,\LieDtw)$ is a Leibniz 3-algebra, i.e.\@ by checking
        \cref{eq:Leib3:l2,eq:Leib3:l3,eq:Leib3:l4,eq:Leib3:l5}.  Since, by \cref{df:CLWX2:Leib2}, we already know that
        the 2-term truncation of $L$ forms a Leibniz 2-algebra, it is sufficient to verify that these equations hold
        on tuples containing at least one degree 2 element.  To keep notation as simple as possible, we denote
        arbitrary sections of $E$ by $e \in \Gamma E$ and indicate degree using superscript, i.e.\@
        $e^i \in \Gamma E_i$, and use $f$ for functions $f \in L_2 = C^\infty(M)$.  We use subscript to distinguish
        multiple elements of the same degree, e.g.\@ $e^0_1$ and $e^0_2$.

        Evaluating \cref{eq:Leib3:l2}, we obtain
        \begin{align}
          \partial\big(\LieDstr{2}\big)(e^0,f) &= \diff\big(\LieDstr{2}(e^0,f)\big) - \LieDstr{2}(e^0,\mathcal{D}f)
            = \mathcal{D}S(e^0,\mathcal{D}f) - e^0\circ\mathcal{D}f = 0 ,
            \label{eq:CLWX2:ELie3:l2:02}
        \intertext{and}
          \partial\big(\LieDstr{2}\big)(f,e^0) &= - \LieDstr{2}(\mathcal{D}f,e^0) = - \mathcal{D}f\circ e^0 = 0 ,
            \label{eq:CLWX2:ELie3:l2:20}
        \intertext{which are both shown to hold in \cite[Lemma 3.6]{liu2016qp}.  In addition we have}
          \partial\big(\LieDstr{2}\big)(e^1,f) &= - \LieDstr{2}(\partial e^1,f)
            = - S(\partial e^0,\mathcal{D}f)
            = - S(e^0,\partial\mathcal{D}f) = 0 ,
        \end{align}
        which holds since $\partial\mathcal{D} = 0$, as we have seen above.  In the remaining cases, i.e.\@ for
        $(f,e^1)$ and $(f_1,f_2)$, the equation holds trivially since all structure maps vanish.
        For \cref{eq:Leib3:l3} we obtain
        \begin{align*}
          &\big( \partial\LieDstr{3} - \LieDstr{2}\circ_2\LieDstr{2} + \LieDstr{2}\circ_1\LieDstr{2}
            + (\LieDstr{2}\circ_2\LieDstr{2})^{(12)} \big)(e^0_1,e^0_2,f)  \\
          &\quad= - \LieDstr{2}(e^0_1,\LieDstr{2}(e^0_2,f)) + \LieDstr{2}(\LieDstr{2}(e^0_1,e^0_2),f)
            + \LieDstr{2}(e^0_2,\LieDstr{2}(e^0_1,f))  \\
          &\quad= - S(e^0_1,\mathcal{D}S(e^0_2,\mathcal{D}f)) + S(e^0_1\circ e^0_2,\mathcal{D}f)
            + S(e^0_2,\mathcal{D}S(e^0_1,\mathcal{D}f)) ,
          \intertext{which, by \cref{eq:CLWX2:ELie3:l2:02}, can be written as}
          &\quad= - S(e^0_1,\mathcal{D}S(e^0_2,\mathcal{D}f)) + S(e^0_1\circ e^0_2,\mathcal{D}f)
            + S(e^0_2,e^0_1\circ\mathcal{D}f) ,
          \intertext{and using \cref{df:CLWX2:l32} becomes}
          &\quad= - S(e^0_1,\mathcal{D}S(e^0_2,\mathcal{D}f)) + \rho(e^0_1)S(e^0_2,\mathcal{D}f) = 0 ,
        \end{align*}
        where the last equality holds by definition of $\mathcal{D}$.
        Since \cref{eq:Leib3:l4} is of degree 1, it holds for degree reasons when evaluated on any tuple containing a
        degree 2 element.  Similarly, \cref{eq:Leib3:l5} is always satisfied for degree reasons.  This completes the
        proof that $L$ is a Leibniz 3-algebra.

        We proceed to show that $L$ is in fact a weak Lie 3-algebra.  Consider \cref{eq:ELie3:l21}.
        By isotropy of $E_0$ and skew-symmetry of $\circ$ on $L_0 \tensor L_0$ we see that
        \begin{align*}
          \big( \partial(\LieDstr{2;1}) - \LieDstr{2} - \LieDstr{2}[(12)] \big)(e^0_1,e^0_2)
            &= \partial S(e^0_1,e^0_2) - e^0_1 \circ e^0_2 - e^0_2 \circ e^0_1 = 0 .
        \intertext{In addition we find}
          \big( \partial(\LieDstr{2;1}) - \LieDstr{2} - \LieDstr{2}[(12)] \big)(e^0,e^1)
            &= \mathcal{D} S(e^0,e^1) - e^0 \circ e^1 - e^1 \circ e^0  \\
            &= \frac{1}{2} \mathcal{D} S(e^0+e^1,e^0+e^1) - (e^0+e^1) \circ (e^0+e^1) = 0 ,
        \intertext{which holds using \cref{df:CLWX2:l21:01}, and}
          \big( \partial(\LieDstr{2;1}) - \LieDstr{2} - \LieDstr{2}[(12)] \big)(e^1,e^1)
            &= S(\partial e^1_1, e^1_2) - S(e^1_1, \partial e^1_2) = 0 ,
        \intertext{which holds by \cref{df:CLWX2:l21:11}.  In the remaining cases the equation is trivially satisfied,
        e.g.}
          \big( \partial(\LieDstr{2;1}) - \LieDstr{2} - \LieDstr{2}[(12)] \big)(e^0,f)
            &= S(e^0,\mathcal{D}f) - S(e^0,\mathcal{D}f) = 0 .
        \end{align*}
        Next, we consider \cref{eq:ELie3:l211}.  Since $\LieDstr{2;1,1} = 0$, the only non-trivial case is
        \begin{equation*}
          \big( \partial(\LieDstr{2;1,1}) + \LieDstr{2;1} - \LieDstr{2;1}[(12)] \big)(e^0,e^1)
            = S(e^0,e^1) - S(e^1,e^0) = 0 ,
        \end{equation*}
        which holds by symmetry of $S$.  For \cref{eq:ELie3:l31,eq:ELie3:l32} we obtain
        \begin{equation*}
          \big( \partial(\LieDstr{3;1}) - \LieDstr{3} - \LieDstr{3}[(12)] - \LieDstr{2}\circ_1\LieDstr{2;1} \big)
              (e^0_1,e^0_2,e^0_3)
            = - \Omega(e^0_1,e^0_2,e^0_3) - \Omega(e^0_2,e^0_1,e^0_3) = 0 ,
        \end{equation*}
        which holds by skew-symmetry of $\Omega$, and
        \begin{align*}
          &\big( \partial(\LieDstr{3;2}) - \LieDstr{3} - \LieDstr{3}[(23)] + \LieDstr{2}\circ_2\LieDstr{2;1}
              - \LieDstr{2;1}\circ_1\LieDstr{2} - (\LieDstr{2;1}\circ_2\LieDstr{2})^{(12)} \big)(e^0_1,e^0_2,e^0_3) \\
          &\quad= - \Omega(e^0_1,e^0_2,e^0_3) - \Omega(e^0_1,e^0_3,e^0_2)
              + S(e^0_1,\mathcal{D}S(e^0_2,e^0_3)) - S(e^0_1\circ{}e^0_2,e^0_3) - S(e^0_2,e^0_1\circ{}e^0_3) = 0 ,
        \end{align*}
        which follows again from skew-symmetry of $\Omega$ and using \cref{df:CLWX2:l32}.
        \Cref{eq:ELie3:l2111,eq:ELie3:l311,eq:ELie3:l312,eq:ELie3:l322,eq:ELie3:l41,eq:ELie3:l42} hold trivially since
        each term vanishes for degree reasons or because one of the relevant structure maps is zero.  Finally,
        \cref{eq:ELie3:l43} reduces to
        \begin{equation*}
          \big( \LieDstr{2;1}\circ_1\LieDstr{3} + (\LieDstr{2;1}\circ_2\LieDstr{3})^{(123)} \big)
              (e^0_1,e^0_2,e^0_3,e^0_4)
            = S(\Omega(e^0_1,e^0_2,e^0_3),e^0_4) + S(e^0_3,\Omega(e^0_1,e^0_2,e^0_4)) = 0 ,
        \end{equation*}
        which is precisely \cref{df:CLWX2:l43}.
      \end{proof}

      \begin{cor}\label{cor:CLWX2:Lie3}
        The skew-symmetrization $\SSy{L}$ of $L$ is a Lie 3-algebra.  Its structure maps are explicitly given by
        \begin{align*}
          \LieKstr{2} &= \frac{1}{2} \left( (-\circ-) - (-\circ-)^{(12)}
              - S\circ_1{}\mathcal{D} + S\circ_2{}\mathcal{D} \right) ,  \\
          \LieKstr{3} &= \Omega - \frac{1}{12} \sum_{\sigma\in\Sy_3} (-1)^{|\sigma|} \cdot
                \big( S \circ_1 (-\circ-) \big)^\sigma ,  \\
          \LieKstr{4} &= S \circ_1 \Omega .
        \end{align*}
      \end{cor}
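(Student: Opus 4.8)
The plan is to deduce the corollary directly from the general skew-symmetrization construction of \Cref{S:SS:ELie3}, applied to the weak Lie $3$-algebra $L$ of \Cref{pp:CLWX2:ELie3}. That $\SSy{L}$ is a (semi-strict) Lie $3$-algebra needs no separate argument: the twisting morphism $\LieDtw$ of \Cref{pp:CLWX2:ELie3} corresponds to an operad morphism $g_\LieDtw\colon\COBAR\LieD[3]\to\End_L$, and $\Phi^*g_\LieDtw = g_\LieDtw\circ\Phi\colon\COBAR\LieK[3]\to\End_L$ (with $\Phi$ the morphism of \Cref{lm:SS:Phi}) is, by construction, a $3$-term $\Linf$-algebra, i.e.\ exactly the semi-strict Lie $3$-algebra $\SSy{L}$. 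The remaining task is purely computational: substitute the structure maps
\[
  \LieDstr{2} = (-\circ-) + S\circ_2\mathcal D,\quad
  \LieDstr{2;1} = S,\quad
  \LieDstr{2;1,1} = 0,\quad
  \LieDstr{3} = \Omega,\quad
  \LieDstr{3;1} = \LieDstr{3;2} = \LieDstr{4} = 0
\]
from \Cref{pp:CLWX2:ELie3} into the defining formulas \cref{df:SS:l2,df:SS:l3,df:SS:l4} and simplify.

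First I would handle $\LieKstr{2}$: \cref{df:SS:l2} gives $\tfrac12\bigl(\LieDstr{2} - \LieDstr{2}[(12)]\bigr)$, and expanding $\LieDstr{2}$ and using symmetry of $S$ together with the Koszul sign rule (which yields $(S\circ_2\mathcal D)^{(12)} = S\circ_1\mathcal D$) produces the stated expression. For $\LieKstr{3}$, the first sum in \cref{df:SS:l3} is $\tfrac16\sum_{\sigma\in\Sy_3}(-1)^{|\sigma|}\LieDstr{3}[\sigma]$, which equals $\Omega$ because $\LieDstr{3}=\Omega$ is graded skew-symmetric (it is a graded $3$-form and vanishes on any tuple involving $L_2$ for degree reasons). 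In the second sum one observes that $\LieDstr{2;1}=S$ pairs only $L_0$ with $L_1$ and vanishes on $L_2\otimes L$, so $S\circ_i(S\circ_2\mathcal D)=0$ and hence $S\circ_1\LieDstr{2}+S\circ_2\LieDstr{2}$ collapses to $S\circ_1(-\circ-)+S\circ_2(-\circ-)$; symmetry of $S$ then identifies the $\Sy_3$-antisymmetrization of $S\circ_2(-\circ-)$ with that of $S\circ_1(-\circ-)$, leaving $-\tfrac1{12}\sum_{\sigma}(-1)^{|\sigma|}(S\circ_1(-\circ-))^{\sigma}$. Finally, for $\LieKstr{4}$, the vanishing of $\LieDstr{4},\LieDstr{3;1},\LieDstr{3;2}$ reduces \cref{df:SS:l4} to $\tfrac1{48}\sum_{\sigma\in\Sy_4}(-1)^{|\sigma|}(S\circ_1\Omega - S\circ_2\Omega)^{\sigma}$; symmetry of $S$ identifies $\sum_\sigma(-1)^{|\sigma|}(S\circ_2\Omega)^{\sigma}$ with $-\sum_\sigma(-1)^{|\sigma|}(S\circ_1\Omega)^{\sigma}$, and then \cref{df:CLWX2:l43} (after using symmetry of $S$, i.e.\ $S(\Omega(e_1,e_2,e_3),e_4)=-S(\Omega(e_1,e_2,e_4),e_3)$) together with skew-symmetry of $\Omega$ in its first three slots shows that $S\circ_1\Omega$ is already totally skew-symmetric, whence $\tfrac1{24}\sum_{\sigma\in\Sy_4}(-1)^{|\sigma|}(S\circ_1\Omega)^{\sigma}=S\circ_1\Omega$, giving $\LieKstr{4}=S\circ_1\Omega$.

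The step demanding the most care is the bookkeeping of Koszul signs: the maps $S$, $\mathcal D$, $\Omega$ carry nonzero homological degrees ($+1$, $-1$, $+1$ respectively), so each permutation of their arguments introduces a sign depending on the degrees of the inputs, and one must check these are exactly the signs implicit in the $\Sy_n$-actions on $\End_L$ appearing in \cref{df:SS:l2,df:SS:l3,df:SS:l4}. A second, closely related point is to record systematically which a priori possible composites vanish for degree reasons (for instance $S\circ_i(S\circ_2\mathcal D)$, or any term whose output would land outside $L_0\oplus L_1\oplus L_2$) before carrying out the antisymmetrizations; once those vanishings and the symmetry identities for $S$, $\circ$ and $\Omega$ from \Cref{df:CLWX2} are in hand, the three formulas follow immediately, so there is no deep obstacle—only a need for disciplined sign tracking.
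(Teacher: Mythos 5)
Your proposal is correct and follows essentially the same route as the paper: substitute the structure maps of \Cref{pp:CLWX2:ELie3} into \cref{df:SS:l2,df:SS:l3,df:SS:l4} and simplify using the symmetry of $S$, the skew-symmetry of $\Omega$, and \cref{df:CLWX2:l43}. The paper's proof is a one-line appeal to exactly these facts, so your version merely spells out the degree-vanishing and sign bookkeeping that the paper leaves implicit.
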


      \begin{proof}
        Taking into account the skew-symmetry of $\Omega$, the symmetry of $S$, and \cref{df:CLWX2:l43}, it is clear
        that the given terms are precisely the skew-symmetrized structure maps of \Cref{df:SS}.
      \end{proof}

      The structure maps of \Cref{cor:CLWX2:Lie3} are precisely the ones given in \cite[Theorem~3.10]{liu2016qp}.
      Hence, our construction in \Cref{pp:CLWX2:ELie3} gives an alternative proof of the cited theorem, based on the
      methods developed in \Cref{S:SS}.

  \appendix

  \section{Computations}
    This appendix contains some long and tedious computations that were removed from the main text as to not disturb
    its flow.

    \subsection{Proof of Lemma \ref{lm:LieD:coopd}} \label{A:comp:coass}
      We verify that $(\decomp\circ\id)\decomp = (\id\circ\decomp)\decomp$ when evaluated on $\LieDgen{4;1}$,
      $\LieDgen{4;2}$, or $\LieDgen{4;3}$ by computing both sides of the equation individually:
      \begin{align*}
        (\decomp\circ\id)\decomp(\LieDgen{4;1}) &=
          1 \circ (1) \circ (\LieDgen{4;1})
          - \big( 1 \circ (\LieDgen{2})
            + \LieDgen{2} \circ (1,1) \big) \circ (\LieDgen{3;1},1)
          - \big( 1 \circ (\LieDgen{2})
            + \LieDgen{2} \circ (1,1) \big) \circ (1,\LieDgen{3;1})^{(123)}  \\
          &\quad+ \big( 1 \circ (\LieDgen{2})
            + \LieDgen{2} \circ (1,1) \big) \circ (\LieDgen{2;1},\LieDgen{2})
          + \big( 1 \circ (\LieDgen{3})
            - \LieDgen{2} \circ (\LieDgen{2},1)
            + \LieDgen{2} \circ (1,\LieDgen{2})
            - \LieDgen{2} \circ (1,\LieDgen{2})^{(12)}  \\
            &\quad+ \LieDgen{3}\circ(1,1,1) \big) \circ (\LieDgen{2;1},1,1)
          + \big( 1 \circ (\LieDgen{3;1})
            + \LieDgen{2} \circ (\LieDgen{2;1},1)
            + \LieDgen{3;1} \circ (1,1,1) \big) \circ (1,1,\LieDgen{2})  \\
          &\quad+ \big( 1 \circ (\LieDgen{4;1})
            - \LieDgen{2} \circ (\LieDgen{3;1},1)
            - \LieDgen{2} \circ (1,\LieDgen{3;1})^{(123)}
            + \LieDgen{2} \circ (\LieDgen{2;1},\LieDgen{2})
            + \LieDgen{3} \circ (\LieDgen{2;1},1,1)  \\
            &\quad+ \LieDgen{3;1} \circ (1,1,\LieDgen{2})
            + \LieDgen{4;1} \circ (1,1,1,1) \big) \circ (1,1,1,1)  , \\
        (\id\circ\decomp)\decomp(\LieDgen{4;1}) &=
          1 \circ \big( 1 \circ (\LieDgen{4;1})
            - \LieDgen{2} \circ (\LieDgen{3;1},1)
            - \LieDgen{2} \circ (1,\LieDgen{3;1})^{(123)}
            + \LieDgen{2} \circ (\LieDgen{2;1},\LieDgen{2})
            + \LieDgen{3} \circ (\LieDgen{2;1},1,1)  \\
            &\quad+ \LieDgen{3;1} \circ (1,1,\LieDgen{2})
            + \LieDgen{4;1} \circ (1,1,1,1) \big)
          - \LieDgen{2} \circ \big( 1 \circ (\LieDgen{3;1})
            + \LieDgen{2} \circ (\LieDgen{2;1},1)
            + \LieDgen{3;1} \circ (1,1,1), 1 \circ (1) \big)  \\
          &\quad- \LieDgen{2} \circ \big( 1 \circ (1), 1 \circ (\LieDgen{3;1})
            + \LieDgen{2} \circ (\LieDgen{2;1},1)
            + \LieDgen{3;1} \circ (1,1,1) \big)^{(123)}
          + \LieDgen{2} \circ \big( 1 \circ (\LieDgen{2;1})
            + \LieDgen{2;1} \circ (1,1),  \\
            &\quad1 \circ (\LieDgen{2})
            + \LieDgen{2} \circ (1,1) \big)
          + \LieDgen{3} \circ \big( 1 \circ (\LieDgen{2;1})
            + \LieDgen{2;1} \circ (1,1), 1 \circ (1), 1 \circ (1) \big)
          + \LieDgen{3;1} \circ \big( 1 \circ (1), 1 \circ (1),  \\
            &\quad 1 \circ (\LieDgen{2})
            + \LieDgen{2} \circ (1,1) \big)
          + \LieDgen{4;1} \circ \big( 1 \circ (1), 1 \circ (1), 1 \circ (1), 1 \circ (1) \big)  ,
      \intertext{and see (using the associativity of the composite product $\circ$) that they agree.  In the same way
      we compute}
        (\decomp\circ\id)\decomp(\LieDgen{4;2}) &=
          1 \circ (1) \circ (\LieDgen{4;2})
          - \big( 1 \circ (\LieDgen{2}) + \LieDgen{2} \circ (1,1) \big) \circ (\LieDgen{3;2},1)
          - \big( 1 \circ (\LieDgen{2}) + \LieDgen{2} \circ (1,1) \big) \circ (1,\LieDgen{3;1})
          + \big( 1 \circ (\LieDgen{2})  \\
            &\quad+ \LieDgen{2} \circ (1,1) \big) \circ (\LieDgen{2;1},\LieDgen{2})^{(132)}
          + \big( 1 \circ (\LieDgen{3;1})
            + \LieDgen{2} \circ (\LieDgen{2;1},1)
            + \LieDgen{3;1} \circ (1,1,1) \big) \circ (\LieDgen{2},1,1)
          + \big( 1 \circ (\LieDgen{3;1})  \\
            &\quad+ \LieDgen{2} \circ (\LieDgen{2;1},1)
            + \LieDgen{3;1} \circ (1,1,1) \big) \circ (1,\LieDgen{2},1)^{(12)}
          - \big( 1 \circ (\LieDgen{3})
            - \LieDgen{2} \circ (\LieDgen{2},1)
            + \LieDgen{2} \circ (1,\LieDgen{2})
            - \LieDgen{2} \circ (1,\LieDgen{2})^{(12)}  \\
            &\quad+ \LieDgen{3}\circ(1,1,1) \big) \circ (1,\LieDgen{2;1},1)
          + \big( 1 \circ (\LieDgen{3;1})
            + \LieDgen{2} \circ (\LieDgen{2;1},1)
            + \LieDgen{3;1} \circ (1,1,1) \big) \circ (1,1,\LieDgen{2})^{(132)}  \\
          &\quad+ \big( 1 \circ (\LieDgen{4;2})
            - \LieDgen{2}\circ(\LieDgen{3;2},1)
            - \LieDgen{2}\circ(1,\LieDgen{3;1})
            + \LieDgen{2}\circ(\LieDgen{2;1},\LieDgen{2})^{(132)}
            + \LieDgen{3;1}\circ(\LieDgen{2},1,1)
            + \LieDgen{3;1}\circ(1,\LieDgen{2},1)^{(12)}  \\
            &\quad- \LieDgen{3}\circ(1,\LieDgen{2;1},1)
            + \LieDgen{3;1}\circ(1,1,\LieDgen{2})^{(132)}
            + \LieDgen{4;2} \circ (1,1,1,1) \big) \circ (1,1,1,1)  , \\
        (\id\circ\decomp)\decomp(\LieDgen{4;2}) &=
          1 \circ \big( 1 \circ (\LieDgen{4;2})
            - \LieDgen{2}\circ(\LieDgen{3;2},1)
            - \LieDgen{2}\circ(1,\LieDgen{3;1})
            + \LieDgen{2}\circ(\LieDgen{2;1},\LieDgen{2})^{(132)}
            + \LieDgen{3;1}\circ(\LieDgen{2},1,1)
            + \LieDgen{3;1}\circ(1,\LieDgen{2},1)^{(12)}  \\
            &\quad- \LieDgen{3}\circ(1,\LieDgen{2;1},1)
            + \LieDgen{3;1}\circ(1,1,\LieDgen{2})^{(132)}
            + \LieDgen{4;2} \circ (1,1,1,1) \big)
          - \LieDgen{2} \circ \big( 1 \circ (\LieDgen{3;2})
            - \LieDgen{2;1} \circ (\LieDgen{2},1)  \\
            &\quad- \LieDgen{2} \circ (1,\LieDgen{2;1})
            - \LieDgen{2;1} \circ (1,\LieDgen{2})^{(12)}
            + \LieDgen{3;2} \circ (1,1,1)
            , 1 \circ (1) \big)
          - \LieDgen{2} \circ \big( 1 \circ (1),
            1 \circ (\LieDgen{3;1})
            + \LieDgen{2} \circ (\LieDgen{2;1},1)  \\
            &\quad+ \LieDgen{3;1} \circ (1,1,1) \big)
          + \LieDgen{2} \circ \big( 1 \circ (\LieDgen{2;1})
            + \LieDgen{2;1} \circ (1,1) ,
            1 \circ (\LieDgen{2}) + \LieDgen{2} \circ (1,1) \big)^{(132)}  \\
          &\quad+ \LieDgen{3;1} \circ \big( 1 \circ (\LieDgen{2})
            + \LieDgen{2} \circ (1,1) ,
            1 \circ (1),
            1 \circ (1) \big)
          + \LieDgen{3;1} \circ \big( 1 \circ (1),
            1 \circ (\LieDgen{2}) + \LieDgen{2} \circ (1,1) ,
            1 \circ (1) \big)^{(12)}  \\
          &\quad- \LieDgen{3} \circ \big( 1 \circ (1),
            1 \circ (\LieDgen{2;1}) + \LieDgen{2;1} \circ (1,1) ,
            1 \circ (1) \big)
          + \LieDgen{3;1} \circ \big( 1 \circ (1),
            1 \circ (1),
            1 \circ (\LieDgen{2}) + \LieDgen{2} \circ (1,1) \big)^{(132)}  \\
          &\quad+ \LieDgen{4;2} \circ \big( 1 \circ (1),
            1 \circ (1),
            1 \circ (1),
            1 \circ (1) \big)  ,
        \intertext{and}
        (\decomp\circ\id)\decomp(\LieDgen{4;3}) &=
          1 \circ (1) \circ (\LieDgen{4;3})
            + \big( 1 \circ (\LieDgen{2;1}) + \LieDgen{2;1} \circ (1,1) \big) \circ (1,\LieDgen{3})^{(123)}
            - \big( 1 \circ (\LieDgen{2}) + \LieDgen{2} \circ (1,1) \big) \circ (1,\LieDgen{3;2})
            + \big( 1 \circ (\LieDgen{2})  \\
              &\quad+ \LieDgen{2} \circ (1,1) \big) \circ (1,\LieDgen{3;2})^{(12)}
            + \big( 1 \circ (\LieDgen{2;1}) + \LieDgen{2;1} \circ (1,1) \big) \circ (\LieDgen{3},1)
            - \big( 1 \circ (\LieDgen{2}) + \LieDgen{2} \circ (1,1) \big) \circ (\LieDgen{2},\LieDgen{2;1})  \\
            &\quad- \big( 1 \circ (\LieDgen{2;1}) + \LieDgen{2;1} \circ (1,1) \big)
              \circ (\LieDgen{2},\LieDgen{2})^{(132)}
            + \big( 1 \circ (\LieDgen{2;1}) + \LieDgen{2;1} \circ (1,1) \big)
              \circ (\LieDgen{2},\LieDgen{2})^{(23)}  \\
            &\quad+ \big( 1 \circ (\LieDgen{3;2})
              - \LieDgen{2;1} \circ (\LieDgen{2},1)
              - \LieDgen{2} \circ (1,\LieDgen{2;1})
              - \LieDgen{2;1} \circ (1,\LieDgen{2})^{(12)}
              + \LieDgen{3;2} \circ (1,1,1) \big) \circ (\LieDgen{2},1,1)  \\
            &\quad- \big( 1 \circ (\LieDgen{3;2})
              - \LieDgen{2;1} \circ (\LieDgen{2},1)
              - \LieDgen{2} \circ (1,\LieDgen{2;1})
              - \LieDgen{2;1} \circ (1,\LieDgen{2})^{(12)}
              + \LieDgen{3;2} \circ (1,1,1) \big) \circ (1,1,\LieDgen{2})^{(23)}  \\
            &\quad+ \big( 1 \circ (\LieDgen{3;2})
              - \LieDgen{2;1} \circ (\LieDgen{2},1)
              - \LieDgen{2} \circ (1,\LieDgen{2;1})
              - \LieDgen{2;1} \circ (1,\LieDgen{2})^{(12)}
              + \LieDgen{3;2} \circ (1,1,1) \big) \circ (1,1,\LieDgen{2})^{(132)}  \\
            &\quad+ \big( 1 \circ (\LieDgen{3})
              - \LieDgen{2} \circ (\LieDgen{2},1)
              + \LieDgen{2} \circ (1,\LieDgen{2})
              - \LieDgen{2} \circ (1,\LieDgen{2})^{(12)}
              + \LieDgen{3} \circ (1,1,1) \big) \circ (1,1,\LieDgen{2;1})  \\
            &\quad- \big( 1 \circ (\LieDgen{3;2})
              - \LieDgen{2;1} \circ (\LieDgen{2},1)
              - \LieDgen{2} \circ (1,\LieDgen{2;1})
              - \LieDgen{2;1} \circ (1,\LieDgen{2})^{(12)}
              + \LieDgen{3;2} \circ (1,1,1) \big) \circ (1,\LieDgen{2},1)  \\
            &\quad+ \big( 1 \circ (\LieDgen{3;2})
              - \LieDgen{2;1} \circ (\LieDgen{2},1)
              - \LieDgen{2} \circ (1,\LieDgen{2;1})
              - \LieDgen{2;1} \circ (1,\LieDgen{2})^{(12)}
              + \LieDgen{3;2} \circ (1,1,1) \big) \circ (1,\LieDgen{2},1)^{(12)}  \\
          &\quad+ \big( 1 \circ (\LieDgen{4;3})
            + \LieDgen{2;1} \circ (1,\LieDgen{3})^{(123)}
            - \LieDgen{2} \circ (1,\LieDgen{3;2})
            + \LieDgen{2} \circ (1,\LieDgen{3;2})^{(12)}
            + \LieDgen{2;1} \circ (\LieDgen{3},1)
            - \LieDgen{2} \circ (\LieDgen{2},\LieDgen{2;1})  \\
            &\quad- \LieDgen{2;1} \circ (\LieDgen{2},\LieDgen{2})^{(132)}
            + \LieDgen{2;1} \circ (\LieDgen{2},\LieDgen{2})^{(23)}
            + \LieDgen{3;2} \circ (\LieDgen{2},1,1)
            - \LieDgen{3;2} \circ (1,1,\LieDgen{2})^{(23)}
            + \LieDgen{3;2} \circ (1,1,\LieDgen{2})^{(132)}  \\
            &\quad+ \LieDgen{3} \circ (1,1,\LieDgen{2;1})
            - \LieDgen{3;2} \circ (1,\LieDgen{2},1)
            + \LieDgen{3;2} \circ (1,\LieDgen{2},1)^{(12)}
            + \LieDgen{4;3} \circ (1,1,1,1) \big) \circ (1,1,1,1)  , \\
        (\id\circ\decomp)\decomp(\LieDgen{4;3}) &=
          1 \circ \big( 1 \circ (\LieDgen{4;3})
            + \LieDgen{2;1} \circ (1,\LieDgen{3})^{(123)}
            - \LieDgen{2} \circ (1,\LieDgen{3;2})
            + \LieDgen{2} \circ (1,\LieDgen{3;2})^{(12)}
            + \LieDgen{2;1} \circ (\LieDgen{3},1)
            - \LieDgen{2} \circ (\LieDgen{2},\LieDgen{2;1})  \\
            &\quad- \LieDgen{2;1} \circ (\LieDgen{2},\LieDgen{2})^{(132)}
            + \LieDgen{2;1} \circ (\LieDgen{2},\LieDgen{2})^{(23)}
            + \LieDgen{3;2} \circ (\LieDgen{2},1,1)
            - \LieDgen{3;2} \circ (1,1,\LieDgen{2})^{(23)}
            + \LieDgen{3;2} \circ (1,1,\LieDgen{2})^{(132)}  \\
            &\quad+ \LieDgen{3} \circ (1,1,\LieDgen{2;1})
            - \LieDgen{3;2} \circ (1,\LieDgen{2},1)
            + \LieDgen{3;2} \circ (1,\LieDgen{2},1)^{(12)}
            + \LieDgen{4;3} \circ (1,1,1,1) \big)  \\
          &\quad+ \LieDgen{2;1} \circ \big(
            1 \circ (1) ,
            1 \circ (\LieDgen{3})
              - \LieDgen{2} \circ (\LieDgen{2},1)
              + \LieDgen{2} \circ (1,\LieDgen{2})
              - \LieDgen{2} \circ (1,\LieDgen{2})^{(12)}
              + \LieDgen{3} \circ (1,1,1)
            \big)^{(123)}  \\
          &\quad- \LieDgen{2} \circ \big(
            1 \circ (1) ,
            1 \circ (\LieDgen{3;2})
              - \LieDgen{2;1} \circ (\LieDgen{2},1)
              - \LieDgen{2} \circ (1,\LieDgen{2;1})
              - \LieDgen{2;1} \circ (1,\LieDgen{2})^{(12)}
              + \LieDgen{3;2} \circ (1,1,1) \big)  \\
          &\quad+ \LieDgen{2} \circ \big(
            1 \circ (1) ,
            1 \circ (\LieDgen{3;2})
              - \LieDgen{2;1} \circ (\LieDgen{2},1)
              - \LieDgen{2} \circ (1,\LieDgen{2;1})
              - \LieDgen{2;1} \circ (1,\LieDgen{2})^{(12)}
              + \LieDgen{3;2} \circ (1,1,1) \big)^{(12)}  \\
          &\quad+ \LieDgen{2;1} \circ \big(
            1 \circ (\LieDgen{3})
              - \LieDgen{2} \circ (\LieDgen{2},1)
              + \LieDgen{2} \circ (1,\LieDgen{2})
              - \LieDgen{2} \circ (1,\LieDgen{2})^{(12)}
              + \LieDgen{3} \circ (1,1,1) ,
            1 \circ (1) \big)  \\
          &\quad- \LieDgen{2} \circ \big(
            1 \circ (\LieDgen{2}) + \LieDgen{2} \circ (1,1) ,
            1 \circ (\LieDgen{2;1}) + \LieDgen{2;1} \circ (1,1) \big)
          - \LieDgen{2;1} \circ \big(
            1 \circ (\LieDgen{2}) + \LieDgen{2} \circ (1,1) ,  \\
            &\quad1 \circ (\LieDgen{2}) + \LieDgen{2} \circ (1,1) \big)^{(132)}
          + \LieDgen{2;1} \circ \big(
            1 \circ (\LieDgen{2}) + \LieDgen{2} \circ (1,1) ,
            1 \circ (\LieDgen{2}) + \LieDgen{2} \circ (1,1) \big)^{(23)}  \\
          &\quad+ \LieDgen{3;2} \circ \big(
            1 \circ (\LieDgen{2}) + \LieDgen{2} \circ (1,1) ,
            1 \circ (1) ,
            1 \circ (1) \big)
          - \LieDgen{3;2} \circ \big(
            1 \circ (1) ,
            1 \circ (1) ,
            1 \circ (\LieDgen{2}) + \LieDgen{2} \circ (1,1) \big)^{(23)}  \\
          &\quad+ \LieDgen{3;2} \circ \big(
            1 \circ (1) ,
            1 \circ (1) ,
            1 \circ (\LieDgen{2}) + \LieDgen{2} \circ (1,1) \big)^{(132)}
          + \LieDgen{3} \circ \big(
            1 \circ (1) ,
            1 \circ (1) ,
            1 \circ (\LieDgen{2;1}) + \LieDgen{2;1} \circ (1,1) \big)  \\
          &\quad- \LieDgen{3;2} \circ \big(
            1 \circ (1) ,
            1 \circ (\LieDgen{2}) + \LieDgen{2} \circ (1,1) ,
            1 \circ (1) \big)
          + \LieDgen{3;2} \circ \big(
            1 \circ (1) ,
            1 \circ (\LieDgen{2}) + \LieDgen{2} \circ (1,1) ,
            1 \circ (1) \big)^{(12)}  \\
          &\quad+ \LieDgen{4;3} \circ \big(
            1 \circ (1),
            1 \circ (1),
            1 \circ (1),
            1 \circ (1) \big)  ,
      \end{align*}
      and again we see that both sides agree.  This concludes the proof of \Cref{lm:LieD:coopd}.

    \subsection{Proof of Lemma \ref{lm:htt:str}} \label{A:comp:htt}
      We verify that the structure maps defined in \cref{eq:htt:l21,eq:htt:l211,eq:htt:l31,eq:htt:l32} satisfy
      \cref{eq:ELie3:l21,eq:ELie3:l211,eq:ELie3:l31,eq:ELie3:l32,eq:ELie3:l2111,eq:ELie3:l311,eq:ELie3:l312,eq:ELie3:l322,eq:ELie3:l41,eq:ELie3:l42,eq:ELie3:l43}
      and hence form a weak Lie 3-algebra:
      \begin{align*}
        \dEnd(\LieDstr{2;1}[\prime]) &= p \circ (\dEnd\LieDstr{2;1}) \circ (i,i)
          = p \circ \big( \LieDstr{2} + \LieDstr{2}[(12)] \big) \circ (i,i)
          = \LieDstr{2}[\prime] + \LieDstr{2}[\prime(12)] ,  \\
      %%%
        \dEnd(\LieDstr{2;1,1}[\prime]) &= p \circ (\dEnd\LieDstr{2;1,1}) \circ (i,i)
          = p \circ \big( \LieDstr{2;1} - \LieDstr{2;1}[(12)] \big) \circ (i,i)
          = \LieDstr{2;1}[\prime] - \LieDstr{2;1}[\prime(12)] ,  \\
      %%%
        \dEnd(\LieDstr{3;1}[\prime]) &= p \circ \big( \dEnd\LieDstr{3;1} \big) \circ (i,i,i)
          - p \circ \dEnd\big( \LieDstr{2}\circ_1(h\circ\LieDstr{2;1}) \big) \circ (i,i,i)  \\
        &= p \circ \big( \LieDstr{3} + \LieDstr{3}[(12)] + \LieDstr{2}\circ_1\LieDstr{2;1} \big) \circ (i,i,i)  \\
          &\quad- p \circ \big(
              \LieDstr{2}\circ_1(\dEnd h\circ\LieDstr{2;1})
              - \LieDstr{2}\circ_1(h\circ\dEnd\LieDstr{2;1})
            \big) \circ (i,i,i)  \\
        &= p \circ \big(
              \LieDstr{3} + \LieDstr{2}\circ_1(h\circ\LieDstr{2})
            \big) \circ (i,i,i)
          + p \circ \big(
              \LieDstr{3}[(12)] + (\LieDstr{2}\circ_1(h\circ\LieDstr{2}))^{(12)}
            \big) \circ (i,i,i)  \\
          &\quad+ p \circ \big( \LieDstr{2}\circ_1((\id - \dEnd h)\LieDstr{2;1}) \big) \circ (i,i,i)  \\
        &= \LieDstr{3}[\prime] + {\LieDstr{3}[\prime]}^{(12)} + \LieDstr{2}[\prime]\circ_1\LieDstr{2;1}[\prime] \\
      %%%
        \dEnd(\LieDstr{3;2}[\prime]) &= p \circ \big( \dEnd\LieDstr{3;2} \big) \circ (i,i,i)
          + p \circ \dEnd\big(
              \LieDstr{2;1}\circ_1(h\circ\LieDstr{2})
              + \LieDstr{2}\circ_2(h\circ\LieDstr{2;1})
              + (\LieDstr{2;1}\circ_2(h\circ\LieDstr{2}))^{(12)}
            \big) \circ (i,i,i)  \\
        &= p \circ \big( \LieDstr{3} + \LieDstr{3}[(23)]
            + \LieDstr{2;1}\circ_1(h\circ\LieDstr{2})
            - \LieDstr{2}\circ_2(h\circ\LieDstr{2;1})
            + (\LieDstr{2;1}\circ_2(h\circ\LieDstr{2}))^{(12)}
          \big) \circ (i,i,i)  \\
          &\quad+ p \circ \left(\begin{aligned}
              &\dEnd\LieDstr{2;1}\circ_1(h\circ\LieDstr{2})
              + \LieDstr{2}\circ_2(\dEnd h\circ\LieDstr{2;1})
              + (\dEnd\LieDstr{2;1}\circ_2(h\circ\LieDstr{2}))^{(12)}  \\
              &- \LieDstr{2;1}\circ_1(\dEnd h\circ\LieDstr{2})
              - \LieDstr{2}\circ_2(h\circ\dEnd\LieDstr{2;1})
              - (\LieDstr{2;1}\circ_2(\dEnd h\circ\LieDstr{2}))^{(12)}
            \end{aligned}\right) \circ (i,i,i)  \\
        &= p \circ \big(
              \LieDstr{3}
              + \LieDstr{2}\circ_1(h\circ\LieDstr{2})
              - \LieDstr{2}\circ_2(h\circ\LieDstr{2})
              + (\LieDstr{2}\circ_2(h\circ\LieDstr{2}))^{(12)}
            \big) \circ (i,i,i)  \\
          &\quad+ p \circ \big(
              \LieDstr{3}[(23)]
              + \LieDstr{2}[(12)]\circ_1(h\circ\LieDstr{2})
              - \LieDstr{2}\circ_2(h\circ\LieDstr{2}[(12)])
              + (\LieDstr{2}[(12)]\circ_2(h\circ\LieDstr{2}))^{(12)}
            \big) \circ (i,i,i)  \\
          &\quad+ p \circ \big(
              \LieDstr{2;1}\circ_1((\id - \dEnd h)\circ\LieDstr{2})
              - \LieDstr{2}\circ_2((\id - \dEnd h)\circ\LieDstr{2;1})
              + (\LieDstr{2;1}\circ_2((\id - \dEnd h)\circ\LieDstr{2}))^{(12)}
            \big) \circ (i,i,i)  \\
        &= \LieDstr{3}[\prime] + {\LieDstr{3}[\prime]}^{(23)}
          + \LieDstr{2;1}[\prime]\circ_1\LieDstr{2}[\prime]
          - \LieDstr{2}[\prime]\circ_2\LieDstr{2;1}[\prime]
          + (\LieDstr{2;1}[\prime]\circ_2\LieDstr{2}[\prime])^{(12)} ,  \\
      %%%
      \LieDstr{2;1,1}[\prime] + \LieDstr{2;1,1}[\prime(12)]
        &= p \circ \big( \LieDstr{2;1,1} + \LieDstr{2;1,1}[(12)] \big) \circ (i,i) = 0 ,  \\
      %%%
        \LieDstr{3;1}[\prime] - \LieDstr{3;1}[\prime(12)]
        &= p \circ \big( \LieDstr{3;1} - \LieDstr{3;1}[(12)] \big) \circ (i,i,i)
          - p \circ \big(
              \LieDstr{2}\circ_1(h\circ\LieDstr{2;1}) - (\LieDstr{2}\circ_1(h\circ\LieDstr{2;1}))^{(12)}
            \big) \circ (i,i,i)  \\
        &= p \circ (\LieDstr{2}\circ_1\LieDstr{2;1,1}) \circ (i,i,i)
          - p \circ ( \LieDstr{2}\circ_1(h\circ\dEnd\LieDstr{2;1,1}) ) \circ (i,i,i)  \\
        &= p \circ \big(\LieDstr{2}\circ_1((\id - \dEnd h) \LieDstr{2;1,1})\big) \circ (i,i,i)  \\
        &= \LieDstr{2}[\prime]\circ_1\LieDstr{2;1,1}[\prime] ,  \\
      %%%
      \hspace{2em}&\hspace{-2em}
        \LieDstr{3;1}[\prime]
        - \LieDstr{3;2}[\prime(12)]
        + \LieDstr{3;1}[\prime(132)]
        - \LieDstr{3;2}[\prime]
        + \LieDstr{3;1}[\prime(23)]
        - \LieDstr{3;2}[\prime(123)]  \\
        &= p \circ \big( 
            \LieDstr{3;1}
            - \LieDstr{3;2}[(12)]
            + \LieDstr{3;1}[(132)]
            - \LieDstr{3;2}
            + \LieDstr{3;1}[(23)]
            - \LieDstr{3;2}[(123)]
          \big) \circ (i,i,i)  \\
          &\quad- p \circ \big(\LieDstr{2}\circ_1(h\circ\LieDstr{2;1})\big)^{1+(23)+(132)} \circ (i,i,i)  \\
          &\quad- p \circ \big(
              \LieDstr{2;1}\circ_1(h\circ\LieDstr{2})
              + \LieDstr{2}\circ_2(h\circ\LieDstr{2;1})
              + (\LieDstr{2;1}\circ_2(h\circ\LieDstr{2}))^{(12)}
            \big)^{1+(12)+(123)} \circ (i,i,i)  \\
        &= p \circ \big(
            \LieDstr{2;1}\circ_1\LieDstr{2;1}
            + (\LieDstr{2;1}\circ_2\LieDstr{2;1})^{1+(12)}
            + (\LieDstr{2;1,1}\circ_2\LieDstr{2})^{(132)}
          \big) \circ (i,i,i)  \\
          &\quad- p \circ \left(\begin{aligned}
              &\dEnd\LieDstr{2;1}\circ_1(h\circ\LieDstr{2;1})
              + \LieDstr{2;1}\circ_1(h\circ\dEnd\LieDstr{2;1})  \\
              &+ \dEnd\LieDstr{2;1}\circ_2(h\circ\LieDstr{2;1})
              + \LieDstr{2;1}\circ_2(h\circ\dEnd\LieDstr{2;1})  \\
              &+ (\dEnd\LieDstr{2;1}\circ_2(h\circ\LieDstr{2;1}))^{(12)}
              + (\LieDstr{2;1}\circ_2(h\circ\dEnd\LieDstr{2;1}))^{(12)}  \\
              &+ (\dEnd\LieDstr{2;1,1}\circ_2(h\circ\LieDstr{2}))^{(132)}
            \end{aligned}\right)  \\
        &= \LieDstr{2;1}[\prime]\circ_1\LieDstr{2;1}[\prime]
          + (\LieDstr{2;1}[\prime]\circ_2\LieDstr{2;1}[\prime])^{1+(12)}
          + (\LieDstr{2;1,1}[\prime]\circ_2\LieDstr{2}[\prime])^{(132)} ,  \\
      %%%
        \LieDstr{3;2}[\prime] - \LieDstr{3;2}[\prime(23)]
        &= p \circ \big( \LieDstr{3;2} - \LieDstr{3;2}[(23)] \big) \circ (i,i,i)  \\
          &\quad+ p \circ \left(\begin{aligned}
              &\LieDstr{2;1}\circ_1(h\circ\LieDstr{2})
              + \LieDstr{2}\circ_2(h\circ\LieDstr{2;1})
              + (\LieDstr{2;1}\circ_2(h\circ\LieDstr{2}))^{(12)}  \\
              &- (\LieDstr{2;1}\circ_1(h\circ\LieDstr{2}))^{(23)}
              - (\LieDstr{2}\circ_2(h\circ\LieDstr{2;1}))^{(23)}
              - (\LieDstr{2;1}\circ_2(h\circ\LieDstr{2}))^{(123)}
            \end{aligned}\right) \circ (i,i,i)  \\
        &= p \circ \big(
            \LieDstr{2;1,1}\circ_1\LieDstr{2}
            - \LieDstr{2}\circ_2\LieDstr{2;1,1}
            + (\LieDstr{2}\circ_2\LieDstr{2;1,1})^{(12)}
          \big) \circ (i,i,i)  \\
          &\quad- p \circ \big(
              \dEnd\LieDstr{2;1,1}\circ_1(h\circ\LieDstr{2})
              + \LieDstr{2}\circ_2(h\circ\dEnd\LieDstr{2;1,1})
              + ( \dEnd\LieDstr{2;1,1}\circ_2(h\circ\LieDstr{2}) )^{(12)}
            \big) \circ (i,i,i)  \\
        &= p \circ \big(
            \LieDstr{2;1,1}\circ_1\LieDstr{2}
            - \LieDstr{2}\circ_2\LieDstr{2;1,1}
            + (\LieDstr{2;1,1}\circ_2\LieDstr{2})^{(12)}
          \big) \circ (i,i,i)  \\
        &= p \circ \big(
            \LieDstr{2;1,1}\circ_1((\id - \dEnd h)\LieDstr{2})
            - \LieDstr{2}\circ_2((\id - \dEnd h)\LieDstr{2;1,1})
            + (\LieDstr{2;1,1}\circ_2(\id - \dEnd h)\LieDstr{2})^{(12)}
          \big) \circ (i,i,i)  \\
        &= \LieDstr{2;1,1}[\prime]\circ_1\LieDstr{2}[\prime]
          - \LieDstr{2}[\prime]\circ_2\LieDstr{2;1,1}[\prime]
          + (\LieDstr{2;1,1}[\prime]\circ_2\LieDstr{2}[\prime])^{(12)} ,  \\
      %%%
        \LieDstr{4}[\prime] + \LieDstr{4}[\prime(12)]
        &= p \circ \big( \LieDstr{4} + \LieDstr{4}[(12)] \big) \circ (i,i,i,i)
          - p \circ \left(\begin{aligned}
              &\LieDstr{2}\circ_1(h\circ\LieDstr{3})
              + (\LieDstr{2}\circ_2(h\circ\LieDstr{3}))^{(123)}  \\
              &+ \LieDstr{3}\circ_1(h\circ\LieDstr{2})
              + \LieDstr{3}\circ_3(h\circ\LieDstr{2})
            \end{aligned}\right)^{\mathrlap{1+(12)}} \circ (i,i,i,i)  \\
          &\quad- p \circ \left(\begin{aligned}
              \LieDstr{2}\circ_1(h\circ\LieDstr{2})\circ_1(h\circ\LieDstr{2})
              + \LieDstr{2}\circ(h\circ\LieDstr{2},h\circ\LieDstr{2})  \\
              {}+ (\LieDstr{2}\circ_1(h\circ\LieDstr{2})\circ_1(h\circ\LieDstr{2}))^{(123)}
            \end{aligned}\right)^{\mathrlap{1+(12)}} \circ (i,i,i,i)  \\
        &= p \circ \big(
            \LieDstr{2}\circ_1\LieDstr{3;1}
            + (\LieDstr{2}\circ_2\LieDstr{3;1})^{(123)}
            + \LieDstr{3}\circ_1\LieDstr{2;1}
            - \LieDstr{3;1}\circ_3\LieDstr{2}
          \big) \circ (i,i,i,i)  \\
          &\quad- p \circ \left(\begin{aligned}
              &\LieDstr{2} \circ_1 \big(
                  h \circ \big(\LieDstr{3} + \LieDstr{3}[(12)] + \LieDstr{2} \circ_1 \LieDstr{2;1}\big)
                \big)  \\
              &+ \big(
                  \LieDstr{2} \circ_2 \big(
                      h \circ \big(\LieDstr{3} + \LieDstr{3}[(12)] + \LieDstr{2} \circ_1 \LieDstr{2;1}\big)
                    \big)
                \big)^{(123)}  \\
              &- \big(\LieDstr{2} \circ_1 \LieDstr{2} - (\LieDstr{2} \circ_2 \LieDstr{2})^{1-(12)}\big)
                \circ_1 (h \circ \LieDstr{2;1})
              + \LieDstr{3} \circ_1 \big(h \circ \big(\LieDstr{2} + \LieDstr{2}[(12)]\big)\big)  \\
              &+ \big(\LieDstr{3} + \LieDstr{3}[(12)] + \LieDstr{2} \circ_1 \LieDstr{2;1}\big)
                \circ_3 (h \circ \LieDstr{2})
            \end{aligned}\right) \circ (i,i,i,i)  \\
          &\quad- p \circ \left(\begin{aligned}
              & \LieDstr{2}\circ_1\LieDstr{2}\circ_1(h\circ\LieDstr{2;1})
              - \LieDstr{2}\circ_1(\dEnd h\circ\LieDstr{2})\circ_1(h\circ\LieDstr{2;1})  \\
              &+ \big(\LieDstr{2}\circ_2\LieDstr{2}\circ_2(h\circ\LieDstr{2;1})\big)^{(123)}
              - \big(\LieDstr{2}\circ_2(\dEnd h\circ\LieDstr{2})\circ_2(h\circ\LieDstr{2;1})\big)^{(123)}  \\
              &- \LieDstr{2}\circ_1(h\circ\LieDstr{2})\circ_1\LieDstr{2;1}
              + \LieDstr{2}\circ_1(h\circ\LieDstr{2})\circ_1(\dEnd h\circ\LieDstr{2;1})  \\
              &+ (\LieDstr{2}\circ_2(h\circ\LieDstr{2}))^{1-(12)}\circ_1\LieDstr{2;1}
              - (\LieDstr{2}\circ_2(h\circ\LieDstr{2}))^{1-(12)}\circ_1(\dEnd h\circ\LieDstr{2;1})  \\
              &- \LieDstr{2}\circ_1(h\circ\LieDstr{2;1})\circ_3\LieDstr{2}
              + \LieDstr{2}\circ_1(h\circ\LieDstr{2;1})\circ_3(\dEnd h\circ\LieDstr{2})
            \end{aligned}\right) \circ (i,i,i,i)  \\
        &= p \circ \left(\begin{aligned}
            &\LieDstr{2}\circ_1((\id-\dEnd h)\circ\LieDstr{3;1})
            - \LieDstr{2}\circ_1((\id-\dEnd h)\circ\LieDstr{2})\circ_1(h\circ\LieDstr{2;1})  \\
            &+ (\LieDstr{2}\circ_2((\id-\dEnd h)\circ\LieDstr{3;1}))^{(123)}
            - \big(\LieDstr{2}\circ_2((\id-\dEnd h)\circ\LieDstr{2})\circ_2(h\circ\LieDstr{2;1})\big)^{(123)}  \\
            &+ \LieDstr{3}\circ_1((\id-\dEnd h)\circ\LieDstr{2;1})
            + \LieDstr{2}\circ_1(h\circ\LieDstr{2})\circ_1((\id-\dEnd h)\circ\LieDstr{2;1})  \\
            &- (\LieDstr{2}\circ_2(h\circ\LieDstr{2}))^{1-(12)}\circ_1((\id-\dEnd h)\circ\LieDstr{2;1})  \\
            &- \LieDstr{3;1}\circ_3((\id-\dEnd h)\circ\LieDstr{2})
            + \LieDstr{2}\circ_1(h\circ\LieDstr{2;1})\circ_3((\id-\dEnd h)\circ\LieDstr{2})
          \end{aligned}\right) \circ (i,i,i,i)  \\
        &= p \circ \left(\begin{aligned}
            &\LieDstr{2} \circ_1 (i \circ p
              \circ (\LieDstr{3;1} - \LieDstr{2}\circ_1(h\circ\LieDstr{2;1})) )  \\
            &+ (\LieDstr{2} \circ_2 (i \circ p
              \circ (\LieDstr{3;1} - \LieDstr{2}\circ_1(h\circ\LieDstr{2;1})) ))^{(123)}  \\
            &+ \big(\LieDstr{3} + \LieDstr{2}\circ_1(h\circ\LieDstr{2})
                - (\LieDstr{2}\circ_2(h\circ\LieDstr{2}))^{1-(12)}
              \big) \circ_1 (i \circ p \circ \LieDstr{2;1})  \\
            &- (\LieDstr{3;1} - \LieDstr{2}\circ_1(h\circ\LieDstr{2;1})) \circ_3 (i \circ p \circ \LieDstr{2})
          \end{aligned}\right) \circ (i,i,i,i)  \\
        &= \LieDstr{2}[\prime]\circ_1\LieDstr{3;1}[\prime]
            + (\LieDstr{2}[\prime]\circ_2\LieDstr{3;1}[\prime])^{(123)}
            + \LieDstr{3}[\prime]\circ_1\LieDstr{2;1}[\prime]
            - \LieDstr{3;1}[\prime]\circ_3\LieDstr{2}[\prime] ,  \\
      %%%
        \LieDstr{4}[\prime] + \LieDstr{4}[\prime(23)]
        &= p \circ \big( \LieDstr{4} + \LieDstr{4}[(23)] \big) \circ (i,i,i,i)
          - p \circ \left(\begin{aligned}
              &\LieDstr{2}\circ_1(h\circ\LieDstr{3})
              + \LieDstr{2}\circ_2(h\circ\LieDstr{3})
              + \LieDstr{3}\circ_1(h\circ\LieDstr{2})  \\
              &- (\LieDstr{3}\circ_2(h\circ\LieDstr{2}))^{1-(12)}
              + (\LieDstr{3}\circ_3(h\circ\LieDstr{2}))^{(132)}
            \end{aligned}\right)^{\mathrlap{1+(23)}} \circ (i,i,i,i)  \\
          &\quad- p \circ \left(\begin{aligned}
              &\LieDstr{2}\circ_1(h\circ\LieDstr{2})\circ_1(h\circ\LieDstr{2})
              - (\LieDstr{2}\circ_1(h\circ\LieDstr{2})\circ_2(h\circ\LieDstr{2}))^{1-(12)}  \\
              &+ \LieDstr{2}\circ(h\circ\LieDstr{2},h\circ\LieDstr{2})^{(132)}
              + \LieDstr{2}\circ_2(h\circ\LieDstr{2})\circ_2(h\circ\LieDstr{2})
            \end{aligned}\right)^{\mathrlap{1+(23)}} \circ (i,i,i,i)  \\
        &= p \circ \big(
            \LieDstr{2}\circ_1\LieDstr{3;2}
            + \LieDstr{2}\circ_2\LieDstr{3;1}
            - \LieDstr{3;1}\circ_1\LieDstr{2}
            - \LieDstr{3}\circ_2\LieDstr{2;1}
            - (\LieDstr{3;1}\circ_2\LieDstr{2})^{(12)}
            - (\LieDstr{3;1}\circ_3\LieDstr{2})^{(132)}
          \big) \circ (i,i,i,i)  \\
          &\quad- p \circ \left(\begin{aligned}
              &\LieDstr{2} \circ_1 \big(
                  h \circ \big(
                      \LieDstr{3} + \LieDstr{3}[(23)]
                      + \LieDstr{2;1} \circ_1 \LieDstr{2}
                      - \LieDstr{2} \circ_2 \LieDstr{2;1}
                      + (\LieDstr{2;1} \circ_2 \LieDstr{2})^{(12)}
                    \big)
                \big)  \\
              &+ \LieDstr{2} \circ_2 \big(
                  h \circ \big(\LieDstr{3} + \LieDstr{3}[(12)] + \LieDstr{2} \circ_1 \LieDstr{2;1}\big)
                \big)
              + \big(
                  \LieDstr{3} + \LieDstr{3}[(12)] + \LieDstr{2} \circ_1 \LieDstr{2;1}
                \big) \circ_1 (h \circ \LieDstr{2})  \\
              &+ \big(\LieDstr{2} \circ_1 \LieDstr{2} - (\LieDstr{2} \circ_2 \LieDstr{2})^{1-(12)}\big)
                \circ_2 (h \circ \LieDstr{2;1})
              - \LieDstr{3} \circ_2 \big(h \circ \big(\LieDstr{2} + \LieDstr{2}[(12)]\big)\big)  \\
              &+ \big(\big(\LieDstr{3} + \LieDstr{3}[(12)] + \LieDstr{2} \circ_1 \LieDstr{2;1}\big)
                \circ_2 (h \circ \LieDstr{2}) \big)^{(12)}  \\
              &+ \big(\big(\LieDstr{3} + \LieDstr{3}[(12)] + \LieDstr{2} \circ_1 \LieDstr{2;1}\big)
                \circ_3 (h \circ \LieDstr{2}) \big)^{(132)}
            \end{aligned}\right) \circ (i,i,i,i)  \\
          &\quad+ p \circ \left(\begin{aligned}
              & \LieDstr{2} \circ_1 \big(
                  \LieDstr{2;1} \circ_1 (h\circ\LieDstr{2})
                  + \LieDstr{2} \circ_2 (h\circ\LieDstr{2;1})
                  + (\LieDstr{2;1} \circ_2 (h\circ\LieDstr{2}))^{(12)}
                \big)  \\
              &- \LieDstr{2} \circ_1 \big( \dEnd h \circ \big(
                  \LieDstr{2;1} \circ_1 (h\circ\LieDstr{2})
                  + \LieDstr{2} \circ_2 (h\circ\LieDstr{2;1})
                  + (\LieDstr{2;1} \circ_2 (h\circ\LieDstr{2}))^{(12)}
                \big) \big)  \\
              &- \LieDstr{2}\circ_2(\LieDstr{2}\circ_1(h\circ\LieDstr{2;1}))
              + \LieDstr{2}\circ_2(\dEnd h\circ(\LieDstr{2}\circ_1(h\circ\LieDstr{2;1})))  \\
              &+ (\LieDstr{2}\circ_1(h\circ\LieDstr{2;1})) \circ_1 \LieDstr{2}
              - (\LieDstr{2}\circ_1(h\circ\LieDstr{2;1})) \circ_1 (\dEnd h\circ\LieDstr{2})  \\
              &- \big( \LieDstr{2}\circ_1(h\circ\LieDstr{2})
                - (\LieDstr{2}\circ_2(h\circ\LieDstr{2}))^{1-(12)} \big) \circ_2 \LieDstr{2;1}  \\
              &+ \big( \LieDstr{2}\circ_1(h\circ\LieDstr{2})
                - (\LieDstr{2}\circ_2(h\circ\LieDstr{2}))^{1-(12)} \big) \circ_2 (\dEnd h \circ \LieDstr{2;1})  \\
              &+ ( (\LieDstr{2}\circ_1(h\circ\LieDstr{2;1})) \circ_2 \LieDstr{2} )^{(12)}
              - ( (\LieDstr{2}\circ_1(h\circ\LieDstr{2;1})) \circ_2 (\dEnd h \circ \LieDstr{2}) )^{(12)}  \\
              &+ ((\LieDstr{2}\circ_1(h\circ\LieDstr{2;1}))\circ_3\LieDstr{2})^{(132)}
              - ((\LieDstr{2}\circ_1(h\circ\LieDstr{2;1}))\circ_3(\dEnd h \circ \LieDstr{2}))^{(132)}
            \end{aligned}\right) \circ (i,i,i,i)  \\
        &= p \circ \left(\begin{aligned}
            &\LieDstr{2} \circ_1 ((\id-\dEnd h) \circ \LieDstr{3;2})  \\
              &\quad+ \LieDstr{2} \circ_1 \left((\id-\dEnd h) \circ \left(\begin{aligned}
                  \LieDstr{2;1}\circ_1(h\circ\LieDstr{2})
                  &+ \LieDstr{2}\circ_2(h\circ\LieDstr{2;1})  \\
                  &+ (\LieDstr{2;1}\circ_2(h\circ\LieDstr{2}))^{(12)}
                \end{aligned}\right) \right)  \\
            &+ \LieDstr{2} \circ_2 ((\id-\dEnd h) \circ \LieDstr{3;1})
              - \LieDstr{2} \circ_2 ((\id-\dEnd h) \circ ( \LieDstr{2}\circ_1(h\circ\LieDstr{2;1})) )  \\
            &- \LieDstr{3;1} \circ_1 ((\id-\dEnd h) \circ \LieDstr{2})
              + ( \LieDstr{2}\circ_1(h\circ\LieDstr{2;1})) \circ_1 ((\id-\dEnd h) \circ \LieDstr{2})  \\
            &- \LieDstr{3} \circ_2 ((\id-\dEnd h) \circ \LieDstr{2;1})  \\
              &\quad- \big( \LieDstr{2} \circ_1 (h\circ\LieDstr{2})
                - (\LieDstr{2} \circ_2 (h\circ\LieDstr{2}))^{1-(12)} \big) \circ_2
                ((\id-\dEnd h) \circ \LieDstr{2;1})  \\
            &- (\LieDstr{3;1} \circ_2 ((\id-\dEnd h) \circ \LieDstr{2}))^{(12)}
              + ( ( \LieDstr{2}\circ_1(h\circ\LieDstr{2;1})) \circ_2 ((\id-\dEnd h) \circ \LieDstr{2}) )^{(12)}  \\
            &- (\LieDstr{3;1} \circ_3 ((\id-\dEnd h) \circ \LieDstr{2}))^{(132)}
              + ( ( \LieDstr{2}\circ_1(h\circ\LieDstr{2;1})) \circ_3 ((\id-\dEnd h) \circ \LieDstr{2}) )^{(132)}
          \end{aligned}\right) \circ (i,i,i,i)  \\
        &= \LieDstr{2}[\prime]\circ_1\LieDstr{3;2}[\prime]
            + \LieDstr{2}[\prime]\circ_2\LieDstr{3;1}[\prime]
            - \LieDstr{3;1}[\prime]\circ_1\LieDstr{2}[\prime]
            - \LieDstr{3}[\prime]\circ_2\LieDstr{2;1}[\prime]
            - (\LieDstr{3;1}[\prime]\circ_2\LieDstr{2}[\prime])^{(12)}
            - (\LieDstr{3;1}[\prime]\circ_3\LieDstr{2}[\prime])^{(132)} ,  \\
      %%%
        \LieDstr{4}[\prime] + \LieDstr{4}[\prime(34)]
        &= p \circ \big( \LieDstr{4} + \LieDstr{4}[(34)] \big) \circ (i,i,i,i)  \\
          &\quad- p \circ \left(\begin{aligned}
              &\LieDstr{2}\circ_1(h\circ\LieDstr{3})
              + (\LieDstr{2}\circ_2(h\circ\LieDstr{3}))^{1-(12)+(123)}
              + \LieDstr{3}\circ_1(h\circ\LieDstr{2})  \\
              &- (\LieDstr{3}\circ_2(h\circ\LieDstr{2}))^{1-(12)}
              + (\LieDstr{3}\circ_3(h\circ\LieDstr{2}))^{1-(23)+(132)}
            \end{aligned}\right)^{\mathrlap{1+(34)}} \circ (i,i,i,i)  \\
          &\quad- p \circ \left(\begin{aligned}
              &\LieDstr{2}\circ_1(h\circ\LieDstr{2})\circ_1(h\circ\LieDstr{2})
              - (\LieDstr{2}\circ_1(h\circ\LieDstr{2})\circ_2(h\circ\LieDstr{2}))^{1-(12)}  \\
              &+ \LieDstr{2}\circ(h\circ\LieDstr{2},h\circ\LieDstr{2})^{1-(23)+(132)}
              + (\LieDstr{2}\circ_2(h\circ\LieDstr{2})\circ_2(h\circ\LieDstr{2}))^{1-(12)+(123)}  \\
              &- (\LieDstr{2}\circ_2(h\circ\LieDstr{2})\circ_3(h\circ\LieDstr{2}))^{1-(12)+(123)-(23)+(132)-(13)}
            \end{aligned}\right)^{\mathrlap{1+(34)}} \circ (i,i,i,i)  \\
        &= p \circ \left(\begin{aligned}
            &\LieDstr{2;1}\circ_1\LieDstr{3}
            + (\LieDstr{2}\circ_2\LieDstr{3;2})^{1-(12)}
            + (\LieDstr{2;1}\circ_2\LieDstr{3})^{(123)}
            - \LieDstr{3;2}\circ_1\LieDstr{2}  \\
            &+ (\LieDstr{3;2}\circ_2\LieDstr{2})^{1-(12)}
            + \LieDstr{3}\circ_3\LieDstr{2;1}
            + (\LieDstr{3;2}\circ_3\LieDstr{2})^{(23)-(132)}
          \end{aligned}\right) \circ (i,i,i,i)  \\
          &\quad- p \circ \left(\begin{aligned}
              & \big( \LieDstr{2} + \LieDstr{2}[(12)] \big) \circ_1 (h \circ \LieDstr{3})
              + \LieDstr{2;1} \circ_1 \big(h \circ \big(
                  - \LieDstr{2} \circ_1 \LieDstr{2}
                  + (\LieDstr{2} \circ_2 \LieDstr{2})^{1-(12)}
                \big) \big)  \\
              &+ \big( \LieDstr{2} \circ_2 \big(
                  h \circ \big(
                      \LieDstr{3} + \LieDstr{3}[(23)]
                      + \LieDstr{2;1} \circ_1 \LieDstr{2}
                      - \LieDstr{2} \circ_2 \LieDstr{2;1}
                      + (\LieDstr{2;1} \circ_2 \LieDstr{2})^{(12)}
                    \big)
                \big) \big)^{1-(12)}  \\
              &+ \big( \big( \LieDstr{2} + \LieDstr{2}[(12)] \big) \circ_2 (h \circ \LieDstr{3}) \big)^{(123)}  \\
              &\quad+ \big( \LieDstr{2;1} \circ_2 \big( h \circ \big(
                  - \LieDstr{2} \circ_1 \LieDstr{2}
                  + (\LieDstr{2} \circ_2 \LieDstr{2})^{1-(12)}
                \big) \big) \big)^{(123)}  \\
              &+ \big( \LieDstr{3} + \LieDstr{3}[(23)]
                  + \LieDstr{2;1} \circ_1 \LieDstr{2}
                  - \LieDstr{2} \circ_2 \LieDstr{2;1}
                  + (\LieDstr{2;1} \circ_2 \LieDstr{2})^{(12)}
                \big) \circ_1 (h \circ \LieDstr{2})  \\
              &- \big( \big( \LieDstr{3} + \LieDstr{3}[(23)]
                  + \LieDstr{2;1} \circ_1 \LieDstr{2}
                  - \LieDstr{2} \circ_2 \LieDstr{2;1}
                  + (\LieDstr{2;1} \circ_2 \LieDstr{2})^{(12)}
                \big) \circ_2 (h \circ \LieDstr{2}) \big)^{1-(12)}  \\
              &+ \big( - \LieDstr{2} \circ_1 \LieDstr{2} + (\LieDstr{2} \circ_2 \LieDstr{2})^{1-(12)} \big)
                \circ_3 (h \circ \LieDstr{2;1})
              + \LieDstr{3} \circ_3 \big( h \circ \big( \LieDstr{2} + \LieDstr{2}[(12)] \big) \big)  \\
              &- \left( \left(\begin{aligned}
                  \LieDstr{3} + \LieDstr{3}[(23)]
                  + \LieDstr{2;1} \circ_1 \LieDstr{2}
                  &- \LieDstr{2} \circ_2 \LieDstr{2;1}  \\
                  &+ (\LieDstr{2;1} \circ_2 \LieDstr{2})^{(12)}
                \end{aligned}\right) \circ_3 (h \circ \LieDstr{2}) \right)^{(23)-(132)}  \\
            \end{aligned}\right) \circ (i,i,i,i)  \\
          &\quad+ p \circ \left(\begin{aligned}
              &- \LieDstr{2;1} \circ_1 \big(
                  - \LieDstr{2} \circ_1 (h \circ \LieDstr{2})
                  + (\LieDstr{2} \circ_2 (h \circ \LieDstr{2}))^{1-(12)}
                \big)  \\
              &\quad+ \LieDstr{2;1} \circ_1 \big( \dEnd h \circ \big(
                  - \LieDstr{2} \circ_1 (h \circ \LieDstr{2})
                  + (\LieDstr{2} \circ_2 (h \circ \LieDstr{2}))^{1-(12)}
                \big) \big)  \\
              &+ \big( \LieDstr{2} \circ_2 \big(
                  \LieDstr{2;1} \circ_1 (h\circ\LieDstr{2})
                  + \LieDstr{2} \circ_2 (h\circ\LieDstr{2;1})
                  + (\LieDstr{2;1} \circ_2 (h\circ\LieDstr{2}))^{(12)}
                \big) \big)^{1-(12)}  \\
              &\quad- \left( \LieDstr{2} \circ_2 \left( \dEnd h \circ \left(\begin{aligned}
                  \LieDstr{2;1} \circ_1 (h\circ\LieDstr{2})
                  &+ \LieDstr{2} \circ_2 (h\circ\LieDstr{2;1})  \\
                  &+ (\LieDstr{2;1} \circ_2 (h\circ\LieDstr{2}))^{(12)}
                \end{aligned}\right) \right) \right)^{1-(12)}  \\
              &- \big( \LieDstr{2;1} \circ_2 \big(
                  - \LieDstr{2} \circ_1 (h \circ \LieDstr{2})
                  + (\LieDstr{2} \circ_2 (h \circ \LieDstr{2}))^{1-(12)}
                \big) \big)^{(123)}  \\
              &\quad+ \big( \LieDstr{2;1} \circ_2 \big( \dEnd h \circ \big(
                  - \LieDstr{2} \circ_1 (h \circ \LieDstr{2})
                  + (\LieDstr{2} \circ_2 (h \circ \LieDstr{2}))^{1-(12)}
                \big) \big) \big)^{(123)}  \\
              &- \big( \LieDstr{2;1} \circ_1 (h\circ\LieDstr{2})
                  + \LieDstr{2} \circ_2 (h\circ\LieDstr{2;1})
                  + (\LieDstr{2;1} \circ_2 (h\circ\LieDstr{2}))^{(12)}
                \big) \circ_1 \LieDstr{2}  \\
              &\quad+ \big( \LieDstr{2;1} \circ_1 (h\circ\LieDstr{2})
                  + \LieDstr{2} \circ_2 (h\circ\LieDstr{2;1})
                  + (\LieDstr{2;1} \circ_2 (h\circ\LieDstr{2}))^{(12)}
                \big) \circ_1 (\dEnd h \circ \LieDstr{2})  \\
              &+ \big( \big( \LieDstr{2;1} \circ_1 (h\circ\LieDstr{2})
                  + \LieDstr{2} \circ_2 (h\circ\LieDstr{2;1})
                  + (\LieDstr{2;1} \circ_2 (h\circ\LieDstr{2}))^{(12)}
                \big) \circ_2 \LieDstr{2} \big)^{1-(12)}  \\
              &\quad- \left( \left(\begin{aligned}
                  \LieDstr{2;1} \circ_1 (h\circ\LieDstr{2})
                  &+ \LieDstr{2} \circ_2 (h\circ\LieDstr{2;1})  \\
                  &+ (\LieDstr{2;1} \circ_2 (h\circ\LieDstr{2}))^{(12)}
                \end{aligned}\right) \circ_2 (\dEnd h \circ \LieDstr{2}) \right)^{1-(12)}  \\
              &- \big( - \LieDstr{2}\circ_1(h\circ\LieDstr{2})
                + (\LieDstr{2}\circ_2(h\circ\LieDstr{2}))^{1-(12)} \big) \circ_3 \LieDstr{2;1}  \\
              &\quad+ \big( - \LieDstr{2}\circ_1(h\circ\LieDstr{2})
                + (\LieDstr{2}\circ_2(h\circ\LieDstr{2}))^{1-(12)} \big) \circ_3 (\dEnd h \circ \LieDstr{2;1})  \\
              &+ \big( \big( \LieDstr{2;1} \circ_1 (h\circ\LieDstr{2})
                  + \LieDstr{2} \circ_2 (h\circ\LieDstr{2;1})
                  + (\LieDstr{2;1} \circ_2 (h\circ\LieDstr{2}))^{(12)}
                \big) \circ_3 \LieDstr{2} \big)^{(23)-(132)}  \\
              &\quad- \left( \left(\begin{aligned}
                  \LieDstr{2;1} \circ_1 (h\circ\LieDstr{2})
                  &+ \LieDstr{2} \circ_2 (h\circ\LieDstr{2;1})  \\
                  &+ (\LieDstr{2;1} \circ_2 (h\circ\LieDstr{2}))^{(12)}
                \end{aligned}\right) \circ_3 (\dEnd h \circ \LieDstr{2}) \right)^{(23)-(132)}
            \end{aligned}\right) \circ (i,i,i,i)  \\
        &= p \circ \left(\begin{aligned}
            &\LieDstr{2;1} \circ_1 ((\id-\dEnd h) \circ \LieDstr{3})  \\
              &\quad- \LieDstr{2;1} \circ_1 \big((\id-\dEnd h) \circ \big(
                  - \LieDstr{2}\circ_1(h\circ\LieDstr{2})
                  + (\LieDstr{2}\circ_2(h\circ\LieDstr{2}))^{1-(12)}
                \big) \big)  \\
            &+ ( \LieDstr{2} \circ_2 ((\id-\dEnd h) \circ \LieDstr{3;2}))^{1-(12)}  \\
              &\quad+ \left( \LieDstr{2} \circ_2 \left((\id-\dEnd h) \circ \left(\begin{aligned}
                  \LieDstr{2;1}\circ_1(h\circ\LieDstr{2})
                  &+ \LieDstr{2}\circ_2(h\circ\LieDstr{2;1})  \\
                  &+ (\LieDstr{2;1}\circ_2(h\circ\LieDstr{2}))^{(12)}
                \end{aligned}\right) \right) \right)^{1-(12)}  \\
            &+ ( \LieDstr{2;1} \circ_2 ((\id-\dEnd h) \circ \LieDstr{3}) )^{(123)}  \\
              &\quad- \big( \LieDstr{2;1} \circ_2 \big((\id-\dEnd h) \circ \big(
                  - \LieDstr{2}\circ_1(h\circ\LieDstr{2})
                  + (\LieDstr{2}\circ_2(h\circ\LieDstr{2}))^{1-(12)}
                \big) \big) \big)^{(123)}  \\
            &- \LieDstr{3;2} \circ_1 ((\id-\dEnd h) \circ \LieDstr{2})  \\
              &\quad- \left(\begin{aligned}
                  \LieDstr{2;1}\circ_1(h\circ\LieDstr{2})
                  &+ \LieDstr{2}\circ_2(h\circ\LieDstr{2;1})  \\
                  &+ (\LieDstr{2;1}\circ_2(h\circ\LieDstr{2}))^{(12)}
                \end{aligned}\right) \circ_1 ((\id-\dEnd h) \circ \LieDstr{2})  \\
            &+ (\LieDstr{3;2} \circ_2 ((\id-\dEnd h) \circ \LieDstr{2}))^{1-(12)}  \\
              &\quad+ \left( \left(\begin{aligned}
                  \LieDstr{2;1}\circ_1(h\circ\LieDstr{2})
                  &+ \LieDstr{2}\circ_2(h\circ\LieDstr{2;1})  \\
                  &+ (\LieDstr{2;1}\circ_2(h\circ\LieDstr{2}))^{(12)}
                \end{aligned}\right) \circ_2 ((\id-\dEnd h) \circ \LieDstr{2}) \right)^{1-(12)}  \\
            &+ \LieDstr{3} \circ_3 ((\id-\dEnd h) \circ \LieDstr{2;1})  \\
              &\quad- \big( - \LieDstr{2} \circ_1 (h\circ\LieDstr{2})
                + (\LieDstr{2} \circ_2 (h\circ\LieDstr{2}))^{1-(12)} \big) \circ_3
                ((\id-\dEnd h) \circ \LieDstr{2;1})  \\
            &+ (\LieDstr{3;2} \circ_3 ((\id-\dEnd h) \circ \LieDstr{2}))^{(23)-(132)}  \\
              &\quad+ \left( \left(\begin{aligned}
                  \LieDstr{2;1}\circ_1(h\circ\LieDstr{2})
                  &+ \LieDstr{2}\circ_2(h\circ\LieDstr{2;1})  \\
                  &+ (\LieDstr{2;1}\circ_2(h\circ\LieDstr{2}))^{(12)}
                \end{aligned}\right) \circ_3 ((\id-\dEnd h) \circ \LieDstr{2}) \right)^{(23)-(132)}
          \end{aligned}\right) \circ (i,i,i,i)  \\
        \begin{split}
          &= \LieDstr{2;1}[\prime]\circ_1\LieDstr{3}[\prime]
          + (\LieDstr{2}[\prime]\circ_2\LieDstr{3;2}[\prime])^{1-(12)}
          + (\LieDstr{2;1}[\prime]\circ_2\LieDstr{3}[\prime])^{(123)}
          - \LieDstr{3;2}[\prime]\circ_1\LieDstr{2}[\prime]  \\
          &\quad+ (\LieDstr{3;2}[\prime]\circ_2\LieDstr{2}[\prime])^{1-(12)}
          + \LieDstr{3}[\prime]\circ_3\LieDstr{2;1}[\prime]
          + (\LieDstr{3;2}[\prime]\circ_3\LieDstr{2}[\prime])^{(23)-(132)}  .
        \end{split}
      \end{align*}
      This proves Lemma \ref{lm:htt:str}.

    \subsection{Proof of Lemma \ref{df:SS:mor}} \label{A:comp:SSmor}
      Since the components of $\SSy{f}$ are clearly skew-symmetric, the proof boils down to checking
      \cref{eq:Leib3:f1,eq:Leib3:f2,eq:Leib3:f3,eq:Leib3:f4}.  The first three are easily verified:
      \begin{align*}
        \dEnd(\SSy{f}_1) &= \partial f_1 = 0 ,  \\
        \dEnd(\SSy{f}_2) &= \frac{1}{2} \dEnd\left(f_2 - f_2^{(12)}\right)  \\
          &= \frac{1}{2} \left(
              f_1\circ\LieDstr{2} - \LieDstr{2}[\prime]\circ(f_1,f_1)
              - f_1\circ\LieDstr{2}[(12)] + \LieDstr{2}[\prime]\circ(f_1,f_1)^{(12)}
            \right)  \\
          &= \SSy{f}_1\circ\LieKstr{2} - \LieKstr{2}[\prime]\circ(\SSy{f}_1,\SSy{f}_1) ,  \\
        \dEnd(\SSy{f}_3) &= \frac{1}{6} \sum_{\sigma\in\Sy_3} (-1)^{|\sigma|} \cdot \dEnd(f_3)^\sigma
              - \frac{1}{24} \sum_{\sigma\in\Sy_3} (-1)^{|\sigma|} \cdot
                \left(\begin{aligned}
                    \dEnd(f_{2;1}) \circ_1 \LieDstr{2}
                    - \dEnd(\LieDstr{2;1}[\prime]) \circ (f_2,f_1)
                    + \LieDstr{2;1}[\prime] \circ (\dEnd(f_2),f_1)  \\
                    {}+{} \dEnd(f_{2;1}) \circ_2 \LieDstr{2}
                    - \dEnd(\LieDstr{2;1}[\prime]) \circ (f_1,f_2)
                    + \LieDstr{2;1}[\prime] \circ (f_1,\dEnd(f_2))
                  \end{aligned}\right)^\sigma  \\
          &= \frac{1}{6} \sum_{\sigma\in\Sy_3} (-1)^{|\sigma|} \cdot \left(\begin{aligned}
                &f_1\circ\LieDstr{3}
                + f_2\circ_1\LieDstr{2} - f_2\circ_2\LieDstr{2} + (f_2\circ_2\LieDstr{2})^{(12)}  \\
                &+ \LieDstr{2}[\prime]\circ(f_2,f_1) - \LieDstr{2}[\prime]\circ(f_1,f_2)
                + \LieDstr{2}[\prime]\circ(f_1,f_2)^{(12)} - \LieDstr{3}[\prime]\circ(f_1,f_1,f_1)
              \end{aligned}\right)^\sigma  \\
              &\quad- \frac{1}{24} \sum_{\sigma\in\Sy_3} (-1)^{|\sigma|} \cdot
                \left(\begin{aligned}
                    &\phantom{{}+{}}(- f_2 - f_2^{(12)}
                        + f_1\circ\LieDstr{2;1} - \LieDstr{2;1}[\prime]\circ(f_1,f_1)
                      ) \circ_1 \LieDstr{2}  \\
                    &+ (- f_2 - f_2^{(12)} + f_1\circ\LieDstr{2;1} - \LieDstr{2;1}[\prime]\circ(f_1,f_1))
                      \circ_2 \LieDstr{2}  \\
                    &- (\LieDstr{2}[\prime] + \LieDstr{2}[\prime(12)]) \circ (f_2,f_1)
                    + \LieDstr{2;1}[\prime] \circ (f_1\circ\LieDstr{2} - \LieDstr{2}[\prime]\circ(f_1,f_1),f_1)  \\
                    &- (\LieDstr{2}[\prime] + \LieDstr{2}[\prime(12)]) \circ (f_1,f_2)
                    + \LieDstr{2;1}[\prime] \circ (f_1,f_1\circ\LieDstr{2} - \LieDstr{2}[\prime]\circ(f_1,f_1))
                  \end{aligned}\right)^\sigma  \\
          &= \SSy{f}_1\circ\LieKstr{3} - \LieKstr{3}[\prime]\circ(\SSy{f}_1,\SSy{f}_1,\SSy{f}_1)  \\
            &\quad+ \frac{1}{12} \sum_{\sigma\in\Sy_3} (-1)^{|\sigma|} \cdot \left(
                2 f_2\circ_1\LieDstr{2} - 4 f_2\circ_2\LieDstr{2}
              \right)^\sigma
              + \frac{1}{24} \sum_{\sigma\in\Sy_3} (-1)^{|\sigma|} \cdot
                \left(\begin{aligned}
                    \big( f_2 + f_2^{(12)} \big) \circ_1 \LieDstr{2}  \\
                    + \big( f_2 + f_2^{(12)} \big) \circ_2 \LieDstr{2}
                  \end{aligned}\right)^\sigma  \\
            &\quad+ \frac{1}{12} \sum_{\sigma\in\Sy_3} (-1)^{|\sigma|} \cdot \left(
                2 \LieDstr{2}[\prime]\circ(f_2,f_1) - 4 \LieDstr{2}[\prime]\circ(f_1,f_2)
              \right)^\sigma
              + \frac{1}{24} \sum_{\sigma\in\Sy_3} (-1)^{|\sigma|} \cdot
                \left(\begin{aligned}
                    \big(\LieDstr{2}[\prime] + \LieDstr{2}[\prime(12)]\big) \circ (f_2,f_1)  \\
                    + \big(\LieDstr{2}[\prime] + \LieDstr{2}[\prime(12)]\big) \circ (f_1,f_2)
                  \end{aligned}\right)^\sigma  \\
            \begin{split}
              &= \SSy{f}_1\circ\LieKstr{3} + \SSy{f}_2\circ_1\LieKstr{2}
                - \SSy{f}_2\circ_2\LieKstr{2} + (\SSy{f}_2\circ_2\LieKstr{2})^{(12)}  \\
                &\quad+ \LieKstr{2}[\prime]\circ(\SSy{f}_2,\SSy{f}_1)
                - \LieKstr{2}[\prime]\circ(\SSy{f}_1,\SSy{f}_2)
                + \LieKstr{2}[\prime]\circ(\SSy{f}_1,\SSy{f}_2)^{(12)}
                - \LieKstr{3}[\prime]\circ(\SSy{f}_1,\SSy{f}_1,\SSy{f}_1)  .
            \end{split}
      \end{align*}
      A trick to showing \cref{eq:Leib3:f4} is to define
      \begin{align*}
        \SSy{f}_4 &\defeq \frac{1}{24} \sum_{\sigma\in\Sy_4} (-1)^{|\sigma|} \cdot f_4^\sigma
            + \frac{1}{48} \sum_{\sigma\in\Sy_4} (-1)^{|\sigma|} \cdot
              \left(\begin{aligned}
                  &f_{2;1} \circ_1 \LieDstr{3} - f_{2;1} \circ_2 \LieDstr{3}
                  - \LieDstr{2;1}[\prime] \circ (f_3,f_1) + \LieDstr{2;1}[\prime] \circ (f_1,f_3)  \\
                  &- f_{3;1} \circ_1 \LieDstr{2} - f_{3;1} \circ_2 \LieDstr{2}
                  - \LieDstr{3;1}[\prime] \circ (f_2,f_1,f_1) - \LieDstr{3;1}[\prime] \circ (f_1,f_2,f_1)  \\
                  &+ f_{3;2} \circ_2 \LieDstr{2} + f_{3;2} \circ_3 \LieDstr{2}
                  + \LieDstr{3;2}[\prime] \circ (f_1,f_2,f_1) + \LieDstr{3;2}[\prime] \circ (f_1,f_1,f_2)
                \end{aligned}\right)^\sigma ,
      \end{align*}
      which for degree reasons must vanish, and verify
      \begin{align*}
        0 = \dEnd(\SSy{f}_4) &= \frac{1}{24} \sum_{\sigma\in\Sy_4} (-1)^{|\sigma|} \cdot \dEnd(f_4)^\sigma  \\
            &\quad+ \frac{1}{48} \sum_{\sigma\in\Sy_4} (-1)^{|\sigma|} \cdot
              \left(\begin{aligned}
                  &\phantom{{}-{}}\dEnd(f_{2;1})\circ_1\LieDstr{3} + f_{2;1}\circ_1\dEnd(\LieDstr{3})
                  - \dEnd(\LieDstr{2;1}[\prime])\circ(f_3,f_1) + \LieDstr{2;1}[\prime]\circ(\dEnd(f_3),f_1)  \\
                  &- \dEnd(f_{2;1})\circ_2\LieDstr{3} - f_{2;1}\circ_2\dEnd(\LieDstr{3})
                  + \dEnd(\LieDstr{2;1}[\prime])\circ(f_1,f_3) - \LieDstr{2;1}[\prime]\circ(f_1,\dEnd(f_3))  \\
                  &- \dEnd(f_{3;1})\circ_1\LieDstr{2}
                  - \dEnd(\LieDstr{3;1}[\prime])\circ(f_2,f_1,f_1)
                  - \LieDstr{3;1}[\prime]\circ(\dEnd(f_2),f_1,f_1) \\
                  &- \dEnd(f_{3;1})\circ_2\LieDstr{2}
                  - \dEnd(\LieDstr{3;1}[\prime])\circ(f_1,f_2,f_1)
                  - \LieDstr{3;1}[\prime]\circ(f_1,\dEnd(f_2),f_1) \\
                  &+ \dEnd(f_{3;2})\circ_2\LieDstr{2}
                  + \dEnd(\LieDstr{3;2}[\prime])\circ(f_1,f_2,f_1)
                  + \LieDstr{3;2}[\prime]\circ(f_1,\dEnd(f_2),f_1) \\
                  &+ \dEnd(f_{3;2})\circ_3\LieDstr{2}
                  + \dEnd(\LieDstr{3;2}[\prime])\circ(f_1,f_1,f_2)
                  + \LieDstr{3;2}[\prime]\circ(f_1,f_1,\dEnd(f_2))
                \end{aligned}\right)^\sigma  \\
        &= \frac{1}{24} \sum_{\sigma\in\Sy_4} (-1)^{|\sigma|} \cdot \left(
              f_1\circ\LieDstr{4} - \LieDstr{4}[\prime]\circ(f_1,f_1,f_1,f_1)
            \right)^\sigma  \\
          &\quad- \frac{1}{24} \sum_{\sigma\in\Sy_4} (-1)^{|\sigma|} \cdot \left(
              f_2\circ_1\LieDstr{3} + 3 f_2\circ_2\LieDstr{3}
            \right)^\sigma  \\
          &\qquad+ \frac{1}{48} \sum_{\sigma\in\Sy_4} (-1)^{|\sigma|} \cdot \left(\begin{aligned}
              \big(- f_2 - f_2^{(12)} + f_1\circ\LieDstr{2;1} - \LieDstr{2;1}[\prime]\circ(f_1,f_1)\big)
                \circ_1 \LieDstr{3}  \\
              - \big(- f_2 - f_2^{(12)} + f_1\circ\LieDstr{2;1} - \LieDstr{2;1}[\prime]\circ(f_1,f_1)\big)
                \circ_2 \LieDstr{3}
            \end{aligned}\right)^\sigma  \\
          &\quad+ \frac{1}{24} \sum_{\sigma\in\Sy_4} (-1)^{|\sigma|} \cdot \left(
              f_3\circ_1\LieDstr{2} - 2 f_3\circ_2\LieDstr{2} + 3 f_3\circ_3\LieDstr{2}
            \right)^\sigma  \\
          &\qquad- \frac{1}{48} \sum_{\sigma\in\Sy_4} (-1)^{|\sigma|} \cdot \left(\begin{aligned}
              \left(\begin{aligned}
                  - f_3 - f_3^{(12)} + f_1\circ\LieDstr{3;1} - \LieDstr{3;1}[\prime]\circ(f_1,f_1,f_1)  \\
                  - f_2\circ_1\LieDstr{2;1} - \LieDstr{2}[\prime]\circ(f_{2;1},f_1)
                \end{aligned}\right) \circ_1 \LieDstr{2}  \\
              - \left(\begin{aligned}
                  - f_3 - f_3^{(12)} + f_1\circ\LieDstr{3;1} - \LieDstr{3;1}[\prime]\circ(f_1,f_1,f_1)  \\
                  - f_2\circ_1\LieDstr{2;1} - \LieDstr{2}[\prime]\circ(f_{2;1},f_1)
                \end{aligned}\right) \circ_2 \LieDstr{2}
            \end{aligned}\right)^\sigma  \\
          &\qquad+ \frac{1}{48} \sum_{\sigma\in\Sy_4} (-1)^{|\sigma|} \cdot \left(\begin{aligned}
              \left(\begin{aligned}
                  &- f_3 - f_3^{(23)} + f_1\circ\LieDstr{3;2} - \LieDstr{3;2}[\prime]\circ(f_1,f_1,f_1)  \\
                  &- f_{2;1}\circ_1\LieDstr{2} + f_2\circ_2\LieDstr{2;1} - (f_{2;1}\circ_2\LieDstr{2})^{(12)}  \\
                  &+ \LieDstr{2;1}[\prime]\circ(f_2,f_1) + \LieDstr{2}[\prime]\circ(f_1,f_{2;1})
                    + \LieDstr{2;1}[\prime]\circ(f_1,f_2)^{(12)}
                \end{aligned}\right) \circ_2 \LieDstr{2}  \\
              - \left(\begin{aligned}
                  &- f_3 - f_3^{(23)} + f_1\circ\LieDstr{3;2} - \LieDstr{3;2}[\prime]\circ(f_1,f_1,f_1)  \\
                  &- f_{2;1}\circ_1\LieDstr{2} + f_2\circ_2\LieDstr{2;1} - (f_{2;1}\circ_2\LieDstr{2})^{(12)}  \\
                  &+ \LieDstr{2;1}[\prime]\circ(f_2,f_1) + \LieDstr{2}[\prime]\circ(f_1,f_{2;1})
                    + \LieDstr{2;1}[\prime]\circ(f_1,f_2)^{(12)}
                \end{aligned}\right) \circ_3 \LieDstr{2}
            \end{aligned}\right)^\sigma  \\
          &\quad- \frac{1}{24} \sum_{\sigma\in\Sy_4} (-1)^{|\sigma|} \cdot \left(
              \LieDstr{2}[\prime]\circ(f_3,f_1) + \LieDstr{2}[\prime]\circ(f_1,f_3)
            \right)^\sigma  \\
          &\qquad- \frac{1}{48} \sum_{\sigma\in\Sy_4} (-1)^{|\sigma|} \cdot \left(
              \big(\LieDstr{2}[\prime] + \LieDstr{2}[\prime(12)]\big)\circ(f_3,f_1)
              - \big(\LieDstr{2}[\prime] + \LieDstr{2}[\prime(12)]\big)\circ(f_1,f_3)
            \right)^\sigma  \\
          &\quad- \frac{1}{24} \sum_{\sigma\in\Sy_4} (-1)^{|\sigma|} \cdot \left(
              \LieDstr{3}[\prime]\circ(f_2,f_1,f_1) - 2 \LieDstr{3}[\prime]\circ(f_1,f_2,f_1)
              + 3 \LieDstr{3}[\prime]\circ(f_1,f_1,f_2)
            \right)^\sigma  \\
          &\qquad- \frac{1}{48} \sum_{\sigma\in\Sy_4} (-1)^{|\sigma|} \cdot \left(\begin{aligned}
              \big(\LieDstr{3}[\prime] + \LieDstr{3}[\prime(12)]
                  + \LieDstr{2}[\prime]\circ_1\LieDstr{2;1}[\prime]
                \big)\circ(f_2,f_1,f_1)  \\
              + \big(\LieDstr{3}[\prime] + \LieDstr{3}[\prime(12)]
                  + \LieDstr{2}[\prime]\circ_1\LieDstr{2;1}[\prime]
                \big)\circ(f_1,f_2,f_1)
            \end{aligned}\right)^\sigma  \\
          &\qquad+ \frac{1}{48} \sum_{\sigma\in\Sy_4} (-1)^{|\sigma|} \cdot \left(\begin{aligned}
              \big(\LieDstr{3}[\prime] + \LieDstr{3}[\prime(23)]
                  + \LieDstr{2;1}[\prime]\circ_1\LieDstr{2}[\prime]
                  - \LieDstr{2}[\prime]\circ_2\LieDstr{2;1}[\prime]
                  + (\LieDstr{2;1}[\prime]\circ_2\LieDstr{2}[\prime])^{(12)}
                \big)\circ(f_1,f_2,f_1)  \\
              + \big(\LieDstr{3}[\prime] + \LieDstr{3}[\prime(23)]
                  + \LieDstr{2;1}[\prime]\circ_1\LieDstr{2}[\prime]
                  - \LieDstr{2}[\prime]\circ_2\LieDstr{2;1}[\prime]
                  + (\LieDstr{2;1}[\prime]\circ_2\LieDstr{2}[\prime])^{(12)}
                \big)\circ(f_1,f_1,f_2)
            \end{aligned}\right)^\sigma  \\
          &\quad+ \frac{1}{24} \sum_{\sigma\in\Sy_4} (-1)^{|\sigma|} \cdot \left(
              3 \LieDstr{2}[\prime]\circ(f_2,f_2)
            \right)^\sigma  \\
          &\quad+ \frac{1}{48} \sum_{\sigma\in\Sy_4} (-1)^{|\sigma|} \cdot \left(\begin{aligned}
              f_{2;1}\circ_1\big(
                  - \LieDstr{2}[\prime]\circ_1\LieDstr{2}[\prime]
                  + 2 \LieDstr{2}[\prime]\circ_2\LieDstr{2}[\prime]
                \big)  \\
              - f_{2;1}\circ_2\big(
                  - \LieDstr{2}[\prime]\circ_1\LieDstr{2}[\prime]
                  + 2 \LieDstr{2}[\prime]\circ_2\LieDstr{2}[\prime]
                \big)
            \end{aligned}\right)^\sigma  \\
          &\quad+ \frac{1}{48} \sum_{\sigma\in\Sy_4} (-1)^{|\sigma|} \cdot \left(\begin{aligned}
              \LieDstr{2;1}[\prime]\circ\left(
                  \left(\begin{aligned}
                    &f_1\circ\LieDstr{3} + f_2\circ_1\LieDstr{2} - 2 f_2\circ_2\LieDstr{2}  \\
                    &+ \LieDstr{2}[\prime]\circ(f_2,f_1) - 2 \LieDstr{2}[\prime]\circ(f_1,f_2)
                    - \LieDstr{3}[\prime]\circ(f_1,f_1,f_1)
                  \end{aligned}\right)
                  , f_1
                \right)  \\
              - \LieDstr{2;1}[\prime]\circ\left(
                  f_1,
                  \left(\begin{aligned}
                    &f_1\circ\LieDstr{3} + f_2\circ_1\LieDstr{2} - 2 f_2\circ_2\LieDstr{2}  \\
                    &+ \LieDstr{2}[\prime]\circ(f_2,f_1) - 2 \LieDstr{2}[\prime]\circ(f_1,f_2)
                    - \LieDstr{3}[\prime]\circ(f_1,f_1,f_1)
                  \end{aligned}\right)
                \right)
            \end{aligned}\right)^\sigma  \\
          &\quad- \frac{1}{48} \sum_{\sigma\in\Sy_4} (-1)^{|\sigma|} \cdot \left(\begin{aligned}
              \LieDstr{3;1}[\prime]\circ\big(
                  f_1\circ\LieDstr{2} - \LieDstr{2}[\prime]\circ(f_1,f_1), f_1, f_1
                \big)
              + \LieDstr{3;1}[\prime]\circ\big(
                  f_1, f_1\circ\LieDstr{2} - \LieDstr{2}[\prime]\circ(f_1,f_1), f_1
                \big)
            \end{aligned}\right)^\sigma  \\
          &\quad+ \frac{1}{48} \sum_{\sigma\in\Sy_4} (-1)^{|\sigma|} \cdot \left(\begin{aligned}
              \LieDstr{3;2}[\prime]\circ\big(
                  f_1, f_1\circ\LieDstr{2} - \LieDstr{2}[\prime]\circ(f_1,f_1), f_1
                \big)
              + \LieDstr{3;2}[\prime]\circ\big(
                  f_1, f_1, f_1\circ\LieDstr{2} - \LieDstr{2}[\prime]\circ(f_1,f_1)
                \big)
            \end{aligned}\right)^\sigma  \\
        \begin{split}
          &= \SSy{f}_1\circ\LieKstr{4}
            - \SSy{f}_2\circ_1\LieKstr{3} - \SSy{f}_2\circ_2\LieKstr{3} + (\SSy{f}_2\circ_2\LieKstr{3})^{(12)}
            - (\SSy{f}_2\circ_2\LieKstr{3})^{(123)} + \SSy{f}_3\circ_1\LieKstr{2} - \SSy{f}_3\circ_2\LieKstr{2}
            + (\SSy{f}_3\circ_2\LieKstr{2})^{(12)}  \\
            &\quad+ \SSy{f}_3\circ_3\LieKstr{2} - (\SSy{f}_3\circ_3\LieKstr{2})^{(23)}
            + (\SSy{f}_3\circ_3\LieKstr{2})^{(132)} - \LieKstr{2}[\prime]\circ(\SSy{f}_3,\SSy{f}_1)
            - \LieKstr{2}[\prime]\circ(\SSy{f}_1,\SSy{f}_3)
            + \LieKstr{2}[\prime]\circ(\SSy{f}_1,\SSy{f}_3)^{(12)}  \\
            &\quad- \LieKstr{2}[\prime]\circ(\SSy{f}_1,\SSy{f}_3)^{(123)}
            + \LieKstr{2}[\prime]\circ(\SSy{f}_2,\SSy{f}_2)
            - \LieKstr{2}[\prime]\circ(\SSy{f}_2,\SSy{f}_2)^{(23)}
            + \LieKstr{2}[\prime]\circ(\SSy{f}_2,\SSy{f}_2)^{(132)}  \\
            &\quad- \LieKstr{3}[\prime]\circ(\SSy{f}_2,\SSy{f}_1,\SSy{f}_1)
            + \LieKstr{3}[\prime]\circ(\SSy{f}_1,\SSy{f}_2,\SSy{f}_1)
            - \LieKstr{3}[\prime]\circ(\SSy{f}_1,\SSy{f}_2,\SSy{f}_1)^{(12)}
            - \LieKstr{3}[\prime]\circ(\SSy{f}_1,\SSy{f}_1,\SSy{f}_2)  \\
            &\quad+ \LieKstr{3}[\prime]\circ(\SSy{f}_1,\SSy{f}_1,\SSy{f}_2)^{(23)}
            - \LieKstr{3}[\prime]\circ(\SSy{f}_1,\SSy{f}_1,\SSy{f}_2)^{(132)}
            - \LieKstr{4}[\prime]\circ(\SSy{f}_1,\SSy{f}_1,\SSy{f}_1,\SSy{f}_1) .
        \end{split}
      \end{align*}
      This concludes the proof of \Cref{df:SS:mor}.

  %%% END Content

  %\newpage
  \printbibliography

	%\addresseshere

  %\newpage
  %\listoffixmes

\end{document}

% vi: tw=118 ts=2 sw=2 et ai fo-=l: